\documentclass[11pt]{amsart}

\usepackage{amscd,amssymb,amsmath,latexsym,enumerate}
\usepackage[utf8]{inputenc}
\usepackage{amscd,amssymb,amsmath,amsthm,bbm}
\usepackage[mathscr]{euscript}
\usepackage{mathrsfs}

\usepackage[breaklinks=true,colorlinks=true,
linkcolor=black,urlcolor=black,citecolor=black,
bookmarks=true,bookmarksopenlevel=2]{hyperref}

\textheight 21.5truecm
\textwidth 16truecm
\oddsidemargin -0.5truecm
\evensidemargin 0truecm
\topmargin 3cm
\topskip 0cm
\voffset -1.5cm

\DeclareMathAlphabet{\mathpzc}{OT1}{pzc}{m}{it}

\setcounter{section}{-1}

\newtheorem{definition}{Definition}[section]
\newtheorem{proposition}[definition]{Proposition}
\newtheorem{lemma}[definition]{Lemma}
\newtheorem{theo}[definition]{Theorem}

\newtheorem{corollary}[definition]{Corollary}
\newtheorem{remark}[definition]{Remark}

\newtheorem{cor}{Corollary}

\newtheorem{theorem}{Theorem}

\newtheorem{theorema}{Theorem}[section]

%

\theoremstyle{remark}

\renewenvironment{proof}{{\bfseries Proof.}}{\hfill$\Box$}

\newcommand{\CC}{{\mathbb C}}
\newcommand{\NN}{{\mathbb N}}

\newcommand{\RR}{{\mathbb R}}
\newcommand{\ZZ}{{\mathbb Z}}


\newcommand{\Cc}{{\mathcal C}}
\newcommand{\Hh}{{\mathcal H}}
\newcommand{\Ll}{{\mathcal L}}
\newcommand{\Mm}{{\mathcal M}}
\newcommand{\Nn}{{\mathcal N}}
\newcommand{\Pp}{{\mathcal P}}
\newcommand{\Rr}{{\mathcal R}}
\newcommand{\Tt}{{\mathcal T}}
\newcommand{\Ff}{{\mathcal F}}
\newcommand{\Oo}{{\mathcal O}}

\newcommand{\Xx}{{\mathcal X}}


\newcommand{\cs}{{\mathscr C}}
\newcommand{\gs}{{\mathscr G}}
\newcommand{\is}{{\mathscr I}}
\newcommand{\ks}{{\mathscr K}}
\newcommand{\os}{{\mathscr O}}
\newcommand{\us}{{\mathscr U}}
\newcommand{\vs}{{\mathscr V}}



\newcommand{\supp}{{\operatorname{supp}}}			
\newcommand{\Del}{{\mathit Del_{U,K}}}				
\newcommand{\Delr}{{\mathit Del_{*}}}				
\newcommand{\DelrG}{{\mathit Del_{*}^{G}}}			
\newcommand{\DelN}{{\mathit Del_{U,K}^{e}}}			
\newcommand{\inv}{{\mathit \is_{U,K}}}				
\newcommand{\invN}{{\mathit \is_{U,K}^{e}}}			
\newcommand{\invG}{{\mathit \is_{G}(X)}}			
\newcommand{\IDelGH}{{\mathit \is_{G\times H}(\Delr)}}			
\newcommand{\IDelG}{{\mathit \is_{G}(\Delr)}}			
\newcommand{\Lat}{{\mathit Lat}}					
\newcommand{\win}{\mathfrak{v}} 					
\newcommand{\Kfin}{{\mathcal K}^{fin}(G\times H,\win)}	
\newcommand{\KfinG}{{\mathcal K}^{fin}(G)}			
\newcommand{\PE}{{\mathcal P}{\mathcal E}(G,\Rr)}				
\newcommand{\Cfin}{{\mathfrak C}(G\times H,\win)}	

\setlength{\parindent}{0cm}
\allowdisplaybreaks

\usepackage{xcolor}

\begin{document}
\title{Delone dynamical systems and spectral convergence}
\author{Siegfried Beckus, Felix Pogorzelski}

\address{Department of Mathematics\\
Technion - Israel Institute of Technology\\
32000 Haifa, Israel}
\email{beckus.siegf@technion.ac.il}

\address{Department of Mathematics, University of Leipzig\\
04109 Leipzig, Germany}
\email{felix.pogorzelski@math.uni-leipzig.de}

\begin{abstract}
In the realm of Delone sets in locally compact, second countable, Hausdorff groups, 
we develop a dynamical systems approach in order to study the continuity behavior of measured quantities arising from point sets.
A special focus is both on the autocorrelation, as
well as on the density of states for random bounded
operators. It is shown that for uniquely ergodic limit systems, the latter measures behave continuously with respect to the Chabauty-Fell convergence of hulls. In the special situation of Euclidean spaces, our results complement recent developments in describing spectra as topological limits: we show that the measured quantities under consideration can be approximated via periodic analogs. 
\end{abstract}


\maketitle

%


\section{Introduction and main theme}

 
The spectral theory of random Schr\"odinger operators in aperiodic media is full of interesting, surprising and challenging phenomena.  During the past decade, tools from  a wide range of mathematical areas have been used and designed in order to gain deep insight into the arising spectral theoretic patterns. 
In this work we develop a dynamical systems approach for the sake of approximating spectral quantities of operators arising in quantum mechanical models of non-periodic solids. 
For this purpose, this work brings together a large variety of techniques from group dynamics, operator algebra theory and spectral theory. While our results apply to 
prominent quantities from mathematical physics, the techniques in the proofs rely on a general convergence principle for group actions. 
Furthermore, the toolbox developed in the present paper is interesting in its own right since it provides the basis for tackling 
exciting questions in dynamical systems theory. For instance, it can be used to construct compact spaces consisting of strictly ergodic translation actions. 

\medskip

With the help of transfer matrices and the trace map formalism, the spectral theory of the one-dimensional case was tremendously pushed forward during the last decades \cite{Sut87,BIST89,Bel90,BeBoGh91,Dam98,DaLe99I,DaLe99II,Len02}. Another remarkable outcome of these methods is the solution of the Ten Martini Problem \cite{AvJi09} by Avila and Jitomirskaya. Astonishing achievements have been also obtained for the Fibonacci Hamiltonian summarized in \cite{DaGoYe16}. While many facets of one-dimensional random Schr\"odinger operators are nowadays understood in quite significant depth, it seems fair to say that the picture is still rather unclear in the higher-dimensional situation. In fact, most attempts aiming at rigorously advancing the known methods to higher dimensions have remained incomplete so far, up to few exceptions \cite{LeSt06,DaGoSo13,DaGo16}.
 
\medskip

One meta strategy to approach higher-dimensional spectral theory in aperiodic geometries is via approximation. Recent results \cite{BeBe16,BeckusThesis16,BeBeNi17,BeBeCo17} started to implement a systematic approximation theory for the spectrum via approximating the underlying structures. Specifically, this reflects in the (H\"older)-continuity of the map
$$
\Sigma:\{\text{subsystems of } X\}\to\{K\subseteq\CC \text{ compact}\},\quad Y\mapsto\sigma(A_Y),
$$
for every normal random operator $A$ over a dynamical system $(X,G)$, where $\sigma(A_Y)$ is the spectrum of the restriction $A_Y$ to the subsystem $Y\subseteq X$, c.f. \@ \cite{BeckusThesis16,BeBeNi17,BeBeCo17}. Here, $X$ is a compact space and $G$ is a countable Hausdorff amenable group that acts continuously on $X$. Furthermore, the set of subsystems (invariant, closed subsets of $X$) is naturally equipped with the Chabauty-Fell topology \cite{Cha50,Fel62}. As usual, the convergence of compact subsets of $\CC$ is measured in the Hausdorff metric. This has been worked out in a much more general setting in \cite{BeBeNi17},
where also the case of Delone sets is covered. As such, these insights open the possibility 
to handle very interesting examples such as the Penrose tiling.

\medskip

Inspired by the mathematical and physical experiences so far, periodic structures serve as the best candidates for approximations since the machinery of Floquet-Bloch is available to analyze such systems. A simple compactness argument shows that the Chabauty-Fell closure of periodic subsystems is non-trivial by sending the periods to infinity. Thus, there always exist non-periodic subsystems that can be approximated in the Chabauty-Fell topology by periodic subsystems. However, the main difficulty lies in the classification of those subsystems. 
In one dimension, this has been done in \cite{BeckusThesis16,BeBeNi17}. Furthermore, for subsystems on $\ZZ^d$ defined by a substitution rule invoking local symmetries, the work \cite{BeckusThesis16} provides sufficient conditions for periodic approximability. In both cases, 
the constructions are explicit and they include both known results such as for the Fibonacci sequence, but also new examples, e.g.\@ for the Table tiling.

\medskip

The previously discussed approximation theory is the starting point of our project. For a Delone set $D$ in a general locally compact group $G$, the associated Delone dynamical system arises naturally as the translation action on the closure of the orbit $G.D$.  
We investigate continuity properties of two spectral quantities based on the convergence of invariant probability measures:
 
\begin{itemize}
\item {\bf Autocorrelation of Delone sets.} \\
The notion of mathematical autocorrelation was developed for describing diffraction experiments in a rigorous manner. Based on investigations on the diffractive behaviour of solids, Dan Shechtman discovered the physical existence of quasicrystalline structures \cite{SBGC84}. For this  groundbreaking observation, Shechtman was awarded the Nobel prize in chemistry in 2011. A suitable approach to model the underlying physical systems is via the class of Delone sets which are uniformly scattered point sets representing the atomic nuclei. It was Meyer \cite{Mey69,Mey70,Mey72} who started the investigation of certain generalizations of lattices ({\em Meyer sets}) in the context of number theoretical questions.
Shortly after Shechtman's experiments, mathematical physicists \cite{KrNe84,LeSt84,Hof95} began to develop a mathematical diffraction theory. It was explored shortly afterwards that many diffractive properties of Delone sets are reflected 
in spectral properties of the corresponding dynamical system. For results of this kind, see e.g.\@ the non-exclusive list of references \cite{Queffelec87,Dw93,Sch00, LeMoSo02,BaMo04,BaLe04,Gou05,LeStr09, BjHaPo17}. In particular situations, dynamical stability assumptions 
carry over to the diffractive level \cite{BaLe05}. 
For a detailed introduction and overview, we refer further to the recent monograph \cite{BaakeGrimm13} and references therein. Recently, a new non-commutative spherical diffraction theory was carried out in \cite{BjHaPo16,BjHaPo17} for regular model sets. These results will allow us to approximate the autocorrelation even in non-abelian settings. Analogous results for diffraction measures are yet to be explored. 

\item {\bf Density of states (DOS) for random bounded operators.}\\
The density of states (DOS) of an operator on a Hilbert space encodes its spectral distribution. For self-adjoint elements, it gives rise to a spectral measure on $\RR$. For random bounded operators arising from a Delone dynamical systems, the DOS is represented by the Pastur-Shubin trace formula \cite{Pas71,Shu79}. In mathematical physics, one is often interested in knowing the cumulative distribution function of the DOS, called {\em integrated density of states (IDS)}. In quantum mechanical models, the latter quantity provides information about the averaged number of states of an electron per unit volume below some fixed energy level.
A mathematically intuitive understanding of the DOS, respectively the IDS for operators on graphs is given by approximations via finite volume analogs as studied in \cite{Bel86,Len02, LeSt05, LMV08,Ele08, KeLe13, LV09, LSV11,Pog14, ScSc15, PS16}. For our purposes, the Pastur-Subin trace formula is the more suitable starting point since here the DOS is represented as an integral over the Delone dynamical system. We make use of the developments achieved in \cite{LeSt03-Delone,LeSt03-Algebras,LePeVe07} which give a description of the DOS in purely dynamical terms.  
\end{itemize}

Our main goal is to prove weak-$*$-continuity of these two spectral quantities with respect to convergence of the underlying dynamical systems. The central viewpoint is to consider classes of Delone dynamical systems as subsystems of one single topological dynamical system. 
As for the DOS, this also allows us to interpret random bounded operators as restrictions to these subsystems. 

\medskip

\textbf{Continuity result:} Let $(\Omega,\mu,G)$ be a uniquely ergodic Delone dynamical system.
Consider a sequence $(\Omega_n,\mu_n,G)$  of Delone dynamical systems 
such that $\Omega_n \to \Omega$ in the Chabauty-Fell topology. 
Then the following assertions hold.
\begin{itemize}
\item[(A)] The autocorrelations $\gamma_{\Omega_n,\mu_n}$ converge to the autocorrelation $\gamma_{\Omega,\mu}$ in the weak-* topology of Radon measures on $G$. 
\item[(B)] For every self-adjoint random bounded operator $A$, the DOS $\eta^A_{\Omega_n,\mu_n}$ converge to the DOS $\eta^A_{\Omega,\mu}$ in the weak-* topology of Radon measures on $\RR$. 
\end{itemize}
The precise statement of~(A) is provided in Corollary~\ref{cor:Appr-AC} based on Theorem~\ref{thm:Appr-wgtAC} and proven in Section~\ref{sec:ConAC}.
Statement~(B) is formulated rigorously in Corollary~\ref{cor:Appr-DOS} based on Theorem~\ref{thm:Appr-wgtDOS} and shown in Section~\ref{sec:RanOpDOS}.
The proof is based on an abstract convergence result of invariant probability measures for dynamical systems, cf.\@ Theorem~\ref{thm:mainapprox} in Section~\ref{sec:main}. One exploits 
the fact that the relevant data is encoded in the measures
$\mu_n, \mu$ defined in a common ``large'' ambient space. This point of view has been considered before for Delone dynamical systems in \cite{LeMo09}. In our situation, the convergence of measures is a consequence of the convergence of the underlying hulls.
 

\medskip

With these continuity results at our disposal, we can settle the approximation
of the autocorrelation and the DOS in the general context of regular model sets. The latter sets
are described by two groups $G$ and $H$, a cocompact lattice $\Gamma$ in the product $G \times H$,
and a compact subset $W \subseteq H$ called the {\em window}. A model set is then obtained
by projecting all lattice points in the cut out strip $G \times W$ to $G$.
Under certain regularity conditions on the lattice and the window, the assumption of unique ergodicity is automatically satisfied. This fact relies on the 
{\em parametrization map} relating the corresponding Delone dynamical system with the 
homogeneous type dynamics arising from the cut-and-project scheme. For abelian groups, this connection
has been established by Schlottmann \cite{Sch00}, and for general locally compact, second countable groups, this has recently been shown in \cite{BjHaPo16,BjHaPo17}. To deal with the natural discontinuities of the cutting,
we approximate the characteristic function $\chi_W$ of the window by continuous functions (called {\em window functions}) with compact support in $H$. This leads to the notion of {\em weighted Delone sets}, where the weight is determined by the window function. Due to the $G$-invariance of these
functions, there is a canonical notion of translation dynamical systems for weighted Delone sets.
For the sake of different applications, weighted model sets have been considered before, see e.g.\@ \cite{Baa01,BaMo04,LeRi07,Str14,RiSt15,RiSt17,BaHuSt17}. More specifically, weighted model sets were used as a tool to analyze weak model sets which are being studied intensively in ongoing research activitiy. For recent results on the topic, we refer to \cite{Moo02,HuRi15,BaHuSt17} and references therein.

\medskip

Regular model sets provide a vast collection of examples used in physics and mathematics. In the Euclidean setting, it is well known that a defect in the spectrum can be created by changing the lattice. This has for instance been observed in the Kohmoto model \cite{OsKi85,BIT91}. Such errors occur as the cutting process is not continuous. 
On the other hand, small changes of the lattice can always provide periodic systems in the Euclidean setting. Combining these two observations, the task is to show convergence of the measured quantities by changing the lattice without creating a defect. 
Via the parametrization map, the dynamical system of the hull of the regular model set is measurably isomorphic to the homogeneous space defined by the lattice. With this at hand and by approximating the characteristic function function of the window by continuous functions, the desired convergence is obtained without creating an error. This is due to the fact that the measured quantities are not sensitive to them as they are created at the boundary of the window. In particular, the defects observed in the Kohmoto model are eigenvalues of finite multiplicity which are not observed in the DOS.
It is worth pointing out that the hull and the quotient space $G \times H/ \Gamma$ are
quite different from a topological perspective, since the hull is totally disconnected. 
However, dealing with
strongly pattern equivariant potentials of the Hamiltonian, the
Schr\"odinger operator can be lifted to the quotient space while keeping all measured information.

\medskip

Our approximation result involves two limit processes. The first is convergence of lattices in the Chabauty-Fell topology, the second is the approximation of $\chi_W$ by continuous window functions. The lattice convergence implies the convergence of the associated homogeneous spaces.
This follows from the more general statement given in Theorem~\ref{thm:TransHullHomeo} which characterizes convergence of Delone dynamical systems.
Using the parametrization map, we lift all relevant quantities defined over the dynamical system of the regular model set to analogs defined on the homogeneous space. From the dynamical point of view, this approach is in spirit of \cite{KeRi16}, where dynamical properties of (weak) model sets are analyzed. 
As for the autocorrelation, respectively for the DOS, we can represent both measures
as an integral over the Haar measure $m_Y$ of $G \times H / \Gamma$, and the properties of $W$ are reflected only in the integrand. This is a key observation for proving approximation:
keeping unique ergodicity, there is a canonical way to define ``smoothed'' approximations in terms of continuous window functions. 
This puts us in the convenient situation to apply our continuity results from above to obtain convergence as $\Gamma_n \to \Gamma$ simultaneously for all approximating window functions. 
In the full generality of random bounded operators, it is impossible to lift the integrands for the DOS to functions on $G \times H / \Gamma$. However, we show that this can be done for the large class of strongly pattern strongly equivariant Schr\"odinger operators. Here, the potentials are given by strongly pattern equivariant functions which have been introduced in \cite{KePu00,Kel03,Kel08}. 

\medskip

\textbf{Approximations of regular model sets:} Let $P=P(G,H,\Gamma,W)$ be a regular model set. Consider a sequence of cocompact lattices $(\Gamma_n)$ converging to the cocompact lattice $\Gamma$ and a sequence of continuous window functions $(\win_l)$ converging pointwise monotonically to $\chi_W$. Then the following assertions hold.

\begin{itemize}
\item[(C)] The autocorrelations $\gamma_{\Gamma_n,\win_l}$ converge to the autocorrelation $\gamma_P$ in the weak-* topology of Radon measures on $G$. 
 
\item[(D)] For every strongly pattern equivariant Schr\"odinger operator $A$, the density of states $\eta^A_{\Gamma_n,\win_l}$ converge to the density of states $\eta^A_{P}$ in the weak-* topology
of Radon measures on $\RR$.  
\end{itemize}

In accordance with the above description, one first takes the limit in $n$, and afterwards in $l$.
The statement~(C) is formulated precisely in Theorem~\ref{thm:Appr-ModAC} and proven in 
Subsection~\ref{ssec:Appr-CP-AC}. The rigorous assertion of statement~(D) is given in 
Theorem~\ref{thm:Appr-ModDOS} and shown in Subsection~\ref{ssec:Appr-DOS-PEop}.

\medskip

We emphasize at this point that for the Euclidean cut-and-project schemes, one obtains 
approximations via quantities defined on periodic structures. To see this, note that lattices in 
$\RR^{d}$ are of the form $\Gamma = S \ZZ^d$ for some $S \in \operatorname{GL}_d(\RR)$. Hence, approximations of $\Gamma$ can be given in the form $\Gamma_n = Q_n\ZZ^d$, where all entries of
$Q_n$ are rational and $Q_n \to S$.  

\medskip

\textbf{Outlook}. 
The results of this paper are based on the fact that the dynamical systems
associated to the limit objects are uniquely ergodic. For regular model sets, this is the case. However, the dynamical convergence is induced by the convergence of lattices, whereas the approximation results of Theorem~\ref{thm:Appr-wgtAC} and~\ref{thm:Appr-wgtDOS} are valid 
for much more general Delone dynamical systems. This naturally raises the question of sufficient criteria on such systems to be uniquely ergodic. 
In view of new developments on Delone sets in quite general spaces, it is natural to address the question whether analog results remain
true in the setting of certain non-abelian groups such as the Heisenberg group.
In the Euclidean situation, Lagarias and Pleasants \cite{LaPl03} have shown that linear repetitive or densely repetitive Delone sets give rise to uniquely ergodic
hulls. By imposing quantitative restrictions on the repetivity, one obtains compact spaces of uniquely ergodic Delone dynamical systems. 
This and more will be addressed in a subsequent paper. 

\medskip

Imposing suitable amenability conditions, our continuity results even imply convergence of the spectra of the random bounded operators, cf.\@ Remark~\ref{rem:convergenceofspectra}.
This raises naturally the question if this remains true in the case of model sets as discussed in Theorem~\ref{thm:Appr-ModDOS}. The main task will be to 
show that the approximation of the window functions is compatible with the convergence of the spectra. 


\medskip

\textbf{Organization of the paper}. 
In Section~\ref{sec:main}, the basic concepts of dynamical systems and Chabauty-Fell convergence
of subsystems are introduced. Furthermore, we prove in Theorem~\ref{thm:mainapprox} a general 
convergence principle of invariant probability measures via topological convergence of dynamical systems.
Delone sets and Delone dynamical systems and their transversals are introduced in Section~\ref{sec:DDS}. We first characterize the convergence of Delone dynamical systems by the convergence of the associated transversals, cf.\@ Theorem~\ref{thm:TransHullHomeo}. Furthermore, compact spaces of lattices are analyzed in Theorem~\ref{thm:LatCom}. Section~\ref{sec:Appr-AC} is devoted to weighted Delone sets and the convergence of the autocorrelation, cf.\@ Theorem~\ref{thm:Appr-wgtAC} and Corollary~\ref{cor:Appr-AC}. Next, we discuss random bounded operators and the density of states in Section~\ref{sec:RanOpDOS}. The convergence of the DOS is settled in Theorem~\ref{thm:Appr-wgtDOS} and Corollary~\ref{cor:Appr-DOS}. The previous continuity results are applied to regular model sets in Section~\ref{sec:CP-schemes}. Here, Theorem~\ref{thm:Appr-ModAC} covers the autocorrelation, and Theorem~\ref{thm:Appr-ModDOS} treats the DOS for strongly pattern equivariant Schr\"odinger operators. We give an appendix in Section~\ref{sec:appendix} about groupoid von Neumann algebras and the Pastur-Shubin trace formula.

\medskip

\textbf{Acknowledgments}.
We thank the organizers and the participants of the Oberwolfach Workshop 1740 ``Spectral Structures and Topological Methods in Mathematical Quasicrystals'' for various enlightening discussions. Special thanks go to Daniel Lenz for drawing our attention to relevant references and for raising interesting follow-up questions.
FP is thankful for financial support through a Technion Fine Fellowship.


\section{A general approximation theorem} \label{sec:main}

This section provides an abstract convergence principle for dynamical systems. The precise assertion is stated and proven in Subsection~\ref{ssec:ConPrin}. In the following, we clarify the setting and introduce basic notions of dynamical systems.

\subsection{Preliminaries} \label{ssec:prelim}

Throughout the whole paper, $G$ denotes lcsc group with neutral element $e$. When writing lcsc, we will always refer to a locally compact, second countable group which is additionally Hausdorff. 
The group $G$ comes along with a left-invariant Radon measure, called the {\em left Haar measure}, which is unique up to constants. Furthermore, we suppose that $G$ is unimodular, i.e.\@ the unique left-invariant Haar measure is also invariant from the right. Hence, we will just use the term {\em Haar measure}. The restriction to unimodular groups is not necessary for all our results. However, in our main applications, we have to deal with the existence of lattice subgroups and of invariant measures on groupoids induced by measure preserving dynamical systems. The existence of either of those objects gives a necessary criterion on the group to be
unimodular.
We will further assume that $G$ as introduced before act by homeomorphisms on compact, metrizable spaces $X$. This
way, we obtain a {\em topological dynamical system}, denoted by $(X,G)$.
In most situations, we will additionally assume that there is a probability
measure $\mu$ on $X$ which is invariant under the action of $G$. 
This means that for every Borel set $A \subseteq X$ and each $g \in G$,
we have $\mu(gA) = \mu(A)$.  The triple
$(X,\mu,G)$ is then referred to as a {\em measure dynamical system}. In the 
sequel, we will just use the term {\em dynamical system}, since the focus on
topological or measure aspects shall become clear from the context. For
a dynamical system $(X,G)$, we say that $Y$ is an invariant set if $gY \subseteq Y$
for all $g \in G$. A measure dynamical system $(X,\mu,G)$ is said to be {\em ergodic}
if for all invariant sets $Y \subseteq X$, one has $\mu(Y) \in \{0,1\}$. 
Note that for non-amenable groups, 
the existence of invariant measures is not guaranteed. However, in all applications
we have in mind, one can find them and in many situations, there is even
only one such measure. In this context, we say that the dynamical systems $(X,G)$ 
and $(X,\mu,G)$
are {\em uniquely ergodic} if $\mu$ is the only $G$-invariant probability measure
on $X$. It is easy to see that then, $\mu$ is in fact ergodic.
In the presence of a dynamical system $(X,G)$ and a compact invariant set $Y\subseteq X$,
we call the dynamical system $(Y,G)$ a {\em subsystem} of $(X,G)$. In non-ergodic
situations, there might be non-trivial subsystems $(Y,G)$ 
carrying a $G$-invariant probability measure $\mu$. This is the central viewpoint for
our approach: we will deal with (uniquely) ergodic subsystems $(Y,\mu,G)$, which we will also
interpret as non-ergodic dynamical systems $(X, \tilde{\mu},G)$ over 
the ambient space, where $\tilde{\mu}$ is the natural extension of $\mu$ to $X$.

\medskip

The space of Radon measures $\Rr(X)$ on the Borel $\sigma$-algebra of the compact Hausdorff space $X$ is identified with $\Cc_c(X)'$ of continuous linear functionals on the space $\Cc_c(X)$. Then $\Rr(X)$ is equipped with the induced weak-$\ast$ topology. If a group $G$ acts on $X$, the subspace of $G$-invariant, probability measures is denoted by $\Mm(X)$. Recall that the support of $\mu\in\Mm(X)$ is defined by
$$
\supp(\mu) \;
	:= \; \big\{
			x\in X
			\,\big|\,
			\text{for all open } V\ni x \Rightarrow \mu(V)>0
		\big\}
		\,.
$$
Note that $\supp(\mu) \in \ks(X)$ for every $\mu \in \Mm(X)$. 

\medskip

Let $Z$ be a locally compact, second countable, Hausdorff space. Then $Z$ is a {\em normal} space (which means that disjoint closed subsets can be separated by open
sets) and also metrizable. Denote by $\ks(Z)$ the set of all compact subsets of $Z$,
and by $\cs(Z)$ the collection of all closed subsets of $Z$. For suitable choices
of $Z$, we will endow the 
spaces $\ks(Z)$ and $\cs(Z)$ with natural topologies.
Precisely, we will use the so-called {\em Chabauty-Fell topology}. It was 
Chabauty~\cite{Cha50} who topologized the spaces of subgroups in countable groups.
Some years later, Fell \cite{Fel62} defined a generalization of the Chabauty-topology on 
$\ks(Z)$ for general topological spaces $Z$.
As a side remark, we point out that other topologies on the space of closed
or compact subsets of topological spaces have been invented and investigated
much earlier, e.g. the {\em Hausdorff topology} \cite{Hausdorff14} 
over metric spaces or the
{\em Vietoris topology} \cite{Vie22} dealing 
with closed rather than with compact sets for the parametrization of the basis. 
A basis for the Chabauty-Fell topology on $\cs(Z)$ is given by 
$$
\vs(K,\os)\; 
	:= \; \big\{Y\in\cs(Z)\;|\; 
		K\cap Y=\emptyset\,,\; 
		O\cap Y\neq\emptyset \text{ for all } O\in\os
	\big\}
$$
where $K\subseteq Z$ is a compact subset and $\os$ is a finite family of open subsets of $Z$.
In an analogous manner, one defines the Chabauty-Fell topology on $\ks(Z)$.

\medskip

The corresponding Chabauty-Fell topology on $\cs(Z)$ is metrizable and complete, and 
the space $\cs(Z)$ is compact with respect to this topology, see \cite{Fel62,Bee93}.
Using the description of the basis given above, we obtain a canonical concept of
convergence. 
Namely, $Y_n \to Y$ in $\cs(Z)$ if for all neighborhoods $\mathcal{U}$ of
$Y$, there is some $N \in \NN$ such that $Y_n \in \mathcal{U}$
for each $n \geq N$.

\subsection{Convergence principle} \label{ssec:ConPrin}

This subsection is devoted to present a convergence result on dynamical systems.

\begin{theorem}\label{thm:mainapprox}
Let $G$ be a lcsc group which acts by homeomorphisms on a compact, second countable, Hausdorff space $X$.
Assume that $(Y_l, \mu_l,G)$ $(l \in \NN)$ are subsystems of $(X,G)$ and suppose that 
$(Y,\mu,G)$ is a uniquely ergodic subsystem of $(X,G)$. \\
If $Y_l \to Y$ in the Chabauty-Fell topology on $\ks(X)$, then $\mu_l \to \mu$ in the weak-* topology of $\Mm(X)$.
\end{theorem}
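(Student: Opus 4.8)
The plan is to exploit weak-$*$ compactness of the space of Borel probability measures on $X$ together with unique ergodicity to pin down every accumulation point of the sequence $(\mu_l)$. Note first that, being compact, second countable and Hausdorff, $X$ is metrizable, so $\Cc_c(X) = \Cc(X)$ is separable and the set of all Borel probability measures on $X$ is compact and metrizable in the weak-$*$ topology. In a metrizable space, $\mu_l \to \mu$ is equivalent to the assertion that every subsequence of $(\mu_l)$ admits a further subsequence converging to $\mu$. Hence it suffices to pick an arbitrary weak-$*$ convergent subsequence $\mu_{l_k} \to \nu$ (which exists by compactness) and to show $\nu = \mu$.

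I would next check that $\nu$ is a $G$-invariant probability measure. Testing against the constant function $1 \in \Cc_c(X)$ gives $\nu(X) = \lim_k \mu_{l_k}(X) = 1$, so $\nu$ is a probability measure. For invariance, fix $g \in G$ and $f \in \Cc_c(X)$; since $G$ acts by homeomorphisms, $f \circ g \in \Cc_c(X)$, and the $G$-invariance of each $\mu_{l_k}$ reads $\int_X f\circ g \, d\mu_{l_k} = \int_X f \, d\mu_{l_k}$. Passing to the limit in $k$ on both sides yields $\int_X f \circ g \, d\nu = \int_X f \, d\nu$ for all $f$ and $g$, so $\nu$ is $G$-invariant.

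The crucial geometric step is to prove $\supp(\nu) \subseteq Y$, and this is where the Chabauty-Fell convergence enters, through the ``upper semicontinuity'' direction only. Let $x \in X \setminus Y$. As $Y$ is compact and $X$ is metrizable, choose an open neighbourhood $V$ of $x$ whose closure $K := \overline{V}$ is compact and satisfies $K \cap Y = \emptyset$; then $Y \in \vs(K,\emptyset)$, where $\os = \emptyset$ imposes no condition. By $Y_l \to Y$ we obtain $Y_l \in \vs(K,\emptyset)$, i.e.\ $Y_l \cap K = \emptyset$ and a fortiori $Y_l \cap V = \emptyset$, for all large $l$. Since $\supp(\mu_l) \subseteq Y_l$, this forces $\mu_{l_k}(V) = 0$ for all large $k$, whence the portmanteau inequality for open sets gives $\nu(V) \le \liminf_k \mu_{l_k}(V) = 0$. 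Thus $x \notin \supp(\nu)$, and therefore $\supp(\nu) \subseteq Y$.

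To conclude, $\nu$ is a $G$-invariant probability measure carried by the invariant compact set $Y$, so it restricts to a $G$-invariant probability measure on the subsystem $(Y,G)$. Unique ergodicity of $(Y,\mu,G)$ forces this restriction to coincide with $\mu$, and since $\nu$ is supported on $Y$ we get $\nu = \mu$. As the accumulation point was arbitrary, $\mu_l \to \mu$ in $\Mm(X)$. I expect the support containment $\supp(\nu) \subseteq Y$ to be the main obstacle, in the sense that it requires correctly coupling the Chabauty-Fell convergence with the support bounds $\supp(\mu_l) \subseteq Y_l$ and the lower semicontinuity of open-set mass under weak-$*$ limits; it is worth emphasizing that the ``lower'' direction of Chabauty-Fell convergence is not needed here, since unique ergodicity already determines the limit once it is known to live on $Y$.
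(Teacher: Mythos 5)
Your proof is correct and follows essentially the same route as the paper: extract a weak-$*$ accumulation point by compactness, show its support lies in $Y$ using only the ``avoidance'' half of Chabauty-Fell convergence together with $\supp(\mu_l)\subseteq Y_l$, and identify it via unique ergodicity. The only cosmetic difference is that the paper's Lemma~\ref{lem-keyMeas} establishes the support containment with a Urysohn bump function and weak-$*$ convergence of $\mu_l(f)$, whereas you use the portmanteau inequality for open sets; you also spell out the $G$-invariance of the limit, which the paper absorbs into the compactness of $\Mm(X)$.
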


The theorem gives a continuity result for suitably topologized dynamical systems. This viewpoint is underscored by the fact that every subsystem $(Y,G)$ can be interpreted as a point in $\invG$, where the latter space denotes the space of all $G$-invariant, closed (in fact compact)
subsets of $X$. Note that $\invG$ is a closed subspace of $\cs(X)$
with respect to the Chabauty-Fell topology, c.f. \cite{BeBeNi17} and \cite[Proposition~3.2.5]{BeckusThesis16}.

\medskip

The proof of Theorem~\ref{thm:mainapprox} builds on the following key lemma.

\begin{lemma}
\label{lem-keyMeas}
Let $X$ be a compact, second countable Hausdorff space. Consider a sequence of probability measures $\mu_l\in\Mm(X),\, l\in\NN,$
with their supports $\supp(\mu_l)$ being contained in $C_l \in\ks(X),\, l\in\NN$. Suppose that $C_l$ converges 
to $C \in \ks(X)$ in the Chabauty-Fell topology. \\
Then, the support of every weak-* accumulation point of $(\mu_l)_l$ must be contained in $C$.
\end{lemma}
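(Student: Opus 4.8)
The plan is to show that no point of $X \setminus C$ can lie in the support of a weak-* accumulation point. Fix such an accumulation point $\mu$ of $(\mu_l)_l$. Since $X$ is compact and metrizable, the weak-* topology on probability measures is metrizable, so I may pass to a subsequence $\mu_{l_k} \to \mu$. It then suffices to produce, for every $x \in X \setminus C$, an open neighborhood $V \ni x$ with $\mu(V) = 0$: this forces $x \notin \supp(\mu)$, and as $x$ is arbitrary and $C$ closed, it yields $\supp(\mu) \subseteq C$.

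\medskip

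To locate $V$, I would first exploit that $X$, being compact Hausdorff, is normal. The disjoint closed sets $\{x\}$ and $C$ can thus be separated, which furnishes an open set $V \ni x$ with $\overline{V} \cap C = \emptyset$. The compact set $\overline{V}$ now determines the Chabauty-Fell basic neighborhood $\vs(\overline{V}, \emptyset)$ of $C$, imposing only the avoidance condition $\overline{V} \cap Y = \emptyset$. By the assumed convergence $C_l \to C$ there is an $N \in \NN$ with $C_l \in \vs(\overline{V}, \emptyset)$, i.e.\@ $C_l \cap \overline{V} = \emptyset$, for all $l \geq N$. Since $\supp(\mu_l) \subseteq C_l$ and $X$ is second countable (whence $\mu_l(X \setminus \supp(\mu_l)) = 0$), the measure $\mu_l$ is concentrated on $C_l$; as $V \cap C_l = \emptyset$, this forces $\mu_l(V) = 0$ for every $l \geq N$.

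\medskip

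It remains to transport this vanishing to the limit, which is the crux of the argument. Here I would invoke lower semicontinuity of the measure of open sets under weak-* convergence. Concretely, for any $f \in \Cc_c(X)$ with $0 \le f \le 1$ and support contained in $V$, one has $\int f \, d\mu_{l_k} = 0$ for all large $k$, hence $\int f \, d\mu = \lim_k \int f \, d\mu_{l_k} = 0$; taking the supremum over all such $f$ and using inner regularity of the Radon measure $\mu$ gives $\mu(V) = 0$, completing the reduction. The only delicate point is the interplay of the two convergence notions: translating the Chabauty-Fell avoidance of the compact sets $C_l$ into a uniform measure-theoretic null statement, and then upgrading it through weak-* lower semicontinuity. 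By contrast, the normality-based separation step and the regularity identity are standard facts for compact metrizable spaces and require no further work.
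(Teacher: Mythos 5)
Your proof is correct and follows essentially the same route as the paper's: both exploit that Chabauty--Fell convergence $C_l \to C$ forces the $C_l$ eventually to avoid a compact neighborhood of any point outside $C$, and then transfer the resulting vanishing $\mu_l(V)=0$ to the limit via weak-* convergence against compactly supported test functions. The only cosmetic difference is that the paper argues by contradiction with a single Urysohn function peaking at a point of $\supp(\mu)\setminus C$, whereas you argue directly via lower semicontinuity of $\mu(V)$ on open sets; both steps are sound.
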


\begin{proof}
Without loss of generality, assume that $\mu_l\rightharpoonup\mu$ (otherwise pass to a subsequence). Assume $\supp(\mu)\not\subseteq C$ holds. Thus, there is an $x\in \supp(\mu)\setminus C$. Since $X$ is a normal space, Urysohn's Lemma applies. Specifically, there exists an $f\in\Cc_c(X)$ satisfying $0\leq f\leq 1$, $f(x)= 1$ and $\supp(f)\cap C=\emptyset$. Consider the compact set $K:=\{y\in X\,|\, f(y)\geq \frac{1}{2}\}\subseteq\supp(f)$. Then a short computation yields
$$
\mu(f) \;
	= \; \int_X f\, d\mu \;
	\geq \; \int_K f \, d\mu \;
	\geq \; \frac{1}{2}\cdot \mu(K)
	\,.
$$
Since $K$ is a neighborhood of $x\in\supp(\mu)$, $\mu(K)>0$ follows. The weak-* convergence of the measures $(\mu_l)_l$ to $\mu$ implies $\lim_{l\to\infty} \mu_l(f)=\mu(f)$. On the other hand, the Chabauty-Fell open set $\us(\supp(f),\{X\})$ is by our assumption a neighborhood of $C$. By the convergence of $C_l$ to $C$, there is an $l_0\in\NN$ such that $C_l\in\us(\supp(f),\{X\})$ for $l\geq l_0$. Since $\supp (\mu_l) \subseteq C_l$, it is implied that $\supp(f)\cap\supp(\mu_l)=\emptyset$ for $l\geq l_0$. Consequently, $\mu_l(f)=0$ follows for $l\geq l_0$ contradicting $\lim_{l\to\infty} \mu_l(f)=\mu(f)>0$.
\end{proof}

\medskip

We are now ready to prove the abstract convergence theorem of the present paper.
We show that for uniquely ergodic subsystems $(Y,\mu,G)$, the measure $\mu$
is obtained as a weak*-limit of the measures $\mu_l$ associated with 
subsystems $(Y_l, G)$,
whenever $Y_l$ converges to $Y$ in the Chabauty-Fell topology.  

\medskip

{\bf Proof of Theorem~\ref{thm:mainapprox}.}
Since the space $\Mm(X)$ is compact in the weak-* topology, and since we can interpret the measures $\mu_l$
as elements in $\Mm(X)$, we can use the theorem of Banach-Alaoglu to obtain a weak-* accumulation point 
$\widetilde{\mu}\in \Mm(X)$. The support of the measures $\mu_l$ is clearly contained in $Y_l$. Thus,
we deduce from Lemma~\ref{lem-keyMeas} that $\supp(\widetilde{\mu}) \subseteq Y$.
Hence, $\widetilde{\mu}$ is a $G$-invariant probability measure on $Y$. However, the dynamical system
$(Y,G)$ is uniquely ergodic by assumption, and so $\widetilde{\mu} = \mu$. \hfill$\Box$


\section{Delone dynamical systems} \label{sec:DDS}

In the upcoming section, we introduce Delone dynamical systems over a lcsc group $G$. As a special case, we study the subspace of lattices. Delone sets are discrete point sets in $G$ that are uniformly discrete with bounded gaps. The discreteness is measured by an open unit neighborhood $U\subseteq G$ and the maximal size of the gaps is determined by a compact neighborhood $K\subseteq G$ of the unit. The precise definition of $(U,K)$-Delone sets is given in Definition~\ref{def:LSepRDen}. The group $G$ acts naturally by translation on $\Del$ and we analyze the space $\inv:={\mathit \is_{G}\big(\Del\big)}$. For $\Omega\in\inv$, the subset $\Tt\subseteq\Omega$ of all elements $D\in\Omega$ such that $e\in D$ is called transversal $\invN$ is the space of transversals equipped with the induced Chabauty-Fell topology. For the issue of topological spectral approximation, it is convenient in many situations to consider convergence properties of transversals \cite{BeBeNi17}. However, for the approximation of measured spectral quantities in Delone dynamical systems, the suitable approach is to investigate convergence of hulls in the Chabauty-Fell topology. The reason for this is that the hull is invariant with respect to the $G$-action, which allows us to work with $G$-invariant measures and to apply the results from Section~\ref{sec:main}. This cannot be expected for the transversal. The following Theorem~\ref{thm:TransHullHomeo} shows that topologically, convergence of hulls and convergence of transversals is the same. As usual, for $\Tt \in \ks(\Del)$ and a subset $A \subset G$, we define $A \cdot \Tt:= \bigcup_{a \in A} a\cdot \Tt$. 

\begin{theorema}
\label{thm:TransHullHomeo}
The map $\Phi:\invN\to\inv,\, \Tt \mapsto G\cdot \Tt,$ is a homeomorphism. In particular, for $D_n,D\in\Del$, the transversals
$\Tt^{D_n}$ converge to $\Tt^D$ if and only if the hulls $\Hh^{D_n}$ converge to $\Hh^D$.
\end{theorema}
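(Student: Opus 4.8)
The plan is to exhibit an explicit inverse of $\Phi$ and to deduce the homeomorphism property from a soft compactness argument, thereby circumventing a direct continuity proof for $\Phi$. Define $\Psi:\inv\to\invN$ by $\Psi(\Omega):=\{D\in\Omega\mid e\in D\}$, so that $\Psi$ sends a hull to its transversal. First I would check that $\Phi$ and $\Psi$ are mutually inverse. The inclusion $G\cdot\Psi(\Omega)\subseteq\Omega$ is immediate from $G$-invariance of $\Omega$, and the reverse inclusion uses relative denseness: for $D\in\Omega$ pick any point $t\in D$ (such a point exists since $D$ is relatively dense); then $t^{-1}D\in\Omega$ by invariance and $e\in t^{-1}D$, so $t^{-1}D\in\Psi(\Omega)$ and hence $D=t\cdot(t^{-1}D)\in G\cdot\Psi(\Omega)$. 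This gives $\Phi\circ\Psi=\mathrm{id}_{\inv}$. Since every element of $\invN$ is by definition the transversal $\Psi(\Omega_0)$ of some hull $\Omega_0$, and the previous computation shows $G\cdot\Psi(\Omega_0)=\Omega_0$, we also obtain $\Psi\circ\Phi=\mathrm{id}_{\invN}$. In particular $\Phi$ is a well-defined bijection with $\Phi^{-1}=\Psi$, its image $G\cdot\Tt$ being the compact invariant hull $\Omega_0$.

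Next I would prove that $\Psi$ is continuous; as all spaces in sight are metrizable it suffices to argue sequentially. Let $\Omega_n\to\Omega$ in $\inv$; I claim $\Psi(\Omega_n)\to\Psi(\Omega)$, which I verify through the two defining Chabauty--Fell conditions. For the lower condition, take $D\in\Psi(\Omega)$, so that $e\in D$. By convergence of the hulls there are $E_n\in\Omega_n$ with $E_n\to D$, and since $e\in D$ the lower Chabauty--Fell condition applied in $\Del\subseteq\cs(G)$ furnishes points $t_n\in E_n$ with $t_n\to e$. Then $t_n^{-1}E_n\in\Omega_n$ by invariance and $e\in t_n^{-1}E_n$, hence $t_n^{-1}E_n\in\Psi(\Omega_n)$, while joint continuity of the translation action gives $t_n^{-1}E_n\to e^{-1}D=D$. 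For the upper condition, suppose $D_n\in\Psi(\Omega_n)$ along a subsequence with $D_n\to D$. Then $D\in\Omega$ because $\Omega_n\to\Omega$ and $\Omega$ is closed, and $e\in D$ because $e\in D_n$ for all $n$: were $e\notin D$, the compact set $\{e\}$ would be disjoint from the closed set $D$, forcing $e\notin D_n$ eventually, a contradiction. Thus $D\in\Psi(\Omega)$, and the two conditions together yield $\Psi(\Omega_n)\to\Psi(\Omega)$.

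Finally, I would invoke a soft argument to conclude. The space $\inv$ is a closed subspace of the compact space $\cs(\Del)$, hence compact, whereas $\invN\subseteq\ks(\Del)$ is metrizable and in particular Hausdorff. A continuous bijection from a compact space onto a Hausdorff space is automatically a homeomorphism; applied to $\Psi$ this shows $\Psi$ is a homeomorphism, and therefore so is $\Phi=\Psi^{-1}$. The ``in particular'' statement then follows by specializing to $\Omega_n=\Hh^{D_n}$ and $\Omega=\Hh^{D}$, whose transversals are precisely $\Tt^{D_n}$ and $\Tt^{D}$.

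The main obstacle to anticipate would appear if one tried to prove continuity of $\Phi$ directly: writing an element of the limiting hull as $g_n\Tt_n$ with the $\Tt_n$ in the transversals, one cannot control the translates $g_n$, which may escape to infinity. Routing the argument through $\Psi$ avoids this difficulty entirely, since the transversal is recovered by normalizing at a point of the Delone set (namely $e$) rather than by tracking arbitrary translates. The only genuinely Delone-theoretic inputs required are the joint continuity of the $G$-action on $\Del$ and the two elementary semicontinuity facts about membership of $e$ used above.
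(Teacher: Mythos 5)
Your route is genuinely different from the paper's: instead of proving continuity of $\Phi$ directly, you prove continuity of the candidate inverse $\Psi(\Omega)=\{D\in\Omega\mid e\in D\}$ and then apply the compact-to-Hausdorff principle on the $\inv$ side. The continuity argument for $\Psi$ via the two Kuratowski-type conditions is correct (the lower condition uses joint continuity of the translation action, the upper one the closedness of $\{D\mid e\in D\}$ in the Chabauty--Fell topology), and it is softer than the paper's direct proof that $\Phi$ is continuous, which needs a separate reduction (Lemma~\ref{lem-Hilfs2}) replacing the $G$-saturation of $\Tt$ by a $K^{-1}$-saturation. The verification of $\Phi\circ\Psi=\mathrm{id}_{\inv}$ is also fine.

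However, there is a genuine gap at the step ``every element of $\invN$ is by definition the transversal $\Psi(\Omega_0)$ of some hull $\Omega_0$.'' Under Definition~\ref{def:Inv}, $\invN$ consists of all closed, non-empty $\Tt\subseteq\DelN$ with $\{x^{-1}L\mid x\in L\}\subseteq\Tt$ for every $L\in\Tt$; nothing in that definition says such a $\Tt$ arises from an element of $\inv$. This claim is exactly the surjectivity of $\Psi$, and without it your compactness argument only shows that $\Psi$ is a homeomorphism onto its image, while $\Phi$ is not even known to be well defined (one must show that $G\cdot\Tt$ is closed for an abstract $\Tt\in\invN$). Proving this forces you to confront precisely the obstacle you describe in your final paragraph: if $g_nL_n\to L$ with $L_n\in\Tt$, the translates $g_n$ could a priori escape to infinity. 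The paper's step (a) resolves this with relative denseness: since $L_n\cdot K=G$, one can write $g_n=h_n^{-1}x_n^{-1}$ with $x_n\in L_n$ and $h_n$ in the compact set $K$, pass to a convergent subsequence $h_n\to h$, and obtain $x_n^{-1}L_n=h_n(g_nL_n)\to hL\in\Tt$ by closedness of $\Tt$, whence $L=h^{-1}(hL)\in G\cdot\Tt$; combined with the invariance of $\Tt$ (if $e\in gL$ then $g^{-1}\in L$, so $gL\in\Tt$), this yields $\Psi(G\cdot\Tt)=\Tt$. With that ingredient inserted your argument closes; without it, the difficulty has not been circumvented but merely relocated into an unproven ``by definition'' assertion.
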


We prove the theorem in Subsection~\ref{ssec:delone}. 

\medskip

Next, the set of lattices $\Lat$ in $G$ is studied. Special attention is drawn to the spaces $\inv(\Lat)\subseteq \inv$ and $\invN(\Lat)\subseteq \invN$ of the corresponding hulls and transversals of lattices $\Lat \cap \Del$ that are $U$-left uniformly discrete and $K$-left relatively dense. The following statement implies that there exist compact spaces of dynamical systems that contain only uniquely ergodic dynamical systems. In combination with the Theorem~\ref{thm:mainapprox}, this is used for regular model sets in Section~\ref{sec:CP-schemes}.

\begin{theorema}
\label{thm:LatCom}
Let $U\subseteq G$ be open and $K\subseteq G$ be compact. Then the topological spaces $\inv(\Lat), \invN(\Lat)$ and $\Lat \cap \Del$ equipped with the corresponding Chabauty-Fell topology are homeomorphic. In particular, $\inv(\Lat)$ and $\invN(\Lat)$ are compact, second countable and Hausdorff spaces and every element in $\inv(\Lat)$ is uniquely ergodic.
\end{theorema}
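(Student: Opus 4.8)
The plan is to build a chain of homeomorphisms $\Lat\cap\Del \to \invN(\Lat) \to \inv(\Lat)$ and to deduce all the remaining properties by transporting them along this chain, exploiting the single structural fact that a lattice is a subgroup. The homeomorphism $\invN(\Lat)\to\inv(\Lat)$ will come essentially for free by restricting the homeomorphism $\Phi\colon\invN\to\inv$ of Theorem~\ref{thm:TransHullHomeo}, once I check that $\Phi$ maps $\invN(\Lat)$ onto $\inv(\Lat)$. The homeomorphism $\Lat\cap\Del\to\invN(\Lat)$ and the unique ergodicity both rest on an explicit computation of the transversal and the hull of a lattice, which I carry out first.

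Fix $\Lambda\in\Lat\cap\Del$. Since $\Lambda$ is a subgroup, for $g\in G$ one has $e\in g\cdot\Lambda$ if and only if $g\in\Lambda$, and in that case $g\cdot\Lambda=\Lambda$; hence the transversal is a single point, $\Tt^\Lambda=\{\Lambda\}$. For the hull, the stabilizer of $\Lambda$ under the translation action is exactly $\Lambda$, so the continuous orbit map $g\mapsto g\cdot\Lambda$ descends to a continuous $G$-equivariant bijection $G/\Lambda\to G\cdot\Lambda$. As $\Lambda$ is $K$-relatively dense it is cocompact, so $G/\Lambda$ is compact; the bijection is therefore a homeomorphism onto a compact, hence closed, subset of $\cs(G)$. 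Consequently $\Hh^\Lambda=\overline{G\cdot\Lambda}=G\cdot\Lambda$ is $G$-equivariantly homeomorphic to $G/\Lambda$.

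With these computations in hand, the map $j\colon\Lat\cap\Del\to\invN(\Lat)$, $\Lambda\mapsto\Tt^\Lambda=\{\Lambda\}$, is nothing but the restriction of the canonical singleton embedding $\Del\hookrightarrow\ks(\Del)$, $x\mapsto\{x\}$, which is a homeomorphism onto its image in the Chabauty--Fell topology; this yields $\Lat\cap\Del\cong\invN(\Lat)$. Since $\Phi(\{\Lambda\})=G\cdot\{\Lambda\}=G\cdot\Lambda=\Hh^\Lambda$, the homeomorphism $\Phi$ of Theorem~\ref{thm:TransHullHomeo} restricts to a homeomorphism $\invN(\Lat)\cong\inv(\Lat)$, completing the chain. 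For compactness, second countability and the Hausdorff property it then suffices to establish them for $\Lat\cap\Del$ and transport them: as $\Del$ is compact and metrizable in the Chabauty--Fell topology, it is enough to show that $\Lat\cap\Del$ is closed in $\Del$. Here the only genuine external input enters: by the classical closedness of the space of closed subgroups in the Chabauty--Fell topology \cite{Cha50}, intersecting with the closed set $\Del$ shows $\Lat\cap\Del$ is closed; moreover every element of this intersection is a closed subgroup that is $U$-uniformly discrete and $K$-relatively dense, hence a (uniform) lattice, so the intersection indeed equals $\Lat\cap\Del$.

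Finally, unique ergodicity of each $\Omega\in\inv(\Lat)$ reduces, via the $G$-equivariant homeomorphism $\Omega\cong G/\Lambda$, to the claim that the \emph{transitive} translation action of $G$ on $G/\Lambda$ admits a unique invariant probability measure. Since $G$ is unimodular and $\Lambda$ is a lattice, the $G$-invariant Radon measure on $G/\Lambda$ exists and is unique up to scaling; after normalization it is the only invariant probability measure, which is unique ergodicity. The main obstacle I anticipate is the closedness of $\Lat\cap\Del$ in $\Del$: one must ensure both that a Chabauty--Fell limit of the given lattices remains a subgroup (this is exactly the cited closedness of the subgroup space) and that the Delone constraints survive in the limit, so that the limiting subgroup is still discrete and cocompact, i.e.\ a genuine lattice rather than some larger closed subgroup. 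Everything else is a transport of structure along the two homeomorphisms.
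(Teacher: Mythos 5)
Your proposal is correct and follows essentially the same route as the paper: compute $\Tt^\Lambda=\{\Lambda\}$ and $\Hh^\Lambda=G\cdot\Lambda\cong G/\Lambda$ (the paper's Lemma~\ref{lem:LatDel}), identify $\Lat\cap\Del\cong\invN(\Lat)$ via the singleton map (the paper's Proposition~\ref{prop:weakastlattice}), restrict the homeomorphism of Theorem~\ref{thm:TransHullHomeo} to get $\invN(\Lat)\cong\inv(\Lat)$, and derive unique ergodicity from uniqueness of the invariant measure on the compact homogeneous space. The only difference is that you spell out the closedness of $\Lat\cap\Del$ in $\Del$ and the unique ergodicity on $G/\Lambda$, which the paper handles by citation and by an earlier remark, respectively.
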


The proof of this theorem is provided in Subsection~\ref{ssec:spacLat}.

\subsection{Delone sets and topology} \label{ssec:delone}

Let $G$ be a lcsc group then Delone sets are defined as follows.

\begin{definition}
\label{def:LSepRDen}
Let $D \subseteq G$ be a non-empty set. For an open set $U \subseteq G$, and a compact set $K \subseteq G$, we say that 
\begin{itemize}
\item $D$ is {\em $U$-left uniformly discrete} if $\#\big( gU \cap D\big) \leq 1$ for all $g \in G$;
\item $D$ is {\em $K$-left relatively dense} if $D\cdot K = G$. 
\end{itemize}
A $U$-left uniformly discrete and $K$-left relatively dense set $D\subseteq G$ is called {\em $(U,K)$-Delone set} or just {\em Delone set}. The set of all $(U,K)$-Delone sets is denoted by $\Del$.
\end{definition}

Classically, Delone sets are introduced by using a left-invariant metric on the group $G$. According to \cite[Proposition~2.2, Lemma~2.3]{BjHa16}, these notions coincide.

\medskip

Clearly, every element in $\Del$ is a closed subset of $G$ since it is $U$-left uniformly discrete. Thus, $\Del\subseteq\cs(G)$ is a compact, second countable Hausdorff space endowed with the induced Chabauty-Fell topology, cf.\@ \cite{Moo97,Sch00,BaLe04,BjHaPo16}.
Note that due to compactness, we have $\ks(\Del) = \cs(\Del)$. In order to deal with discrete Schr\"odinger operators associated with Delone sets, it is useful to consider $\DelN\subseteq\cs(G)$, the set of Delone sets containing the neutral element $e$. This space is also compact, second countable and Hausdorff in the induced Chabauty-Fell topology.

\medskip

The group $G$ acts naturally by translation homeomorphisms on the space $\Del$, cf.\@ Proposition~\ref{prop:ActClCon}. In order to speak about dynamical systems arising from Delone set, we need the concept of invariant subsets of $\Del$ and $\DelN$. 

\begin{definition}
\label{def:Inv}
Let $U\subseteq G$ be open and $K\subseteq G$ be compact.
\begin{itemize}
\item A subset $\Omega\subseteq\Del$ is called {\em $G$-invariant} 
if $G\cdot D\subseteq \Omega$ for every $D\in\Omega$. The set of all $G$-invariant, 
closed, non-empty subsets of $\Del$ is denoted by $\inv$.
\item A subset $\Tt\subseteq\DelN$ is called {\em invariant} if $\{x^{-1}L\,|\, x\in L\}\subseteq \Tt$ 
for each $L\in \Tt$. The set of all invariant, closed, non-empty subsets of $\DelN$ is denoted by $\invN$.
\end{itemize}
\end{definition}

By using again the Chabauty-Fell topology, the sets $\inv$ and $\invN$ become topological spaces that are the fundamental objects of this work.

\begin{proposition}[\cite{BeckusThesis16,BeBeNi17}]
\label{prop:InvClos}
Let $U\subseteq G$ be open and $K\subseteq G$ be compact. The space $\inv\subseteq\ks(\Del)$ and $\invN\subseteq\ks(\DelN)$ are compact, second countable and Hausdorff in the Chabauty-Fell topology.
\end{proposition}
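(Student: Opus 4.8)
The plan is to reduce everything to the observation that, since $\Del$ and $\DelN$ are themselves compact and metrizable, the Chabauty--Fell hyperspaces $\ks(\Del)=\cs(\Del)$ and $\ks(\DelN)=\cs(\DelN)$ are compact, second countable and Hausdorff (indeed metrizable) by Fell's theorem \cite{Fel62,Bee93}. Second countability and the Hausdorff property are inherited by every subspace, so the only genuine point is to show that $\inv$ is a \emph{closed} subset of $\ks(\Del)$ and that $\invN$ is a closed subset of $\ks(\DelN)$; compactness then follows since a closed subset of a compact space is compact. Because all spaces in sight are metrizable, I would verify closedness sequentially, using that Chabauty--Fell convergence $A_n\to A$ in the hyperspace of a compact metric space coincides with Kuratowski convergence. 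I record the two directions I will use repeatedly: the \emph{lower} inclusion, that every point of the limit $A$ is a limit of points $a_n\in A_n$; and the \emph{upper} inclusion, that if a subsequence $a_{n_k}\in A_{n_k}$ converges, then its limit lies in $A$.

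For $\inv$, let $\Omega_n\to\Omega$ in $\ks(\Del)$ with each $\Omega_n$ non-empty, closed and $G$-invariant; I must show the same for $\Omega$. Non-emptiness is immediate: choosing $D_n\in\Omega_n$ and using compactness of $\Del$, a convergent subsequence $D_{n_k}\to D$ exists, and the upper inclusion forces $D\in\Omega$. For invariance, fix $D\in\Omega$ and $g\in G$. The lower inclusion provides $D_n\in\Omega_n$ with $D_n\to D$ in $\Del$; since translation by $g$ is a homeomorphism of $\Del$ (Proposition~\ref{prop:ActClCon}), we have $g\cdot D_n\to g\cdot D$, while $g\cdot D_n\in\Omega_n$ by invariance of $\Omega_n$. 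The upper inclusion then yields $g\cdot D\in\Omega$, so $\Omega\in\inv$ and $\inv$ is closed.

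The argument for $\invN$ is structurally identical, the only new feature being that the invariance condition $\{x^{-1}L\mid x\in L\}\subseteq\Tt$ translates by elements drawn from the set $L$ itself. Let $\Tt_n\to\Tt$ in $\ks(\DelN)$ with each $\Tt_n$ invariant, fix $L\in\Tt$ and $x\in L$. The lower inclusion at the hyperspace level gives $L_n\in\Tt_n$ with $L_n\to L$ in $\DelN$, that is, Kuratowski convergence of these subsets of $G$; applying the lower inclusion once more at the level of $\cs(G)$ to the point $x\in L$ produces $x_n\in L_n$ with $x_n\to x$ in $G$. By invariance of $\Tt_n$ we have $x_n^{-1}L_n\in\Tt_n$, and the joint continuity of the translation action $G\times\cs(G)\to\cs(G)$, $(g,A)\mapsto gA$, gives $x_n^{-1}L_n\to x^{-1}L$. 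The upper inclusion then places $x^{-1}L\in\Tt$; since $e=x^{-1}x\in x^{-1}L$, the limit indeed lies in $\DelN$, and non-emptiness is preserved exactly as before. Hence $\invN$ is closed.

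The step that requires the most care is the joint continuity of $(g,A)\mapsto gA$ used in the transversal case: here the translating element $x_n$ varies together with $L_n$, so continuity for a fixed group element does not suffice. I would verify it directly from the Kuratowski description, using continuity of multiplication and inversion in $G$: for the upper inclusion, if $y_{n_k}\in g_{n_k}A_{n_k}$ with $y_{n_k}\to y$ and $g_{n_k}\to g$, then $g_{n_k}^{-1}y_{n_k}\in A_{n_k}$ converges to $g^{-1}y\in A$, whence $y\in gA$; the lower inclusion is dual. Once this is in hand, the remaining work is purely a matter of threading the lower and upper inclusions through the two levels of the construction, and no further estimates are needed.
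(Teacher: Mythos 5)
Your proof is correct. Note that the paper does not actually prove this proposition in-text---it is imported from the cited references---so there is no internal argument to compare against line by line; what you give is a valid self-contained substitute. Your reduction is the natural one: compactness, second countability and the Hausdorff property of $\ks(\Del)$ and $\ks(\DelN)$ come for free from Fell's theorem, so everything rests on closedness of $\inv$ and $\invN$, which you verify sequentially via the Kuratowski description of Chabauty--Fell convergence. The two points that genuinely need care in the transversal case are exactly the ones you isolate: the two-level use of the lower inclusion (first producing $L_n\in\Tt_n$ with $L_n\to L$, then $x_n\in L_n$ with $x_n\to x$), and the \emph{joint} continuity of $(g,A)\mapsto gA$ on $G\times\cs(G)$, which cannot be replaced by continuity for a fixed group element since $x_n$ varies with $L_n$. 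Two minor remarks. First, you state the Fell--Kuratowski equivalence only for hyperspaces of compact metric spaces, but you also apply the lower and upper inclusions to convergence in $\cs(G)$ with $G$ merely locally compact and second countable; the equivalence does hold there as well, but that instance deserves an explicit word. Second, the joint continuity you sketch is the $\cs(G)$-analogue of the paper's Proposition~\ref{prop:ActClCon}, which is stated for compact $X$, so your direct Kuratowski verification is indeed needed rather than a citation of that proposition. Stylistically, your sequential argument parallels step~(a) of the paper's proof of Theorem~\ref{thm:TransHullHomeo}, whereas the cited sources and step~(d) of that proof work directly with the basis sets $\vs(K,\os)$; the basis approach avoids any appeal to metrizability, while yours is shorter and arguably more transparent, and both are rigorous in the present second countable setting.
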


We will be mainly concerned with $G$-invariant sets arising from one element $D \in\Del$. We define the {\em hull} of $D \in\Del$ as the Chabauty-Fell closure
\begin{eqnarray*}
\Hh^D \;
	:= \; \overline{\big\{ g D \,|\, g \in G\big\}} \;
	\in \; \inv
	\,.
\end{eqnarray*}

Such $G$-invariant closed subsets of $\Del$ are called {\em topologically transitive}. The set
\begin{eqnarray*}
\Tt^D \;
	:= \; \overline{
		\left\{\left.
			x^{-1}D
			\,\right|\, 
			x\in D
		\right\}} \;
	= \; \left\{\left.
			D'\in\Hh^D
			\,\right|\, 
			e\in D'
		\right\} \;
	\in \; \invN	
\end{eqnarray*}
is called {\em transversal of $D$}. In order to prove the Theorem~\ref{thm:TransHullHomeo} we need the following lemma.

\begin{lemma}
\label{lem-Hilfs2}
Let $\Ff\subseteq\Del$ be closed and $\Tt\in\invN$. Then the following assertions are equivalent.
\begin{itemize}
\item[(i)] The intersection $\Ff\cap G\cdot \Tt$ is empty.
\item[(ii)] The intersection $\Ff\cap K^{-1}\cdot \Tt$ is empty. 
\end{itemize} 
\end{lemma}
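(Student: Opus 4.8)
The plan is to reduce the claimed equivalence to the single set identity $G\cdot\Tt = K^{-1}\cdot\Tt$, from which both implications follow immediately: once the two sets coincide, their intersections with $\Ff$ are literally the same, so one is empty iff the other is. The inclusion $K^{-1}\cdot\Tt \subseteq G\cdot\Tt$ is trivial since $K^{-1}\subseteq G$, and this alone already yields the direction (i)$\Rightarrow$(ii), because any element of $\Ff\cap K^{-1}\cdot\Tt$ then also lies in $\Ff\cap G\cdot\Tt$. The substantive content is therefore the reverse inclusion $G\cdot\Tt\subseteq K^{-1}\cdot\Tt$, which supplies (ii)$\Rightarrow$(i).

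To prove this inclusion I would take an arbitrary $D\in G\cdot\Tt$ and write $D = gL$ with $g\in G$ and $L\in\Tt$. Since $L\in\DelN$ we have $e\in L$, so that $g=ge\in D$, and more generally every point of $D$ has the form $g\ell$ with $\ell\in L$. The first key observation is that translating $D$ so as to move one of its points onto the neutral element returns an element of $\Tt$: for $d = g\ell\in D$ one computes $d^{-1}D = \ell^{-1}g^{-1}gL = \ell^{-1}L$, and since $\ell\in L$ the invariance of $\Tt$ from Definition~\ref{def:Inv} gives $\ell^{-1}L\in\Tt$. Hence $d^{-1}D\in\Tt$, i.e. $D\in d\cdot\Tt$, for every $d\in D$.

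The second ingredient is to arrange that the translating element lies in $K^{-1}$. Here I would invoke that $D\in\Del$ is $K$-left relatively dense, so $D\cdot K = G$; applying this to the neutral element produces $d\in D$ and $k\in K$ with $dk=e$, whence $d = k^{-1}\in D\cap K^{-1}$. Combining the two steps for this particular $d$, we obtain $D = d\,(d^{-1}D)\in d\cdot\Tt\subseteq K^{-1}\cdot\Tt$, which establishes $G\cdot\Tt\subseteq K^{-1}\cdot\Tt$ and hence the desired equality.

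I expect no serious obstacle; the argument is elementary once these two observations are in place. The step requiring the most care is the verification $d^{-1}D\in\Tt$, as this is precisely where the invariance hypothesis on $\Tt\in\invN$ (rather than mere membership of the translate in $\DelN$) is used: one must ensure that translating by a point of $D$ genuinely returns an element of the given transversal $\Tt$ and not merely of $\Tt^D$. I note in passing that closedness of $\Ff$ is not actually needed, since the equivalence follows from the set identity above.
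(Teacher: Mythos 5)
Your proposal is correct and follows essentially the same route as the paper: the paper's proof of (ii)$\Rightarrow$(i) likewise writes $g = h^{-1}x^{-1}$ with $x\in L$, $h\in K$ via $K$-left relative denseness and then uses invariance of $\Tt$ to conclude $gL = h^{-1}(x^{-1}L)\in K^{-1}\cdot\Tt$, which is exactly your inclusion $G\cdot\Tt\subseteq K^{-1}\cdot\Tt$ in a slightly different packaging. Your explicit observation that the argument yields the set identity $G\cdot\Tt=K^{-1}\cdot\Tt$ (so closedness of $\Ff$ is irrelevant) is accurate but does not change the substance.
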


\begin{proof}
The implication (i)$\Rightarrow$(ii) is clear as $K^{-1}\subseteq G$. Thus, it suffices to show the converse implication. To this end,
let $g\in G$ and $L\in \Tt$. The set $L$ is $K$-left relatively dense (recall this means $L\cdot K=G$), and so $K^{-1}\cdot L^{-1}=(L\cdot K)^{-1} = G^{-1}=G$. Consequently, there is an $x\in L$ and an $h\in K$ such that $h^{-1}x^{-1}=g$. Set $L':=x^{-1}L$. Since $x\in L$, we have $L'\in \Tt$ by the invariance of $\Tt$. Hence, we get $gL=h^{-1}L'\not\in \Ff$, since $\Ff\cap K^{-1}\cdot \Tt=\emptyset$. 
As $g\in G$ and $L\in \Tt$ were arbitrarily chosen, the intersection $\Ff\cap G\cdot \Tt$ is empty. 
\end{proof}

\medskip

{\bf Proof of Theorem~\ref{thm:TransHullHomeo}.} We show  $\Phi$ is (a) well-defined, (b) injective, (c) surjective and (d) continuous. Then \cite[Satz~8.11]{Querenburg2001} implies that $\Phi$ is a homeomorphism, since $\invN$ is compact and $\inv$ is Hausdorff.

(a): Let $\Tt\in\invN$. By definition, $\Phi(\Tt)=G\cdot \Tt\subseteq\Del$ is invariant and non-empty. Thus, it suffices to prove that $G\cdot \Tt$ is Chabauty-Fell closed. Let $g_n L_n\in G\cdot \Tt,\, n\in\NN,$ be convergent to $L\in\Del$. Since $L_n$ is $K$-left relatively dense, we get $K^{-1}\cdot L_n^{-1}=(L_n\cdot K)^{-1}=G^{-1}=G$. Thus, for each $n\in\NN$, there are $x_n\in L_n$ and $h_n\in K$ such that $g_n=h_n^{-1}x_n^{-1}$. By compactness of $K$, there is no loss of generality in assuming that $(h_n)$ converges to $h$ (otherwise pass to a subsequence). Consequently,
we obtain
$$
x_n^{-1}L_n \;
	= \; h_n \big( h_n^{-1}x_n^{-1} L_n\big) \;
	= \; \underbrace{h_n}_{\to h} \big( \underbrace{g_nL_n }_{\to L} \big)
	\underset{n\to\infty}{\longrightarrow} hL
$$
by using the continuity of the $G$-action on $\Del$, c.f. Proposition~\ref{prop:ActClCon}. Since $\Tt$ is closed, the above convergence shows that $h L \in \Tt$. We arrive at $L=h^{-1}(hL) \in G\cdot \Tt$ implying $hL \in G\cdot \Tt$.

(b): Let $\Tt_1, \Tt_2\in\invN$ be such that $\Phi(\Tt_1)=\Phi(\Tt_2)$. It suffices to show $\Tt_1\subseteq \Tt_2$. 
Consider $L\in \Tt_1$. Since $\Tt_1\subseteq \Phi(\Tt_1)=\Phi(\Tt_2)$, there is an element $g\in G$ and $L'\in \Tt_2$ 
such that $gL'=L$. Since $e\in L=gL'$, we have $g^{-1} \in L'$, and from the invariance of $\Tt_2$, we derive $L\in \Tt_2$.

(c): Let $\Omega\in\inv$. Define $\Tt(\Omega):=\{D\in\Omega\,|\, e\in D\}$. Note that $\Tt(\Omega)\subseteq\DelN$ is Chabauty-Fell closed. Additionally, $\Tt(\Omega)$ is invariant in $\DelN$, since $\Omega$ is $G$-invariant. Consequently, $T(\Omega)\in\invN$. We show $\Phi\big(\Tt(\Omega)\big)=\Omega$. Since $\Tt(\Omega)\subseteq\Omega$ and $\Omega$ is $G$-invariant, we have $\Phi\big(\Tt(\Omega)\big)\subseteq\Omega$. For the converse, let $D\in\Omega$. Then for $x\in D$, we have $e\in x^{-1}D$ and $x^{-1}D\in\Omega$, which yields $x^{-1}D\in \Tt(\Omega)$. Thus, $D=xx^{-1}D\in\Phi\big(\Tt(\Omega)\big)$ follows by definition, proving $\Omega\subseteq\Phi\big(\Tt(\Omega)\big)$. 

(d): Let $\us:=\us\big(\Ff,\{\Oo_1,\ldots,\Oo_n\}\big)\subseteq\inv$ be a Chabauty-Fell open set, 
where $\Ff\subseteq \Del$ is compact and the $\Oo_i \subseteq \Del$ are open.
 We intend to show that $\Phi^{-1}(\us)$ is open. It suffices to prove that there exists a Chabauty-Fell open neighborhood $\vs\subseteq\invN$ of $\Tt \in\invN$, which satisfies $\Phi(\Tt)\in\us$, such that $\Phi(\vs)\subseteq\us$. 
Let $\Tt\in\invN$ be so that $\Phi(\Tt)\in\us$. Thus, $\Ff\cap G\cdot \Tt=\emptyset$ and $\Oo_i\cap G\cdot \Tt\neq\emptyset$ for $i=1,\ldots,n$. Note that $\Ff$ is compact
as a subset of $\Del$, and so $K\times \Ff\subseteq G\times\Del$ is compact. Hence, $K\cdot \Ff\subseteq\Del$ is compact since the action of $G$ on $\ks(\Del)=\cs(\Del)$ is jointly continuous. In the same way, one obtains that $G\cdot \Oo_i\subseteq\Del$ is open. Now define the Chabauty-Fell open set $\vs:=\us\big(K\cdot \Ff,\{G\cdot \Oo_1,\ldots,G\cdot \Oo_n\}\big)\cap\invN\subseteq\invN$.
According to Lemma~\ref{lem-Hilfs2}, $\Ff\cap G\cdot \Tt=\emptyset$ is equivalent to $\Ff\cap K^{-1}\cdot \Tt=\emptyset$ being equivalent to $K\cdot \Ff\cap \Tt=\emptyset$. Additionally, $\Oo_i\cap G\cdot \Tt$ is non-empty implying $\Tt\cap G\cdot \Oo_i\neq\emptyset$ for $i=1,\ldots,n$. Consequently, $\vs$ is a Chabauty-Fell neighborhood of $\Tt$. It is left to show that $\Phi(\vs)\subseteq\us$: Let $\Tt'\in\vs$. Then Lemma~\ref{lem-Hilfs2} implies $\Ff\cap G\cdot \Tt'=\emptyset$. Furthermore, $\Tt'\cap G\cdot \Oo_i\neq\emptyset$ leads to $\Oo_i\cap G\cdot \Tt'\neq\emptyset$. Altogether, $\Phi(\Tt')\in\us$ follows finishing the proof. 
\hfill$\Box$ 

\medskip

A crucial ingredient for the convergence of the spectral quantities like the density of states and the autocorrelation is that the corresponding integrands are continuous. The following assertion provides this continuity.

\begin{lemma}
\label{lem:SumDCont}
Let $G$ be a lcsc group and $\Del$ be the compact space of Delone sets in $G$ with $e\in U\subseteq G$ open and $K\subseteq G$ compact. For every $\varphi\in\Cc_c(\Del\times G)$, the map
$$
\Phi(\varphi):\Del\to \CC\,,\;
	\Phi(\varphi)(D) \; := \; \sum_{x\in D} \varphi(D,x)
	\,,
$$
is continuous in the Chabauty-Fell topology of $\Del$.
\end{lemma}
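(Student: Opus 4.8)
The plan is to reduce the a priori infinite sum to a finite one and then to establish a local bijection of points under Chabauty--Fell convergence. First I would record that $\varphi$ only sees finitely many points: writing $L$ for the projection of the compact set $\supp(\varphi)$ to $G$, the set $L$ is compact and $\varphi(D',x)=0$ whenever $x\notin L$. Since every $D\in\Del$ is $U$-left uniformly discrete and $e\in U$, a finite subcover $L\subseteq\bigcup_{j=1}^m g_jU$ by left translates of $U$ gives $\#(D\cap L)\leq m$ with $m$ independent of $D$; hence $\Phi(\varphi)(D)=\sum_{x\in D\cap L}\varphi(D,x)$ is a finite, uniformly bounded sum. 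As $\Del$ is metrizable, it suffices to prove sequential continuity, so I fix $D_n\to D$ in the Chabauty--Fell topology and aim to show $\Phi(\varphi)(D_n)\to\Phi(\varphi)(D)$.

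Next I would set up the local structure. Choose a compact set $M\subseteq G$ with $L\subseteq\operatorname{int}(M)$ and $D\cap\partial M=\emptyset$; such $M$ exists because $D$ is locally finite and $G$ is metrizable. Enumerate the finitely many points $D\cap M=\{y_1,\dots,y_k\}$, all lying in $\operatorname{int}(M)$, and pick an open neighborhood $V\subseteq U$ of $e$ small enough that the translates $O_i:=y_iV$ are pairwise disjoint and contained in $\operatorname{int}(M)$. Uniform discreteness then yields $\#(D'\cap O_i)\leq\#(D'\cap y_iU)\leq 1$ for \emph{every} Delone set $D'$, which is the control I will need on the approximating sets.

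The heart of the argument, and the step I expect to be the main obstacle, is the local convergence claim: for all large $n$ one has $D_n\cap M=\{y_1^n,\dots,y_k^n\}$ with $y_i^n\in O_i$ and $y_i^n\to y_i$. I would prove this with the two families of subbasic Chabauty--Fell open sets. The compact set $K':=M\setminus\bigcup_i O_i$ is disjoint from $D$, so the ``miss'' set $\{Y:Y\cap K'=\emptyset\}$ is an open neighborhood of $D$; thus $D_n\cap K'=\emptyset$ eventually, forcing $D_n\cap M\subseteq\bigcup_i O_i$. Simultaneously each ``hit'' set $\{Y:Y\cap O_i\neq\emptyset\}$ is an open neighborhood of $D$ (as $y_i\in O_i$), so eventually $D_n\cap O_i\neq\emptyset$; combined with the upper bound $\#(D_n\cap O_i)\leq 1$ this produces a single point $y_i^n$ in each $O_i$ and hence the bijection. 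Convergence $y_i^n\to y_i$ follows by rerunning the hit-set argument with $O_i$ replaced by $O_i\cap W$ for a shrinking neighborhood basis $W$ of $y_i$: the unique point of $D_n\cap O_i$ is then forced into $W$. This is exactly where uniform discreteness is indispensable, since without it the points of $D_n$ could accumulate and no such bijection would exist.

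Finally I would assemble the estimate. Since $L\subseteq M$ and $\varphi(D',\cdot)$ vanishes off $L$, for large $n$ the sums collapse to $\Phi(\varphi)(D_n)=\sum_{i=1}^k\varphi(D_n,y_i^n)$ and $\Phi(\varphi)(D)=\sum_{i=1}^k\varphi(D,y_i)$ (any surviving terms with $y_i^n\notin L$ contribute zero). Because $(D_n,y_i^n)\to(D,y_i)$ in $\Del\times G$ for each fixed $i$ and $\varphi$ is continuous, every summand converges, and as $k$ is fixed the finite sum converges, giving $\Phi(\varphi)(D_n)\to\Phi(\varphi)(D)$. This establishes Chabauty--Fell continuity of $\Phi(\varphi)$.
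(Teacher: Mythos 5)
Your proof is correct and follows essentially the same strategy as the paper's: reduce to the finite set of points of $D$ in the projected support of $\varphi$, use uniform discreteness together with hit-and-miss Chabauty--Fell neighborhoods to match each such point with exactly one nearby point of the approximating Delone set while excluding all others, and conclude by continuity of $\varphi$. The only differences are cosmetic (a sequential formulation with an explicit point bijection versus the paper's direct $\varepsilon$-estimate on a neighborhood).
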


\begin{proof}
Let $D\in\Del$ and $\varepsilon>0$. Denote by $F$ the compact set $\pi_G(\supp(\varphi))\subseteq G$ where $\pi_G:\Del\times G\to G$ denotes the projection on $G$. Since $D$ is $U$-uniformly discrete, we have $D\cap F = \{x_1,\ldots, x_l\}$ and so $\Phi(\varphi)(D)=\sum_{i=1}^l \varphi(D,x_i)$. Due to continuity of $\varphi$, there exists an open neighborhood $\vs_\varepsilon$ of $D$ and open neighborhoods $U_\varepsilon^i\subseteq x_iU$ of $x_i$ for $i=1,\ldots, l$ such that
$$
\big| \varphi(D',y)-\varphi(D,x_i) \big|
	< \frac{\varepsilon}{l}
	\,,\qquad
	(D',y)\in \vs_\varepsilon\times U_\varepsilon^i
	\,.
$$
Define the Chabauty-Fell open neighborhood $\us:=\vs_\varepsilon\cap\us\big(C,\os)$ for the compact set $C:=F\setminus\bigcup_{i=1}^l U_\varepsilon^i\subseteq G$ and the finite family of open sets $\os:=\{U_\varepsilon^1,\ldots,U_\varepsilon^n\}$. 

Let $D'\in\us$. Since $U_\varepsilon^i\subseteq x_iU$ and $D'$ is $U$-uniformly discrete, the intersection $D'\cap U_\varepsilon^i=\{y_i\}$ contains exactly one element. Thus, we have $D'\cap F\subseteq\{y_1,\ldots,y_l\}$ by construction of $\us$. Since $\varphi(D',y)=0$ if $y\in D'\cap F^c$, the identity $\Phi(\varphi)(D')=\sum_{i=1}^l \varphi(D',y_i)$ follows. Hence,
$$
\left|
	\Phi(\varphi)(D)-\Phi(\varphi)(D')
\right|
	\leq \sum_{i=1}^l \underbrace{|\varphi(D,x_i)-\varphi(D',y_i)|}_{\leq\varepsilon/l}
	\leq \varepsilon
	\,,\qquad
	D'\in\us\,,
$$
as $(D',y_i)\in\vs_\varepsilon\times U_\varepsilon^i$ for $i=1,\ldots,l$. Since $\varepsilon>0$ was arbitrary, this shows the desired continuity of $\Phi(\varphi)$.
\end{proof}


\subsection{Spaces of lattices} \label{ssec:spacLat}
In this section, we provide some elementary and for the most
part well-known facts about dynamics on lattices. 
Note that those constitute a specific class of Delone dynamical systems. 
Precisely, we study 
continuous actions of a lcsc unimodular group $G$ on homogeneous spaces of the form $G/\Gamma$, where $\Gamma$ is a cocompact lattice in $G$. 

\medskip

Recall that a closed, discrete subgroup $\Gamma< G$ is called {\em lattice} if the quotient $G/\Gamma$ admits a $G$-invariant probability measure $m_Y$. 
We say a lattice is {\em uniform} or {\em cocompact} if $G/\Gamma$ is compact. 
It is not hard to see that in the cocompact situation, there can be at most one invariant probability measure $m_Y$ on $G/\Gamma$. Accordingly, we refer to $m_Y$ as the {\em Haar measure} on the quotient space, even if the latter does not need to be a group. 

\medskip

The set of all lattices in $G$ is denoted by $\Lat$. 
Recall that if a lcsc group admits a lattice, then it must be {\em unimodular}, i.e.\@ the left Haar measure and the right Haar measure coincide on $G$, cf.\@ e.g.\@ \cite[Theorem~9.1.6]{DE09}. The space of cocompact closed subgroups of $G$ and the space of discrete subgroups are open in the Chabauty-Fell topology \cite{Bou63,Ole73,BrHaKl09}. 
The space of lattices does not need to be open in general. However, there exist examples with open lattice spaces $\Lat$ \cite{Rag72}. 
The key to this issue are uniform bounds on the discreteness as well as on the compactness parameters of the lattices. 
With bounds on those parameters, one obtains subspaces which are compact in the Chabauty-Fell topology. For Euclidean lattices, 
this phenomena is known as the {\em Mahler criterion}, cf.\@ \cite{Mah46},
which was generalized to semisimple Lie groups by Mumford \cite{Mum71}.
In our situation, the space $\Lat\cap \Del$ equipped with the induced Chabauty-Fell topology is a compact, second countable Hausdorff space, c.f. \cite{Cha50}.

\begin{lemma}
\label{lem:LatDel}
Every cocompact lattice $\Gamma< G$ is a Delone set. Furthermore, the transversal satisfies $\Tt^\Gamma=\{\Gamma\}$ and the hull $\Hh^\Gamma$ equals the orbit $G\cdot\{\Gamma\}$.
\end{lemma}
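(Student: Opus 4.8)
The plan is to verify the three assertions in turn, drawing on the group structure of $\Gamma$ and on cocompactness. First I would establish that $\Gamma \in \Del$ for a suitable pair $(U,K)$. For the $U$-left uniform discreteness, discreteness of $\Gamma$ supplies an open $W \ni e$ with $W \cap \Gamma = \{e\}$; choosing a symmetric open neighborhood $U$ of $e$ with $U^{-1}U \subseteq W$ (possible by continuity of multiplication at $(e,e)$), one checks that any two points $g u_1 = \gamma_1$ and $g u_2 = \gamma_2$ of $gU \cap \Gamma$ satisfy $\gamma_1^{-1}\gamma_2 \in U^{-1}U \cap \Gamma = \{e\}$, whence $\#(gU \cap \Gamma) \leq 1$ for every $g \in G$. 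For the $K$-left relative density I would invoke cocompactness: the quotient map $p\colon G \to \Gamma\backslash G$ is continuous and open, and $\Gamma\backslash G$ is compact (being homeomorphic to $G/\Gamma$ via inversion). Covering $\Gamma\backslash G$ by the open sets $p(xV)$, $x \in G$, for a fixed relatively compact open $V \ni e$, compactness yields a finite subcover $p(x_1 V),\dots,p(x_n V)$; then $K := \overline{\bigcup_{i} x_i V}$ is compact and $\Gamma \cdot K = G$. Thus $\Gamma$ is a $(U,K)$-Delone set in the sense of Definition~\ref{def:LSepRDen}, and $e \in \Gamma$ gives $\Gamma \in \DelN$.

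The transversal is then immediate: since $\Gamma$ is a subgroup, $x^{-1}\Gamma = \Gamma$ for every $x \in \Gamma$, so $\{x^{-1}\Gamma \mid x \in \Gamma\} = \{\Gamma\}$, and taking its (trivial) Chabauty-Fell closure gives $\Tt^\Gamma = \{\Gamma\}$. For the hull I would feed this into Theorem~\ref{thm:TransHullHomeo}: the surjectivity step in its proof shows $\Phi(\Tt^D) = \Hh^D$ for every $D$, so that $\Hh^\Gamma = \Phi(\Tt^\Gamma) = G \cdot \{\Gamma\} = \{g\Gamma \mid g \in G\}$, which is exactly the orbit. Alternatively, and without appealing to Theorem~\ref{thm:TransHullHomeo}, one can argue directly: the orbit map $g \mapsto g\Gamma$ is continuous by Proposition~\ref{prop:ActClCon} and factors through an injective continuous map $G/\Gamma \to \Del$; since $G/\Gamma$ is compact and $\Del$ is Hausdorff, this map is a homeomorphism onto its image, so $G \cdot \{\Gamma\}$ is already compact, hence Chabauty-Fell closed, and therefore equals its closure $\Hh^\Gamma$.

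The only genuinely substantive points are the relative density and the closedness of the orbit, and both are precisely where cocompactness enters: discreteness alone gives no uniform bound on the gaps, so the finite-subcover argument producing the compact $K$ is essential, and the compact-to-Hausdorff argument (equivalently, the automatic closedness of $G \cdot \Tt$ from part~(a) of the proof of Theorem~\ref{thm:TransHullHomeo}) is what forces the hull to collapse onto the orbit. These two topological steps are the places where I would be careful; the uniform discreteness and the identity $\Tt^\Gamma = \{\Gamma\}$ are formal consequences of $\Gamma$ being a discrete subgroup and require no real work.
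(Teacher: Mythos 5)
Your proof is correct, and it reaches the same three conclusions, but by a noticeably more self-contained route than the paper. The paper's proof outsources both Delone properties to \cite{BjHa16}: uniform discreteness follows from $\Gamma^{-1}\Gamma=\Gamma$ being discrete via \cite[Proposition~2.2]{BjHa16}, and relative denseness is obtained \emph{after} the hull computation, by observing that cocompactness forces $\Hh^\Gamma = G\cdot\{\Gamma\}$, hence $\emptyset\notin\Hh^\Gamma$, and then invoking \cite[Proposition~4.4]{BjHa16}. You instead prove uniform discreteness by the standard $U^{-1}U\subseteq W$ trick and relative denseness by a direct finite-subcover argument on the compact quotient, producing an explicit compact $K$ with $\Gamma\cdot K=G$. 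What your approach buys is logical cleanliness: you have $\Gamma\in\Del$ \emph{before} you ever form $\Hh^\Gamma$, whereas the paper's order of deduction implicitly takes the orbit closure in $\cs(G)$ first and only afterwards concludes membership in $\Del$. What the paper's route buys is brevity and the reusable observation that $\emptyset\notin\Hh^D$ characterizes relative denseness. For the hull itself the two arguments agree in substance (cocompactness is what closes the orbit); your second, direct justification --- the injective continuous map $G/\Gamma\to\Del$ from a compact space to a Hausdorff space has closed image --- supplies the detail the paper leaves as "by cocompactness", and your first justification via the surjectivity step of Theorem~\ref{thm:TransHullHomeo} is also legitimate since that theorem does not depend on this lemma. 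The identity $\Tt^\Gamma=\{\Gamma\}$ is handled identically in both.
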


\begin{proof}
Since $\Gamma^{-1}\Gamma=\Gamma$ and $\Gamma$ is discrete, \cite[Proposition~2.2]{BjHa16} implies that $\Gamma$ is uniformly discrete. Additionally, $\Gamma^{-1}\Gamma=\Gamma$ yields $\Tt^\Gamma=\{\Gamma\}$. By cocompactness, the hull $\Hh^\Gamma$ is equal to $G\cdot\{\Gamma\}$. 
Hence, $\emptyset\not\in\Hh^\Gamma$, which in turn implies that $\Gamma$ is relatively dense \cite[Proposition~4.4]{BjHa16}.
\end{proof}

\medskip

The following proposition is probably known. 
Similar continuity results play a significant role in the theory of
invariant random subgroups (IRS). For a nice survey on IRS, we refer to \cite{Gel15}. 
In the context of this work, we give a proof via the abstract approximation Theorem~\ref{thm:mainapprox}. 
This underscores our point of view of interpreting convergence of lattices
and equivalently convergence of their hulls as a particular 
case of convergence of Delone dynamical systems. 
However, this result becomes important when 
we study continuity behavior of spectral quantities over model sets with respect to
small changes of the projection lattice, cf.\@ Section~\ref{sec:CP-schemes}.

\begin{proposition}
\label{prop:weakastlattice}
Let $\Gamma_n,\Gamma\in\Lat	 \cap \Del$ for $n\in\NN$. Then $\Gamma_n\to\Gamma$ in $\Lat\cap \Del$ if and only if $\{\Gamma_n\}\to\{\Gamma\}$ in $\invN$. In particular, we have in this situation
$$
\lim_{n \to \infty} m_n = m_Y
$$
in the weak-$\ast$-topology on $\Del$, where $m_n$, respectively $m_Y$ denote the
Haar measures on $G/\Gamma_n$, respectively on $G/\Gamma$. 
\end{proposition}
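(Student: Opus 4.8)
The plan is to separate the two assertions: the topological equivalence is essentially point-set topology, while the convergence of Haar measures is an application of the abstract principle in Theorem~\ref{thm:mainapprox}.

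For the equivalence I would begin with Lemma~\ref{lem:LatDel}, which identifies the transversals of the lattices as the singletons $\Tt^{\Gamma_n}=\{\Gamma_n\}$ and $\Tt^\Gamma=\{\Gamma\}$; in particular these singletons lie in $\invN$. Since $\Lat\cap\Del\subseteq\DelN$ and $\invN\subseteq\ks(\DelN)$ both carry the subspace Chabauty-Fell topology, it suffices to compare convergence of the points $\Gamma_n\to\Gamma$ in $\DelN$ with convergence of the singletons $\{\Gamma_n\}\to\{\Gamma\}$ in $\ks(\DelN)$. The core observation is that over a Hausdorff space the assignment $z\mapsto\{z\}$ preserves and reflects convergence: a singleton $\{z\}$ lies in a basic open set $\vs(K,\os)$ exactly when $z\notin K$ and $z\in O$ for every $O\in\os$; the conditions on the open family $\os$ encode $\Gamma_n\to\Gamma$, while the compact-avoidance condition on $K$ becomes automatic once $\Gamma_n\to\Gamma$, because a point can be separated from a disjoint compact set. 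Chaining these identifications yields the equivalence.

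For the measure convergence I would transport everything into the common ambient space $\Del$. By Lemma~\ref{lem:LatDel} the hull $\Hh^\Gamma$ equals the orbit $G\cdot\{\Gamma\}$, and cocompactness makes the orbit map $G/\Gamma\to\Hh^\Gamma$ a continuous $G$-equivariant bijection from a compact space into a Hausdorff space, hence a homeomorphism; the same applies to each $\Gamma_n$. Pushing $m_Y$ and each $m_n$ forward along these identifications produces $G$-invariant probability measures supported on $\Hh^\Gamma$ and on $\Hh^{\Gamma_n}$, and I read the claimed limit $\lim_n m_n=m_Y$ as weak-$*$ convergence of these pushforwards in $\Mm(\Del)$. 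Since a cocompact quotient carries at most one invariant probability measure (as recorded in this subsection, or by Theorem~\ref{thm:LatCom}), the system $(\Hh^\Gamma,G)\cong(G/\Gamma,G)$ is uniquely ergodic and the pushforward of $m_Y$ is its unique invariant measure.

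Finally, the first part together with Theorem~\ref{thm:TransHullHomeo} turns $\Gamma_n\to\Gamma$ into $\Tt^{\Gamma_n}\to\Tt^\Gamma$ in $\invN$ and hence $\Hh^{\Gamma_n}\to\Hh^\Gamma$ in $\ks(\Del)$. I then apply Theorem~\ref{thm:mainapprox} with ambient space $X=\Del$, subsystems $Y_n=\Hh^{\Gamma_n}$ carrying the pushforward of $m_n$, and uniquely ergodic limit $Y=\Hh^\Gamma$ carrying the pushforward of $m_Y$; the theorem then yields the desired weak-$*$ convergence. I expect the main obstacle to be organizational rather than conceptual: the delicate point is setting up the identification of the quotient Haar measures with invariant measures on the hulls inside $\Del$ so that the hypotheses of Theorem~\ref{thm:mainapprox} are met verbatim, after which unique ergodicity makes the convergence immediate.
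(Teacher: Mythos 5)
Your proposal is correct and follows essentially the same route as the paper: the equivalence is proven by the same direct manipulation of basic Chabauty--Fell neighborhoods of singletons (taking $\vs(\emptyset,\{\us\})$ for one direction and intersecting the open family with the complement of the compact set for the other), and the measure convergence is obtained exactly as in the paper by invoking unique ergodicity of $G/\Gamma$ together with Theorem~\ref{thm:mainapprox}. The only difference is that you spell out the identification of $G/\Gamma_n$ with the hulls inside $\Del$ and the passage through Theorem~\ref{thm:TransHullHomeo}, which the paper leaves implicit in its one-sentence conclusion.
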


\begin{proof}
Suppose $\{\Gamma_n\}\to\{\Gamma\}$ in $\invN$ and let $\us$ be an open neighborhood of $\Gamma$ in $\Del$. Then $\vs(\emptyset,\{\us\})$ is an open neighborhood of $\{\Gamma\}$. Hence, there is an $n_0\in\NN$ such that $\{\Gamma_n\}\in\vs(\emptyset,\{\us\})$ follows for $n\geq n_0$. Thus, $\Gamma_n\in\us$ is derived for $n\geq n_0$ implying $\Gamma_n\to\Gamma$.\\
Now suppose $\Gamma_n\to\Gamma$ in $\Del$ and let $\vs(F,\os)$ be an open neighborhood of $\{\Gamma\}$. Since $\{\Gamma\}\in\vs(F,\os)$, the set $\us:=\bigcap_{O\in\os} O \cap \Del\setminus F$ is an open neighborhood of $\Gamma$ in $\Del$. Hence, there is an $n_0\in\NN$ such that $\Gamma_n\in\us$ for $n\geq n_0$ implying $\{\Gamma_n\}\in\vs(F,\os)$. Thus, the convergence $\{\Gamma_n\}\to\{\Gamma\}$ in $\invN$ follows.\\
Since $m_Y$ is the unique invariant probability measure on $G/\Gamma$, the claimed convergence of the measures follows from Theorem~\ref{thm:mainapprox}. 
\end{proof}

\medskip

Since $\Del$ is closed in the Chabauty-Fell topology we further derive the following. Denote by $\invN(\Lat)\subseteq\invN$ the set of all transversals associated with the elements of $\Lat \cap \Del$.

\medskip

{\bf Proof of Theorem~\ref{thm:LatCom}.}
Theorem~\ref{thm:TransHullHomeo} implies that $\inv(\Lat)$ and $\invN(\Lat)$ are homeomorphic if equipped with the Chabauty-Fell topology. According to Lemma~\ref{lem:LatDel}, every element in $\invN(\Lat)$ contains exactly one element. Define the map $\Psi:\Lat\cap\Del\to \invN(\Lat)$ by $\Gamma\mapsto\{\Gamma\}$ being bijective and continuous by Proposition~\ref{prop:weakastlattice}. Furthermore, $\invN(\Lat)\subseteq\invN$ is Hausdorff and $\Lat\cap\Del\subseteq\Del$ is compact in the corresponding induced Chabauty-Fell topologies. Thus, $\Psi$ is a homeomorphism, see e.g. \cite[Satz~8.11]{Querenburg2001}. Thus, $\inv(\Lat)$ and $\invN(\Lat)$ are compact, second countable and Hausdorff spaces. \hfill$\Box$


\section{Convergence of the autocorrelation} \label{sec:ConAC}

We introduce weighted Delone sets in Subsection~\ref{ssec:AC}. Their definition is motivated by weighted model sets that were studied in the abelian world, \cite{Baa01,BaMo04,LeRi07,BaakeGrimm13,Str14,RiSt15,RiSt17,BaHuSt17}. However, our point of view is slightly different since we will consider weighted model sets as subsets of the product space $G \times H$ without projecting them to $G$, where both $G,H$ are unimodular lcsc groups. The weights are given by compactly supported functions on $H$. In that respect, our approach is more closely related to the work \cite{KeRi16}.

\medskip

In Subsection~\ref{sec:Appr-AC}, we show the main assertion of this section that the autocorrelation converges in the weak-$\ast$ topology if the Delone dynamical systems converge in the Chabauty-Fell topology and the limiting dynamical system is uniquely ergodic, c.f. Theorem~\ref{thm:Appr-wgtAC} and Corollary~\ref{cor:Appr-AC}. The proof is an application of Theorem~\ref{thm:mainapprox}.



\subsection{The weighted autocorrelation for Delone sets} \label{ssec:AC}

Throughout the section $\Delr$ denotes the set of Delone sets with fixed parameters $U$ and $K$. The closed, $G\times H$-invariant subsets are denoted by $\IDelGH$. Whenever we only speak about Delone sets in $G$ we will specify. We denote by $\pi_G, \pi_H$
the continuous projections from $G \times H$ to $G$, respectively to $H$.

\begin{definition}[Weighted Delone set]
\label{def:WghtModSet}
A measurable, bounded function $\win:H\to[0,\infty)$ with compact support is called {\em window function}. If, in addition, $\win$ is continuous, we say $\win$ is a continuous window function. For a Delone set $D$ in $G \times H$, we call the tuple $(D,\win)$  {\em weighted Delone set}. If $D=\Gamma$ is a cocompact lattice, we call $(\Gamma,\win)$ a {\em weighted model set}.
\end{definition}

If $(D, \win)$ is a weighted Delone set, then every point $x \in D$ carries a weight given by the value $\win\big( \pi_H(x) \big)$. 
We point out that for the definition of weighted model sets, there is no need to restrict oneself to cocompact lattices. 
However, in order to stay in the setting of weighted Delone sets, we include cocompactness in the definition.

\begin{definition}[Periodization]
\label{def:Period}
For a window function $\win:H\to[0,\infty)$, we define the {\em (weighted) periodization} on the Delone sets $\Delr$ on $G\times H$ with fixed parameters by
$$
\Pp_{\win}f: \Delr \to \RR \,,\quad
	\Pp_{\win}f(D)
		:= \sum_{x \in D}
			f(\pi_G(x))\,\win(\pi_H(x)), 
		\quad\quad f\in \Cc_c(G) .
$$
\end{definition}

\begin{lemma}
\label{lem:PerProp}
The periodization is $G$-equivariant and the map $\IDelGH\ni D\mapsto \Pp_{\win}f(D)$ is continuous for each $f\in\Cc_c(G)$ if $\win$ is a continuous window function.
\end{lemma}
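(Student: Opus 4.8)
The plan is to handle the two claims separately: equivariance is a direct reindexing of the defining sum, whereas continuity is an application of Lemma~\ref{lem:SumDCont} to the product group $G\times H$.

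For equivariance, I would recall that $G$ acts on $G\times H$ by left translation in the first coordinate, $g.(a,b)=(ga,b)$, hence on $\Delr$ via $g.D=\{(ga,b)\mid (a,b)\in D\}$, and on $\Cc_c(G)$ via $(g.f)(a)=f(g^{-1}a)$. Since $(a,b)\mapsto(g^{-1}a,b)$ is a bijection from $D$ onto $g^{-1}.D$ leaving the $H$-component unchanged, plugging the definitions in shows that both $\Pp_{\win}(g.f)(D)$ and $\Pp_{\win}f(g^{-1}.D)$ equal $\sum_{(a,b)\in D}f(g^{-1}a)\,\win(b)$. This yields the identity $\Pp_{\win}(g.f)=g.\big(\Pp_{\win}f\big)$, that is, the asserted $G$-equivariance; no estimates are involved here.

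For continuity, the key observation is that $G\times H$ is itself a lcsc group and $\Delr$ is its compact space of Delone sets, so Lemma~\ref{lem:SumDCont} applies verbatim with $G$ replaced by $G\times H$. I would set $\varphi(D,x):=f(\pi_G(x))\,\win(\pi_H(x))$, which is independent of $D$, and observe that $\Pp_{\win}f(D)=\sum_{x\in D}\varphi(D,x)$ is exactly $\Phi(\varphi)(D)$ in the notation of that lemma. It then remains only to check that $\varphi\in\Cc_c\big(\Delr\times(G\times H)\big)$: joint continuity follows from the continuity of $f$, of the projections $\pi_G,\pi_H$, and of $\win$ --- this last point being precisely where the hypothesis that $\win$ be a \emph{continuous} window function is used --- while compact support follows because $\varphi(D,x)\neq 0$ forces $\pi_G(x)\in\supp(f)$ and $\pi_H(x)\in\supp(\win)$, so that $\supp(\varphi)\subseteq\Delr\times\big(\supp(f)\times\supp(\win)\big)$, a product of compact sets. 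Lemma~\ref{lem:SumDCont} then yields continuity of $\Pp_{\win}f$ on $\Delr$, and hence of its restriction to every subsystem in $\IDelGH$.

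The main thing to get right --- rather than a genuine obstacle --- is the clean identification of the integrand with an element of $\Cc_c(\Delr\times(G\times H))$, checking joint continuity and compactness of the support simultaneously. It is worth emphasizing that for a merely measurable window function $\varphi$ would fail to be continuous, so the continuity conclusion really does depend on the extra hypothesis, consistent with the natural discontinuities of cut-and-project windows described in the introduction.
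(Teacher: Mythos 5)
Your proposal is correct and follows essentially the same route as the paper: equivariance via the $G$-invariance of $\win\circ\pi_H$ together with a reindexing of the sum, and continuity by identifying $(D,x)\mapsto f(\pi_G(x))\,\win(\pi_H(x))$ as an element of $\Cc_c\big(\Delr\times(G\times H)\big)$ and applying Lemma~\ref{lem:SumDCont} with the ambient group $G\times H$. Your write-up is somewhat more explicit than the paper's on both points, but there is no substantive difference.
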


\begin{proof}
Since $\win(\pi_H(x))=\win(\pi_H(gx))$ for all $g\in G$ and $x\in G\times H$, the $G$-equivariance $\Pp_{\win}f(g\Lambda)=\Pp_{\win}f(g \cdot)(\Lambda)$ follows by a short computation. Let $\win$ be a continuous window function and $f\in\Cc_c(G)$. Then the map $\Delr\times G\times H\ni (D,x)\mapsto f(\pi_G(x))\,\win(\pi_H(x))$ is continuous with compact support contained in $\Delr\times\supp(f)\times\supp(\win)$. Thus, the continuity of the periodization follows by Lemma~\ref{lem:SumDCont}.
\end{proof}

\medskip

The notion of periodization (sometimes also called Siegel transformation) has been studied before in a wide range of contexts, among them diffraction theory. 
Here, periodization maps can be used to determine the autocorrelation of Delone dynamical system, which is a unique positive definite Radon
measure on $G$. 
Due to its continuity and $G$-invariance, the window function is an additional compatible ingredient. 

\medskip

For the proof of existence, we use that $\Cc_c(G)$ is a $\ast$-algebra with the usual involution and convolution defined by
$$
f^*(g) 
	:= \overline{f(g^{-1})}, \qquad
(f_1\ast f_2)(g) 
	:= \int_G f_1(h) f_2(h^{-1}g) \, dm_L(h).
$$
The following proposition is a well-known statement from abstract harmonic analysis.
Our proof follows the lines of the proof of \cite[Proposition~4.8]{BjHaPo16}. 

\begin{proposition} 
\label{prop:ACweight}
Fix $\Omega\in\IDelGH$ with a $G$-invariant, probability measure $\mu$ on $\Omega$ and suppose that $\win\in\Cc_c(H)$ is a continuous window function. Then, there is a unique positive definite Radon measure $\gamma:=\gamma_{\Omega,\win,\mu}$ on $G$ satisfying 
$$
\gamma\big( f_1^{*} * f_2 \big) 
	= \big\langle \Pp_{\win}f_1, \Pp_{\win}f_2 \big\rangle_{L^2(\Omega, \mu)}
	= \int_\Omega \overline{\Pp_{\win}f_1(D)}\; \Pp_{\win}f_2(D) \; d\mu(D)
$$
for all $f_1,f_2 \in \Cc_c(G)$.
\end{proposition}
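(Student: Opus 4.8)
The plan is to read the right-hand side as the sesquilinear form associated with an equivariant map into a unitary $G$-module, and then to appeal to the well-known correspondence between such data and positive definite measures on $G$, exactly as in \cite[Proposition~4.8]{BjHaPo16}.

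First I would fix the Hilbert space $\mathcal{H}:=L^2(\Omega,\mu)$ together with the Koopman representation $U$ of $G$ given by $(U_g\psi)(D):=\psi(g^{-1}D)$. Because $\mu$ is $G$-invariant, each $U_g$ is unitary and $g\mapsto U_g$ is a strongly continuous representation. Next I would record that $V:=\Pp_{\win}\colon\Cc_c(G)\to\mathcal{H}$ is a well-defined linear map: by Lemma~\ref{lem:PerProp} the function $\Pp_{\win}f$ is continuous on the compact space $\Omega$, hence lies in $\Cc(\Omega)\subseteq L^2(\Omega,\mu)$, and linearity is clear from the defining sum. The $G$-equivariance established in Lemma~\ref{lem:PerProp} says precisely that $V(L_gf)=U_gVf$ for all $g\in G$, where $(L_gf)(x):=f(g^{-1}x)$; thus $V$ intertwines left translation on $G$ with the Koopman action.

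With this in place I would introduce the form $B(f_1,f_2):=\langle Vf_1,Vf_2\rangle_{L^2(\Omega,\mu)}$ and argue that $B$ depends only on the convolution $f_1^{*}*f_2$ and is positive definite, so that $\gamma(f_1^{*}*f_2):=B(f_1,f_2)$ becomes a well-defined positive definite functional on the linear span of convolutions. Positivity is automatic, since $B(f,f)=\|Vf\|^2\ge 0$. For the factoring through convolution I would combine unitarity with equivariance: formally $\langle Vf_1,Vf_2\rangle=\int_G (f_1^{*}*f_2)(g)\,c(g)\,dm_L(g)$ with $c(g)=\langle w,U_gw\rangle$, so that only $f_1^{*}*f_2$ enters. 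This is the precise content of the harmonic-analytic correspondence, which produces a positive definite Radon measure $\gamma$ on $G$ with $\gamma(f_1^{*}*f_2)=\langle Vf_1,Vf_2\rangle$; unfolding the inner product on $L^2(\Omega,\mu)$ then yields the displayed integral formula. Uniqueness follows because the span of $\{f_1^{*}*f_2\}$ is dense in $\Cc_c(G)$ in the inductive-limit topology (using an approximate identity, $e_\alpha^{*}*f\to f$ with controlled supports) and a Radon measure is determined by its values on $\Cc_c(G)$.

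I expect the main obstacle to be exactly this last passage, from a positive definite form on the convolution subalgebra to a genuine Radon measure on $G$. The subtlety is that the span of convolutions is merely dense in $\Cc_c(G)$, and that $\gamma$ is typically unbounded and singular (for instance atomic), so one cannot simply read off the continuous density $g\mapsto\langle w,U_gw\rangle$: the formal cyclic vector $w$ (``unit mass at the origin'') does \emph{not} belong to $L^2(\Omega,\mu)$. Producing $\gamma$ directly from the equivariant pair $(V,U)$, without such a vector, is precisely the harmonic-analytic input we borrow, and the substance of my verification is to check that $\Pp_{\win}$ meets its hypotheses: unitarity of $U$, equivariance of $V$, and the continuity (from Lemma~\ref{lem:PerProp}) ensuring that $V$ lands in $\mathcal{H}$.
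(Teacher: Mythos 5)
Your proposal is correct and follows essentially the same route as the paper: both reduce the statement to the harmonic-analytic correspondence of \cite[Proposition~4.8]{BjHaPo16} after verifying, via Lemma~\ref{lem:PerProp}, that $\Pp_{\win}$ is a continuous-valued, $G$-equivariant map into the Koopman module $L^2(\Omega,\mu)$, and both obtain uniqueness from the density of $\{f_1^{*}*f_2 \,|\, f_1,f_2\in\Cc_c(G)\}$ in $\Cc_c(G)$. The only difference is cosmetic: the paper sketches the construction of $\gamma$ explicitly (a $\Delta(G)$-invariant Radon measure on $G\times G$ pushed down to $\Delta(G)\backslash (G\times G)\cong G$), whereas you cite that step as a black box while correctly identifying it as the crux.
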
 

\begin{proof}
Define a Radon measure $\tilde{\gamma}$ on $G\times G$ by
$$
\tilde{\gamma}(\overline{f_1}\otimes f_2)
 	:=  \big\langle \Pp_{\win}f_1, \Pp_{\win}f_2 \big\rangle_{L^2(\Omega, \mu)},
 	\qquad
 	f_1,f_2\in\Cc_c(G).
$$
The integral is well-defined by Lemma~\ref{lem:PerProp}. Additionally, Lemma~\ref{lem:PerProp} and the $G$-invariance of $\mu$ imply that the measure $\tilde{\gamma}$ is invariant by the action of the diagonal subgroup $\Delta(G)$ of $G\times G$. Thus, we can view $\tilde{\gamma}$ as a measure on $\Delta(G)\backslash G\times G$. The map $\Delta(G)\backslash G\times G\to G, [g,h]\mapsto g^{-1}h$ is a homeomorphism. Consequently, $\tilde{\gamma}$ corresponds to a measure $\gamma$ satisfying $\tilde{\gamma}(\overline{f_1}\otimes f_2) = \gamma(f_1^{*} * f_2)$. The identity $\gamma\big( f^{*} * f \big) =\|\Pp_{\win}f\|_{L^2(\Omega, \mu)}^2\geq 0$ implies that the measure is  positive definite. Furthermore, $\{f_1^{*} * f_2\,|\, f_1,f_2\in\Cc_c(G)\}\subseteq\Cc_c(G)$ is dense implying the uniqueness.
\end{proof}

\medskip


\begin{definition}
\label{def:ACweight}
Let $\Omega\in\IDelGH$ with a $G$-invariant, probability measure $\mu$ on $\Omega$ and $\win\in\Cc_c(H)$ be a continuous window function. We call the measure $\gamma_{\Omega,\win,\mu}$ given by the formula in Proposition~\ref{prop:ACweight} a 
{\em ($\win$-weighted) autocorrelation of $(\Omega,\mu,G)$}.\\
If, in addition, the dynamical system $(\Omega,G)$ is uniquely 
ergodic with invariant measure $\mu$, then the measure $\gamma_{\Omega,\win}:=\gamma_{\Omega,\win,\mu}$ is called {\em the} {\em ($\win$-weighted) autocorrelation measure} or just the {\em ($\win$-weighted) autocorrelation} of $\Omega$. If $H=\{e_H\}$ is the trivial group and $\win= 1$ the notation $\gamma_{\Omega,\mu}$ respectively $\gamma_\Omega$ is used. 
\end{definition}

\begin{remark}
\label{rem:ACweight}
It is worth pointing out that the previously defined concept delivers the notion of autocorrelation of Delone sets $D$ in a lcsc group $G$ by considering the trivial group $H=\{e_H\}$ with continuous window function $\win= 1$. It was shown in \cite[Corollary~5.4]{BjHaPo16} that for uniquely ergodic hulls $\Hh^D$, this notion coincides with the classical concepts of autocorrelation. 
\end{remark}



\subsection{Approximation of the autocorrelation} \label{sec:Appr-AC}

We now prove that the autocorrelation of uniquely ergodic Delone dynamical systems
is approximated by all possible autocorrelations of hulls converging to the system.

\begin{theorem}
\label{thm:Appr-wgtAC}
Let $\Omega\in\IDelGH$ be uniquely ergodic with measure $\mu$ and $\win\in\Cc_c(H)$ be a continuous window function. If $(\Omega_l,\mu_l,G)$
are Delone dynamical systems such that $\Omega_l\in\IDelGH\,,\, l\in\NN\,,$ and $\Omega_l \to \Omega$ 
in the Chabauty-Fell topology, then
$$
\lim_{l \to \infty} \gamma_{\Omega_l,\win,\mu_l} = \gamma_{\Omega,\win}
$$
in the weak-$*$-topology on $\mathcal{M}(G)$.
\end{theorem}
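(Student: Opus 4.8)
The plan is to realise all the data inside the single compact ambient space $\Delr$ of Delone sets in $G\times H$, on which $G$ acts by translation in the first coordinate, and to transport the convergence of hulls to a convergence of measures via Theorem~\ref{thm:mainapprox}. Since $\Omega_l,\Omega\in\IDelGH$ are in particular $G$-invariant and compact, the triples $(\Omega_l,\mu_l,G)$ and $(\Omega,\mu,G)$ are subsystems of $(\Delr,G)$, and $(\Omega,\mu,G)$ is uniquely ergodic by hypothesis. As $\Omega_l\to\Omega$ in the Chabauty-Fell topology on $\ks(\Delr)$, Theorem~\ref{thm:mainapprox} gives $\mu_l\to\mu$ in the weak-$*$ topology of $\Mm(\Delr)$, i.e.\ $\int_{\Delr}F\,d\mu_l\to\int_{\Delr}F\,d\mu$ for every $F\in\Cc(\Delr)$.

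Weak-$*$ convergence of $\gamma_{\Omega_l,\win,\mu_l}$ to $\gamma_{\Omega,\win}$ must be tested against all of $\Cc_c(G)$, and I would first verify it on the dense subset $\{f_1^{*}*f_2\mid f_1,f_2\in\Cc_c(G)\}$ (density is recorded in the proof of Proposition~\ref{prop:ACweight}). For such a function the defining identity of the autocorrelation reads
$$
\gamma_{\Omega_l,\win,\mu_l}\bigl(f_1^{*}*f_2\bigr)
 = \int_{\Omega_l}\overline{\Pp_{\win}f_1(D)}\,\Pp_{\win}f_2(D)\,d\mu_l(D).
$$
By Lemma~\ref{lem:PerProp} the periodizations $\Pp_{\win}f_i$ are continuous on the compact space $\Delr$, so $F:=\overline{\Pp_{\win}f_1}\,\Pp_{\win}f_2$ lies in $\Cc(\Delr)$. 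Feeding this $F$ into the convergence $\mu_l\to\mu$ from the previous step immediately yields $\gamma_{\Omega_l,\win,\mu_l}(f_1^{*}*f_2)\to\gamma_{\Omega,\win}(f_1^{*}*f_2)$, and hence convergence on the whole linear span.

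The remaining, and genuinely delicate, point is to upgrade convergence on this dense subspace to vague convergence against an arbitrary $\beta\in\Cc_c(G)$; this requires a uniform (in $l$) local bound on the measures. Here I would use the $\Delta(G)$-quotient description from the proof of Proposition~\ref{prop:ACweight} together with the $G$-invariance of $\mu_l$ to obtain, for any fixed $\alpha\in\Cc_c(G)$ with $\int_G\alpha\,dm_L=1$, the representation
$$
\gamma_{\Omega_l,\win,\mu_l}(\beta)
 = \int_{\Omega_l}\sum_{x,y\in D}\win\bigl(\pi_H(x)\bigr)\win\bigl(\pi_H(y)\bigr)\,\alpha\bigl(\pi_G(x)\bigr)\,\beta\bigl(\pi_G(x)^{-1}\pi_G(y)\bigr)\,d\mu_l(D),
$$
in which the right-hand side is a finite sum for each $D$ and is independent of the choice of $\alpha$. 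For $\beta$ supported in a fixed compact $K$, the uniform discreteness of the $(U,K)$-Delone sets bounds the number of contributing pairs $(x,y)$ independently of $D$ and of $l$, whence $|\gamma_{\Omega_l,\win,\mu_l}(\beta)|\le C_K\|\beta\|_\infty$ with $C_K$ independent of $l$. Alternatively, this uniform translation boundedness follows from positive definiteness of the $\gamma_{\Omega_l,\win,\mu_l}$ combined with the estimate $\|\Pp_{\win}f\|_{L^2(\Omega_l,\mu_l)}\le\|\Pp_{\win}f\|_{\infty}$, which is uniform in $l$ because the $\mu_l$ are probability measures on the compact space $\Delr$.

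With this uniform bound in hand, a three-$\varepsilon$ argument finishes the proof: given $\beta\in\Cc_c(G)$, approximate it uniformly, with supports contained in a fixed compact set, by elements $h$ of the dense span; then $|\gamma_{\Omega_l,\win,\mu_l}(\beta)-\gamma_{\Omega,\win}(\beta)|$ is controlled by the uniform local bound applied to $\beta-h$ plus the already established convergence on $h$. I expect the extraction of the uniform local bound (the representation above and the Delone counting estimate) to be the main obstacle, whereas the first two steps are a direct application of Theorem~\ref{thm:mainapprox} and the continuity supplied by Lemma~\ref{lem:PerProp}.
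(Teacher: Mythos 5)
Your proposal follows the same route as the paper's proof: realise all systems as subsystems of the compact ambient space $\Delr$, invoke Theorem~\ref{thm:mainapprox} to get $\mu_l\to\mu$ in the weak-$*$ topology, and then combine the continuity of $D\mapsto\Pp_{\win}f(D)$ from Lemma~\ref{lem:PerProp} with the defining formula of Proposition~\ref{prop:ACweight} to pass the limit inside the integral and obtain $\gamma_{\Omega_l,\win,\mu_l}\bigl(f_1^{*}*f_2\bigr)\to\gamma_{\Omega,\win}\bigl(f_1^{*}*f_2\bigr)$. The paper's proof stops at exactly this point, tacitly treating convergence on the span of $\{f_1^{*}*f_2\mid f_1,f_2\in\Cc_c(G)\}$ as sufficient for weak-$*$ convergence on all of $\Cc_c(G)$; your third step --- the $\alpha$-section representation of $\gamma_l(\beta)$ coming from the $\Delta(G)$-quotient, the uniform-in-$l$ local bound obtained from $U$-uniform discreteness of the Delone sets, and the three-$\varepsilon$ upgrade --- is precisely the standard argument that justifies this passage, and it is correct as you describe it (an approximate identity $\varphi_\varepsilon*\beta$ keeps supports in a fixed compact set, so the uniform local bound applies to the difference). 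So your proof matches the paper's in its core mechanism and is, if anything, more complete in its final step.
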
 

\begin{proof}
Note first that $\supp\,\mu \subseteq \Omega$ and $\supp\,\mu_l \subseteq \Omega_l$. 
Since $\Omega$ is uniquely ergodic and $\Omega_l\to\Omega$ in the Chabauty-Fell topology, Theorem~\ref{thm:mainapprox} implies the convergence of the measures $(\mu_l)$ in the weak-$\ast$ topology to $\mu$. We write $\gamma_l := \gamma_{\Omega_l,\win,\mu_l}$ and $\gamma:=\gamma_{\Omega,\win}$. The continuity of $\win$ leads to the continuity of $D\mapsto\Pp_\win (f)(D)$ for $f\in\Cc_c(G)$, c.f. Lemma~\ref{lem:PerProp}. Thus, the formula of Proposition~\ref{prop:ACweight} and $\mbox{w-}*\mbox{-}\lim_{l\to\infty}\mu_l=\mu$ leads to
$$
\lim_{l \to \infty} \gamma_l\big( f_1^{*} * f_2 \big) 
	= \lim_{l \to \infty} \int_{\Delr} \overline{\Pp_{\win}(f_1)}\; {\Pp_{\win}(f_2)}  \,d\mu_l 
	= \int_{\Delr} \overline{\Pp_{\win}(f_1)}\; {\Pp_{\win}(f_2)}  \,d\mu 
	= \gamma \big( f_1^{*} * f_2 \big)
$$
for $f_1,f_2\in\Cc_c(G)$. 
\end{proof}

\medskip

The specific case where $H=\{e_H\}$ is the trivial group and $\win= 1$, leads to the following. 

\begin{cor}[Continuity of the autocorrelation for Delone sets]
\label{cor:Appr-AC}
Let $\Omega\in\IDelG$ be a uniquely ergodic Delone dynamical system. 
If $(\Omega_l,\mu_l,G)$
are Delone dynamical systems in $\IDelG$ such that $\Omega_l \to \Omega$ 
in the Chabauty-Fell topology, then
$$
\mbox{w-}*\mbox{-}\lim_{l \to \infty} \gamma_{\Omega_l,\mu_l} = \gamma_{\Omega,\mu}
$$
in the weak-$*$-topology on $\mathcal{M}(G)$.
\end{cor}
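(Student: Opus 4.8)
The plan is to deduce the corollary directly from Theorem~\ref{thm:Appr-wgtAC} by specializing the auxiliary group and the window function. First I would take $H = \{e_H\}$ to be the trivial group, so that $G \times H$ is canonically identified with $G$ and the space $\IDelGH$ of closed $(G\times H)$-invariant subsets coincides with $\IDelG$. Under this identification the hypotheses of the corollary, namely that $\Omega \in \IDelG$ is uniquely ergodic with invariant measure $\mu$ and that $\Omega_l \to \Omega$ in the Chabauty-Fell topology, become verbatim the hypotheses of Theorem~\ref{thm:Appr-wgtAC}.

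Next I would observe that the constant function $\win = 1$ on the one-point group $H = \{e_H\}$ is a continuous window function in the sense of Definition~\ref{def:WghtModSet}: it takes values in $[0,\infty)$, is trivially continuous, and has compact support because $H$ itself is compact. With these choices one has $\pi_H(x) = e_H$ for every $x$, so that $\win(\pi_H(x)) = 1$ and the periodization $\Pp_{\win} f$ reduces to $D \mapsto \sum_{x \in D} f(x)$. Consequently the $\win$-weighted autocorrelation $\gamma_{\Omega,\win,\mu}$ is precisely the unweighted autocorrelation $\gamma_{\Omega,\mu}$ recorded in Definition~\ref{def:ACweight}, and likewise $\gamma_{\Omega_l,\win,\mu_l} = \gamma_{\Omega_l,\mu_l}$ for every $l \in \NN$. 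Applying Theorem~\ref{thm:Appr-wgtAC} then yields the asserted weak-$*$ convergence on $\Mm(G)$. There is no genuine obstacle here; the only point requiring a moment's care is to confirm that $\win = 1$ qualifies as a continuous window function, which is immediate from the compactness of the trivial group.
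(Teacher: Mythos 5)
Your proposal is correct and matches the paper's own derivation exactly: the paper obtains Corollary~\ref{cor:Appr-AC} precisely as the special case of Theorem~\ref{thm:Appr-wgtAC} with $H=\{e_H\}$ and $\win=1$, as anticipated in Definition~\ref{def:ACweight} and Remark~\ref{rem:ACweight}. Your additional check that $\win=1$ is a valid continuous window function on the compact trivial group is a sensible, if routine, verification.
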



\section{Random bounded operators and convergence of the density of states} \label{sec:RanOpDOS}

This section deals with random bounded operators that are introduced in Subsection~\ref{ssec:RandOp}. As a further application of Theorem~\ref{thm:mainapprox}, we derive that the corresponding density of states converges in the weak-$\ast$ topology if the underlying Delone dynamical systems converge and the limiting Delone dynamical system is uniquely ergodic, c.f. Theorem~\ref{thm:Appr-wgtDOS} and Corollary~\ref{cor:Appr-DOS}.


\subsection{Random bounded operators} \label{ssec:RandOp}

As before, $\Delr$ denotes the set of Delone sets in $G\times H$ with fixed parameters $U$ and $K$. The closed, $G\times H$-invariant subsets are denoted by $\IDelGH$. Whenever we only speak about Delone sets in $G$ we will specify. In order to use the abstract theory of the density of states discussed in the Appendix~\ref{ssec:appendix-Groupoid}, we assume that the groups $G$ and $H$ are unimodular.

\medskip

In the following we describe the class of operators which lie at the heart of our investigations. Let $\win\in\Cc_c(H)$ be a window function and $D\in\Delr$. A priori, $\langle u, v\rangle_{D,\win} := \sum_{x\in D} \overline{u(x)}\, v(x)\, \win(\pi_H(x))$ defines only a semi inner product. Passing to $\tilde{D}:=\{ x\in D \,|\, \win(\pi_H(x))\neq 0 \}$, the Hilbert space $\ell^2(\tilde{D},\win)$ can be defined and the above semi inner product is actually an inner product. With a slight abuse of notation, we consider $\ell^2(D,\win)$ as Hilbert space with inner product $\langle u, v\rangle_{D,\win} := \sum_{x\in D} \overline{u(x)}\, v(x)\, \win(\pi_H(x))$. Hence, the window function $\win\in\Cc_c(H)$ induces a family of weighted Hilbert spaces $\big(\ell^2(D,\win)\big)_{D\in\Delr}$. The $G$-invariance of the weight function $\win$ will play a crucial role in order to define the direct integral space $\int_\Omega^\oplus \ell^2(D,\win)\, d\mu(D)$.

\medskip

A crucial point of view is that the Hamiltonians are elements of the associated $C^\ast$-algebra \cite{Bel86}. Following the lines of \cite[Section~4]{LeSt03-Algebras}, we define a $C^\ast$-algebra of operators that we are dealing with. These operator families are integral operators with suitable kernels (or convolution operators) defined on the family of Hilbert space $\big(\ell^2(D,\win_D)\big)_{D\in\Delr}$. 

\begin{lemma}
\label{lem:kerXclos}
Let $W\subseteq H$ be compact. The set
$$
X \; := \; X_W \; := \;
	\big\{
		(x,D,y)
		\;\big|\; 
		D\in \Delr,\;
		x,y\in D\cap G\times W
	\big\}
$$
is a closed $G$-invariant subset of $(G\times H)\times\Delr\times (G\times H)$.
\end{lemma}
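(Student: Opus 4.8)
The statement decomposes into a $G$-invariance claim and a closedness claim, and the plan is to treat them separately. Here $G$ acts diagonally by left translation, $g\cdot(x,D,y)=(g\cdot x, g\cdot D, g\cdot y)$, where $g$ acts on $G\times H$ through the subgroup $G\times\{e_H\}$ via $g\cdot(a,b)=(ga,b)$, and correspondingly $g\cdot D=\{g\cdot z\mid z\in D\}$. Invariance I would verify directly. Fixing $(x,D,y)\in X$, so that $D\in\Delr$ and $x,y\in D\cap(G\times W)$, I observe that $g\cdot D\in\Delr$ because left translation preserves both $U$-left uniform discreteness and $K$-left relative density, that $g\cdot x, g\cdot y\in g\cdot D$ since $x,y\in D$, and finally that $g\cdot x, g\cdot y$ still lie in $G\times W$ because left translation by $g$ leaves the $H$-coordinate untouched. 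Hence $g\cdot(x,D,y)\in X$, so invariance is immediate.

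For closedness, I would first reduce to sequences: $G\times H$ is lcsc and $\Delr$ is compact, second countable and Hausdorff, so the ambient product is metrizable and it suffices to test closedness along sequences. Let $(x_n,D_n,y_n)\in X$ with $(x_n,D_n,y_n)\to(x,D,y)$, meaning $x_n\to x$, $y_n\to y$ in $G\times H$ and $D_n\to D$ in the Chabauty-Fell topology on $\Delr$. Since $\Delr$ is compact, hence closed, the limit $D$ again belongs to $\Delr$. Moreover $W$ is compact, so the strip $G\times W$ is closed in $G\times H$; as $x_n,y_n\in G\times W$, passing to the limit gives $x,y\in G\times W$. It then remains only to show $x,y\in D$.

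The genuine content, and the one place where the Chabauty-Fell convergence does real work, is the persistence of points: if $D_n\to D$ in the Chabauty-Fell topology and $x_n\in D_n$ with $x_n\to x$, then $x\in D$. I would prove this by contradiction. If $x\notin D$, then, $D$ being closed and $G\times H$ locally compact Hausdorff, there is a compact neighborhood $K$ of $x$ with $K\cap D=\emptyset$; thus $D\in\vs(K,\emptyset)$, a basic Chabauty-Fell open set, so $D_n\in\vs(K,\emptyset)$, i.e. $D_n\cap K=\emptyset$, for all large $n$. But $x_n\to x\in\operatorname{int}(K)$ forces $x_n\in K$ eventually, contradicting $x_n\in D_n$. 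Applying this persistence property to $(x_n)$ and to $(y_n)$ yields $x,y\in D$; combined with $x,y\in G\times W$ and $D\in\Delr$, this gives $(x,D,y)\in X$ and proves closedness. The only real obstacle is this persistence lemma, and even that amounts to little more than unwinding the definition of the Chabauty-Fell basis; everything else is routine bookkeeping.
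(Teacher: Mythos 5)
Your proof is correct and follows essentially the same route as the paper's: the paper simply observes that $X$ is the intersection of the two closed sets $\{(x,D,y)\mid x,y\in D\}$ and $(G\times W)\times\Delr\times(G\times W)$, and your persistence-of-points argument is precisely the verification that the first of these is closed (a fact the paper takes for granted). You additionally spell out the $G$-invariance, which the paper's one-line proof omits entirely.
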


\begin{proof}
The set $X$ is closed as intersection of the two closed sets $\{(x,D,y) \,|\, x,y\in D\}$ and $(G\times W) \times \Delr\times (G\times W)$. 
\end{proof}

\medskip

In the following, the closed subset $X\subseteq (G\times H)\times\Delr\times (G\times H)$ is equipped with the induced topology.

\begin{definition} 
\label{def:kernel}
A continuous function $a\in\Cc(X)$ is called
{\em kernel of finite type} if
\begin{itemize}
\item[(i)] $\|a\|_\infty<\infty$; 
	\hfill \textbf{(bounded)}\\[-0.2cm]
\item[(ii)] $\begin{array}{l}
		\text{there is a compact } K_a\subseteq G \text{ such that } K_a^{-1}=K_a \text{ and for all}\\[0.05cm]
		(x,D,y)\in X \text{ with } \pi_G(x^{-1}y)\not\in K_a \,, \text{ we have } a(x,D,y)=0;
	\end{array}$ 
	\hfill \textbf{(finite range)}\\[-0.01cm]
\item[(iii)] $a(gx,gD,gy)=a(x,D,y)$ for all $g\in G$ and $(x,D,y)\in X$.
	\hfill \textbf{($G$-invariance)}
\end{itemize}
The set $K_a$ is called {\em support of influence of $a$}.
\end{definition}

In order to define an associated $C^\ast$-algebra, let $\win\in\Cc_c(H)$ be a continuous window function and $X:=X_{\supp(\win)}$. The vector space $\Kfin$ of all kernels $a\in\Cc(X)$ of finite type (with pointwise addition and scalar multiplication) give rise to a $\ast$-algebra if equipped with
\begin{eqnarray*}
a^*(x,D,y) := &\overline{a(y,D,x)}\,,
	\qquad &\textbf{(involution)}\\
(a\star b)(x,D,y) := &\sum\limits_{z\in D} 
		a\big(x,D,z\big)\; b\big(z,D,y\big)\; \win(\pi_H(z))\,,
	\qquad &\textbf{(convolution)}
\end{eqnarray*}
where $\overline{a}$ denotes complex conjugation. It is straight forward to check that $a^*$ and $a\star b$ are again elements of $\Kfin$. For the continuity of $a\star b$ one uses the finite range property and an argument similar to the one in the proof of Lemma~\ref{lem:SumDCont}.

\medskip

We define a $\ast$-representation $\lambda^D:\Kfin\to\Ll(\ell^2(D,\win)),\, D\in\Delr,$ of $\Kfin$ into the linear bounded operators of $(\ell^2(D,\win))_{D}$ via
$$
\big(\lambda^D(a) u\big)(x) \; := \;
	\sum_{y\in D} a(x,D,y)\; u(y)\; \win(\pi_H(y))\,,
$$
for $a\in\Kfin$, $u\in\ell^2(D,\win)$ and $x\in D$. Recall that $\lambda^D$ is a $\ast$-representation if it is linear, multiplicative (i.e., $\lambda^D(a\star b)=\lambda^D(a)\lambda^D(b)$) and compatible with the involution (i.e., $\lambda^D(a^*)=\lambda^D(a)^*$). Furthermore, the family of representations $\big(\lambda^D\big)_D$ is faithful, i.e., $a=0$ if and only if $\lambda^D(a)=0$ for all $D\in\Delr$. The norm completion $\Cfin$ of $\Kfin$ with the norm $\|a\|:= \sup_{D\in\Delr} \|\lambda^D(a)\|$ gets a $C^\ast$-algebra.

\medskip

\begin{remark}
\label{rem:groupoidCalg}
As shown for $G=\RR^d$ in \cite[Proposition~4.4]{LeSt03-Algebras}, the $C^\ast$-algebra $\Cfin$ is isomorphic in terms of $C^\ast$-algebra of the associated groupoid $C^\ast$-algebra studied in \cite{Bel86,BeHeZa00,BeBeNi17}. There, the groupoid $C^\ast$-algebra is defined by the transversal of a Delone dynamical system. Those results extend to our situation. We leave the proof to the reader as it is straightforward.
\end{remark}

\begin{definition} 
\label{def:RanOp}
The operator family $A:=(A_D)_{D}$ defined by $A_D:= \lambda^D(a)$ for $a\in\Cfin$ is called a {\em random bounded operator} over the family of Hilbert spaces $(\ell^2(D,\win))_{D}$. We say that $a\in\Cfin$ is the {\em kernel} of $A$. If $a\in\Kfin$, we say $A$ is of finite range.
\end{definition}

Let $u\in\Cc_c(G\times H)$ and define $(u_D)_{D}$ by the restrictions $u_D:=u|_D$ for $D\in\Delr$. The following statement assures that random bounded operators are contained in the von Neumann algebra defined in Appendix~\ref{ssec:appendix-Groupoid}. With this at hand, the corresponding theory about the density can be used for random bounded operators.

\begin{proposition}
\label{prop:RanOpProp}
Let $A$ be a random bounded operator with kernel $a\in\Cfin$.
\begin{itemize}
\item[(i)] Let $\rho\in\Cc_c(G)$. Then the map
	$
		\Delr\ni D\mapsto \sum_{x\in D} \rho\big(\pi_G(x)\big)\langle \delta_x , A_D \delta_x \rangle_{D,\win}
	$
	is continuous.
\item[(ii)] The operator family $A$ is $G$-equivariant, i.e., for every $g\in G$ and $D\in\Delr$, we have
	$
		T_g A_D T_g^* = A_{gD}
	$
	where 
	$T_g:\ell^2(D,\win)\to\ell^2(gD,\win),$ $T_g u := u(g^{-1}\cdot)$.
\end{itemize}
\end{proposition}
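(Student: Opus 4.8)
The plan is to handle the two parts separately: I would derive (i) from the continuity result in Lemma~\ref{lem:SumDCont}, applied to the lcsc group $G\times H$, and prove (ii) by a direct computation exploiting the $G$-invariance built into both the kernel and the weight.

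For part~(i), I would first make the integrand explicit. From the definition of $\lambda^D(a)$ one computes $(A_D\delta_x)(z)=a(z,D,x)\,\win(\pi_H(x))$, and hence $\langle \delta_x, A_D\delta_x\rangle_{D,\win}=a(x,D,x)\,\win(\pi_H(x))^2$. The map under consideration is therefore $D\mapsto \sum_{x\in D}\rho(\pi_G(x))\,a(x,D,x)\,\win(\pi_H(x))^2$. To invoke Lemma~\ref{lem:SumDCont} over $G\times H$, I must produce a single function $\varphi\in\Cc_c(\Delr\times(G\times H))$ with $\sum_{x\in D}\varphi(D,x)$ equal to this sum for every $D\in\Delr$.

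The only non-routine point, and the one I expect to be the main obstacle, is that the diagonal values $a(x,D,x)$ are a priori only defined on $\tilde X:=\{(D,x)\mid x\in D\cap(G\times W)\}$, where $W:=\supp(\win)$, rather than on all of $\Delr\times(G\times H)$. Here I would argue that $\tilde X$ is closed: if $x_n\in D_n$ with $(D_n,x_n)\to(D,x)$, then $x\in D$ since a limit of points of Chabauty-Fell convergent closed sets lies in the limit set, and $G\times W$ is closed because $W$ is compact. As $\Delr\times(G\times H)$ is metrizable, hence normal, the continuous function $(D,x)\mapsto a(x,D,x)$ on the closed set $\tilde X$ extends, by the Tietze extension theorem, to a continuous $\tilde g$ on $\Delr\times(G\times H)$. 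Setting $\varphi(D,x):=\rho(\pi_G(x))\,\win(\pi_H(x))^2\,\tilde g(D,x)$ gives an element of $\Cc_c(\Delr\times(G\times H))$, since $x\mapsto\rho(\pi_G(x))\win(\pi_H(x))^2$ lies in $\Cc_c(G\times H)$ and its support is compact. For $x\in D$ the factor $\win(\pi_H(x))^2$ vanishes whenever $x\notin G\times W$, so $\varphi(D,x)$ reproduces exactly the desired summand on all of $D$; Lemma~\ref{lem:SumDCont} then delivers the continuity.

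For part~(ii), I would first verify that $T_g$ is unitary. Using the $G$-invariance of the weight, $\win(\pi_H(gx))=\win(\pi_H(x))$ (as $g\in G$ leaves the $H$-coordinate fixed), together with the substitution $x\mapsto gx$, one obtains $\langle T_gu,T_gv\rangle_{gD,\win}=\langle u,v\rangle_{D,\win}$, so $T_g$ is a surjective isometry and $T_g^*=T_{g^{-1}}$. I would then evaluate $T_gA_DT_g^*$ on a vector $w\in\ell^2(gD,\win)$ directly from the defining sums, perform the substitution $y\mapsto gy$ in the summation over $D$, and apply the $G$-invariance property~(iii) of Definition~\ref{def:kernel} in the form $a(g^{-1}z,D,g^{-1}y')=a(z,gD,y')$ together with the weight invariance; this identifies the result with $(A_{gD}w)(z)$. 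This part is purely computational and presents no genuine difficulty, so the real work is concentrated in the extension step of part~(i).
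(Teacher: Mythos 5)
Your overall route coincides with the paper's: for (i) one writes the diagonal matrix element in terms of the kernel, extends the kernel off the closed set where it lives by Tietze (the paper extends the full three-variable kernel from $X=X_{\supp(\win)}$ of Lemma~\ref{lem:kerXclos}, you extend only its diagonal restriction --- both are fine), and then invokes Lemma~\ref{lem:SumDCont} over the lcsc group $G\times H$; for (ii) both proofs are the same substitution computation using property (iii) of Definition~\ref{def:kernel} and the $G$-invariance of $\win\circ\pi_H$. Your identification of the Tietze step as the crux, and your verification that $\tilde X$ is closed and that the factor $\win(\pi_H(x))^2$ kills the ambiguity outside $G\times\supp(\win)$, are all correct. (Incidentally, your formula $\langle\delta_x,A_D\delta_x\rangle_{D,\win}=a(x,D,x)\,\win(\pi_H(x))^2$ is the careful one; the paper records a single power of $\win$, which makes no difference for continuity.)

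There is, however, one genuine gap. The proposition is stated for $a\in\Cfin$, which is the \emph{norm completion} of $\Kfin$; a general element of $\Cfin$ is not given as a continuous function on $X$, so the expression $a(x,D,x)$ on which your entire argument for (i) rests is not defined. Your proof as written only covers kernels of finite type. The paper handles this by first proving both statements for $a\in\Kfin$ and then passing to the limit: for (ii) this is immediate from norm convergence of $\lambda^D(a_n)\to\lambda^D(a)$ uniformly in $D$; for (i) one needs that the continuous functions $f_n(D)=\sum_{x\in D}\rho(\pi_G(x))\langle\delta_x,\lambda^D(a_n)\delta_x\rangle_{D,\win}$ converge \emph{uniformly} in $D$, which follows from the estimate
\[
|f_n(D)-f(D)|\;\le\;\|a_n-a\|\sum_{x\in D\cap(\supp(\rho)\times\supp(\win))}|\rho(\pi_G(x))|\,\win(\pi_H(x)),
\]
together with the fact that $U$-uniform discreteness bounds the number of summands uniformly over $D\in\Delr$. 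You should add this reduction; without it the statement is only proved on the dense subalgebra $\Kfin$.
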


\begin{proof}
We show the desired results for random bounded operators $A$ of finite range. They extend immediately to all random bounded operators by approximating them in norm. So let $a\in\Kfin$ be a kernel of $A$ with support of influence $K_a\subseteq G$. Using Lemma~\ref{lem:kerXclos}, $X\subseteq (G\times H)\times\Delr\times (G\times H)$ equipped with the induced topology is second countable, locally compact, Hausdorff. Thus, $X$ is a normal space and so Tietzes extension theorem applies. Specifically, there is a $\tilde{a}\in\Cc\big(G\times H\times\Delr\times G\times H\big)$ satisfying $\tilde{a}|_X=a$.

(i): The map $\varphi_A:\Delr\times G \times H\to\CC$ defined by 
$$
\varphi_A(D,x) 
	:= \rho\big(\pi_G(x)\big)\, 
		\tilde{a}(x,D,x)\, 
		\win(\pi_H(x))
	,\qquad (D,x)\in \Delr\times G \times H
$$
is continuous as all involved functions are continuous. Furthermore, its support $\supp(\varphi_A)$ is contained in the compact set $\Delr\times\supp(\rho)\times\supp(\win)$. Hence, the map 
$$
\Delr\ni D 
	\mapsto  \sum_{x\in D} \varphi_A(D,x) 
	= \sum_{x\in D} \rho\big(\pi_G(x)\big)\, \tilde{a}(x,D,x)\, \win(\pi_H(x))
$$ 
is continuous by Lemma~\ref{lem:SumDCont}. The latter sum equals to $\sum_{x\in D} \rho\big(\pi_G(x)\big)\langle \delta_x , A_D \delta_x \rangle_{D,\win}$ since $\tilde{a}$ agrees with $a$ on $X$. Consequently, we have shown the desired continuity.

(ii): Let $g\in G$ and $D\in\Delr$. For $x\in gD$ and $u\in\Cc_c(G\times H)$, we get
\begin{align*}
\big(T_g A_D T_g^*u\big)(x)
	\; =\; &\sum_{y\in D} 
		a\big( g^{-1}x,D,y \big)\,
		u(gy)\, 
		\win(\pi_H(y))\\
	\; =\; &\sum_{y\in D} 
		a(x,gD,gy)\, 
		u(gy)\,
		\win(\pi_H(gy))
	\; =\; \big(A_{gD}u\big)(x)
\end{align*}
by using the $G$-invariance of the kernel $a$ and the window function $\win\circ\pi_H$. Since $\{u_D \,|\, u\in\Cc_c(G\times H)\}=\Cc_c(D,\win) \subseteq \ell^2(D,\win)$ is dense and $A$ is bounded, the desired equivariance of the operators follows.
\end{proof}

\begin{remark}
The specific case that $H=\{e_H\}$ is the trivial group with $\win= 1$ provides the class of random bounded operators $A=(A_D)_{D\in\DelrG}$.
\end{remark}

Based on the non-commutative integration theory of A. Connes \cite{Con78}, the density of states is introduced in \cite{LeSt03-Algebras,LeSt03-Delone,LePeVe07} in the setting of von Neumann algebras arising from a groupoid structure. Dealing with continuous function, it can likewise be defined in the realm of $C^{\ast}$-algebras. In order to obtain
a measure preserving groupoid structure (which is part of the requirements of an {\em admissible setting}, cf.\@ \cite{LePeVe07}), we need to assume that the
group $G$ is unimodular. A more detailed discussion is provided in Appendix~\ref{ssec:appendix-Groupoid}.

\medskip

Let $\Omega\in\IDelGH$ with $G$-invariant probability measure $\mu$ on $\Omega$. We define the associated von-Neumann algebra $\Nn(\Omega,G\times H,\mu,\win)$ of all measurable, bounded and $G$-equivariant operator families $(A_D)_{D\in\Omega}$ (quotient by the equivalence relation that the operator families agree $\mu$-almost everywhere). For this construction, it is crucial that the window function is $G$-invariant in order to use the theory of direct integrals for $\int_\Omega^\oplus \ell^2(D,\win)\, d\mu(D)$. For more information on direct integral theory, we refer the reader to \cite{Dixmier81}.

\begin{corollary}
\label{cor:RaOpvNeu}
Every random bounded operator $A$ defines an element of $\Nn(\Omega,G\times H,\mu,\win)$ and so $A_\Omega=\int^\oplus_\Omega A_D \, d\mu(D)$ is a diagonalizable operator on $\int_\Omega^\oplus \ell^2(D,\win)\, d\mu(D)$.
\end{corollary}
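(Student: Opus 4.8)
The plan is to verify directly the three defining properties of membership in $\Nn(\Omega,G\times H,\mu,\win)$ for the field $(A_D)_{D\in\Omega}$ — boundedness, $G$-equivariance, and measurability — and then to invoke the standard theory of direct integrals to identify $A_\Omega$ as the associated decomposable (in the terminology of the statement, diagonalizable) operator. Two of the three properties are immediate. Boundedness holds because $\sup_{D}\|A_D\| = \sup_{D}\|\lambda^D(a)\| = \|a\| < \infty$ by the very definition of the norm on $\Cfin$, and $G$-equivariance is precisely Proposition~\ref{prop:RanOpProp}(ii). Thus the entire content of the corollary lies in establishing that $(A_D)_D$ is a measurable field of operators with respect to the measurable field structure of $\int_\Omega^\oplus \ell^2(D,\win)\,d\mu(D)$.

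First I would reduce to the finite range case. Since $\Cfin$ is by construction the norm closure of $\Kfin$, and since measurability of operator fields is stable under uniform-in-$D$ norm limits, it suffices to treat a kernel $a\in\Kfin$ of finite type. For such $a$ I would reuse the Tietze device from the proof of Proposition~\ref{prop:RanOpProp}, choosing $\tilde a\in\Cc(G\times H\times\Delr\times G\times H)$ with $\tilde a|_X = a$. Recalling that $\{u_D \mid u\in\Cc_c(G\times H)\}$ is dense in each fibre, it then suffices to check that for all $u,v\in\Cc_c(G\times H)$ the scalar functions $D\mapsto \langle u_D, A_D v_D\rangle_{D,\win}$ are measurable. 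Writing the inner product out produces
$$
\langle u_D, A_D v_D\rangle_{D,\win}
	= \sum_{x,y\in D} \overline{u(x)}\,\tilde a(x,D,y)\,v(y)\,\win(\pi_H(x))\,\win(\pi_H(y)),
$$
which is a finite sum by $U$-uniform discreteness together with the compact supports of $u$, $v$ and $\win$.

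To handle this diagonal double sum I would apply a product analog of Lemma~\ref{lem:SumDCont}. The key observation is that $D\times D$ is $(U\times U)$-uniformly discrete in $(G\times H)^2$ (and in fact a Delone set there), that $D\mapsto D\times D$ is continuous in the Chabauty-Fell topology, and that the integrand above defines a function in $\Cc_c(\Delr\times(G\times H)^2)$. Running the argument of Lemma~\ref{lem:SumDCont} with $D\times D$ in place of $D$ then shows that $D\mapsto\langle u_D,A_D v_D\rangle_{D,\win}$ is even continuous, hence measurable. Alternatively one can iterate the scalar lemma, first proving that $(D,x)\mapsto (A_Dv_D)(x)$ is jointly continuous with compact support via a parametrized version of Lemma~\ref{lem:SumDCont}, and then summing once more over $x\in D$. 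With measurability secured, $(A_D)_D$ is a bounded, measurable, $G$-equivariant field, hence an element of $\Nn(\Omega,G\times H,\mu,\win)$, and the direct integral machinery (cf.\@ \cite{Dixmier81}) yields that $A_\Omega=\int_\Omega^\oplus A_D\,d\mu(D)$ is the corresponding decomposable operator on $\int_\Omega^\oplus\ell^2(D,\win)\,d\mu(D)$.

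I expect the main obstacle to be exactly the passage from the single-point sum of Lemma~\ref{lem:SumDCont} to the diagonal double sum over $D\times D$: one must confirm that the product inherits the uniform discreteness and Chabauty-Fell continuity required by the lemma, and that the compact support of $\tilde a$ together with the window functions genuinely confines the summation to finitely many, continuously varying points of $D$. By comparison, the reduction to finite range and the stability of measurability under norm limits are routine.
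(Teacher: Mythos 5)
Your proposal is correct and follows essentially the same route as the paper: boundedness from the definition of the norm on $\Cfin$, equivariance from Proposition~\ref{prop:RanOpProp}(ii), and measurability via continuity of the matrix coefficients, obtained by reducing to finite range and combining the Tietze extension with Lemma~\ref{lem:SumDCont}. The paper merely remarks that the continuity of $D\mapsto\langle A_Du_D,u_D\rangle_{D,\win}$ ``follows the same lines as'' Proposition~\ref{prop:RanOpProp}(i); your explicit handling of the diagonal double sum via the product set $D\times D$ supplies the detail that remark glosses over.
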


\begin{proof}
Consider the restriction $A_\Omega:=(A_D)_{D\in\Omega}$ of $A$ to $\Omega$. Then $\sup_{D\in\Omega}\|A_D\|\leq \|A\|$ holds and so $A_\Omega$ is bounded. Furthermore, $A_\Omega$ is $G$-equivariant by Proposition~\ref{prop:RanOpProp}~(ii). Finally, $\Omega\ni D\mapsto\langle A_D u_D , u_D \rangle_{D,\win}$ is continuous (and hence measurable) for all $u\in\Cc_c(G\times H)$ which follows the same lines as Proposition~\ref{prop:RanOpProp}~(i). Thus,  $A_\Omega$ is measurable on the induced measurable structure on $\int^\oplus_\Omega \ell^2(D,\win)\, d\mu(D)$ defined in Appendix~\ref{ssec:appendix-Groupoid}. Consequently, the equivalence class of $A_\Omega$ is in $\Nn(\Omega,G\times H,\win)$ and so it is diagonalizable. 
\end{proof}

\begin{remark}
\label{rem:RaOpvNeu}
Corollary~\ref{cor:RaOpvNeu} asserts that $\Cfin$ is a $C^\ast$-subalgebra of the von Neumann algebra $\Nn(\Omega,G\times H,\mu,\win)$ defined in the appendix, c.f. Appendix~\ref{ssec:appendix-Groupoid}.
\end{remark}

For self-adjoint, diagonalizable operators, we now define their weighted density of states in terms of a Pastur-Shubin trace formula.

\begin{definition}
\label{def:wgtDOS}
Let $G$ and $H$ lcsc groups where $G$ is unimodular and $\Omega\in\IDelGH$ with $G$-invariant probability measure $\mu$. Consider a continuous window function $\win\in\Cc_c(H)$ and $\rho \in \Cc_c(G)$ a non-negative function satisfying $\int_G \rho\, dm_G = 1$. The {\em (weighted) abstract density of states} of a self-adjoint random bounded operator $A$ is defined by
$$
\eta^A_{\Omega,\win,\mu} \big( \phi \big) 
	= \int_{\Omega} 
		\sum_{x \in D} \rho(x)\; \langle \delta_x, \phi(A_D)\delta_x \rangle_{D,\win}
	\,d\mu(D),
	\qquad
	\phi\in\Cc_c(\RR).
$$
\end{definition}

The notion above is well-defined in the sense that it is independent of the choice of $\rho$ by Proposition~\ref{prop:GenTrace}. The proof follows by standard computations \cite[Lemma~2.9]{LePeVe07} in context von Neumann algebras. For the convenience of the reader, we provide a proof of this fact in Appendix~\ref{ssec:appendix-Groupoid}. If $\Omega\in\IDelGH$ is uniquely ergodic with measure $\mu$, we use the notation $\eta^A_{\Omega,\win}:=\eta^A_{\Omega,\win,\mu}$. Furthermore, if $H=\{e_H\}$ is the trivial group and $\win= 1$, the index $\win$ is not used in the sequel.


\subsection{Approximation of the density of states for Delone sets} \label{ssec:DOS}

Next, we show that the abstract density of states can be approximated in the weak-$\ast$ topology by approximating the uniquely ergodic Delone dynamical system.

\begin{theorem}
\label{thm:Appr-wgtDOS}
Let $G$ and $H$ be lcsc groups where $G$ is additionally unimodular. Assume further that $\Omega\in\IDelGH$ is uniquely ergodic with measure $\mu$ and that $\win\in\Cc_c(H)$ is a continuous window function. Suppose $(\Omega_l,\mu_l,G)$
are Delone dynamical systems such that $\Omega_l \to \Omega$ 
in the Chabauty-Fell topology. 
Then, for every self-adjoint random bounded operator $A$, we have
$$
\mbox{w-}*\mbox{-}\lim_{l \to \infty} \eta^A_{\Omega_l,\win,\mu_l} = \eta^A_{\Omega,\win}
$$
in the weak-$*$-topology on $\mathcal{M}(\RR)$.
\end{theorem}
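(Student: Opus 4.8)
The plan is to reduce the statement to the abstract convergence of invariant measures provided by Theorem~\ref{thm:mainapprox}, combined with the continuity statement of Proposition~\ref{prop:RanOpProp}~(i), in complete analogy with the proof of Theorem~\ref{thm:Appr-wgtAC}. The organizing principle is that all the measures $\mu_l$ and $\mu$ can be viewed as living on the common compact ambient space $\Delr$ (with $\mu_l$ supported on $\Omega_l\subseteq\Delr$ and $\mu$ on $\Omega\subseteq\Delr$), so that weak-$*$ convergence of the density of states reduces to convergence of the integrals $\int_{\Delr}(\,\cdot\,)\,d\mu_l$ tested against a single continuous integrand.

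First I would fix a non-negative $\rho\in\Cc_c(G)$ with $\int_G\rho\,dm_G=1$; by Proposition~\ref{prop:GenTrace} the value $\eta^A_{\Omega,\win,\mu}(\phi)$ is independent of this choice. Since $\Omega$ is uniquely ergodic and $\Omega_l\to\Omega$ in the Chabauty-Fell topology, Theorem~\ref{thm:mainapprox} yields $\mu_l\to\mu$ in the weak-$*$ topology of $\Mm(\Delr)$. It then suffices to prove, for each fixed $\phi\in\Cc_c(\RR)$, that
$$
\eta^A_{\Omega_l,\win,\mu_l}(\phi)=\int_{\Delr}F_\phi\,d\mu_l\xrightarrow[l\to\infty]{}\int_{\Delr}F_\phi\,d\mu=\eta^A_{\Omega,\win}(\phi),
$$
where $F_\phi(D):=\sum_{x\in D}\rho(\pi_G(x))\,\langle\delta_x,\phi(A_D)\delta_x\rangle_{D,\win}$. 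For this it is enough to show that $F_\phi$ is a continuous function on the compact space $\Delr$, since then the displayed convergence is immediate from $\mu_l\to\mu$.

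The continuity of $F_\phi$ is the heart of the matter. I would argue that $\phi(A):=(\phi(A_D))_{D}$ is again a random bounded operator, i.e.\@ an element of $\Cfin$. Indeed, as $A$ is self-adjoint with kernel $a=a^*\in\Cfin$, it is a self-adjoint element of the $C^*$-algebra $\Cfin$, its spectrum is a compact subset of $\RR$, and continuous functional calculus produces $\phi(A)\in\Cfin$. Since each $\lambda^D$ is a $*$-representation, it intertwines functional calculus, so that $\lambda^D(\phi(a))=\phi(A_D)$ for every $D\in\Delr$. Applying Proposition~\ref{prop:RanOpProp}~(i) to the random bounded operator $\phi(A)$ then shows that $F_\phi$ is continuous on $\Delr$, which completes the argument.

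The one delicate point, which I expect to be the main obstacle, is the functional-calculus step when $\Cfin$ is non-unital, as arises from a nonzero constant term of $\phi$. I would resolve this in one of two ways. Either one passes to the unitization and observes that the identity operator corresponds to a genuine kernel of finite type; this uses crucially that uniform discreteness renders the diagonal of $X$ clopen, so that the identity kernel is continuous. Or, more robustly, one first reduces to polynomials: since $A$ is bounded, every spectrum $\sigma(A_D)$ lies in the fixed compact interval $I:=[-\|A\|,\|A\|]$, and Stone--Weierstrass approximates $\phi$ uniformly on $I$ by polynomials $p_n$. The estimate $\|\phi(A_D)-p_n(A_D)\|\le\|\phi-p_n\|_{\infty,I}$ holds uniformly in $D$, and the trace functional obeys a uniform bound $|\eta^A_{\,\cdot\,,\win,\cdot}(\psi)|\le C\,\|\psi\|_{\infty,I}$ with $C$ independent of the measure, because uniform discreteness restricts the sum to finitely many $x\in D$ with $\rho(\pi_G(x))\,\win(\pi_H(x))\neq 0$, each term being controlled by Cauchy--Schwarz via $|\langle\delta_x,\psi(A_D)\delta_x\rangle_{D,\win}|\le\|\psi\|_{\infty,I}\,\win(\pi_H(x))$. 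This reduces the convergence to the polynomial case, where $p(A)\in\Cfin$ is manifest, so the obstacle dissolves.
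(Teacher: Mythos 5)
Your proposal follows essentially the same route as the paper's proof: Theorem~\ref{thm:mainapprox} gives $\mu_l\to\mu$ in the weak-$*$ topology, $\phi(A)$ is again a random bounded operator by functional calculus, and Proposition~\ref{prop:RanOpProp}~(i) supplies the continuity of the integrand needed to pass to the limit. Your additional care about the non-unital functional calculus (resolved via unitization or polynomial approximation) addresses a point the paper's proof passes over silently, and is a worthwhile refinement rather than a deviation.
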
 

\begin{proof}
Let $A$ be a random bounded operator. Since $\Omega$ is uniquely ergodic and $\Omega_l\to\Omega$ in the Chabauty-Fell topology, Theorem~\ref{thm:mainapprox} implies the convergence of the measures $(\mu_l)$ to $\mu$ in the weak-$\ast$ topology. We write $\eta_l = \eta^A_{\Omega_l,\win,\mu_l}$ and $\eta:=\eta^A_{\Omega,\win}$. Let $\phi\in\Cc_c(\RR)$. Then $B:=\phi(A)$ defined by the functional calculus is a random bounded operator. Thus, 
$$
\lim_{l\to\infty} \eta_l(\phi) 
	= \lim_{l\to\infty} 
		\int_{\Delr} 
			\sum_{x \in D} \rho(x) \langle \delta_x, B_D\delta_x \rangle_{D,\win}
		\,d\mu_l(D)
	= \int_{\Delr} 
			\sum_{x \in D} \rho(x) \langle \delta_x, B_D\delta_x \rangle_{D,\win}
		\,d\mu(D)
$$
follows since $(\mu_l)$ converges in the weak-$\ast$ topology to $\mu$ and $D\mapsto\sum_{x \in D} \rho(x) \langle \delta_x, B_D\delta_x \rangle_{D,\win}$ is continuous by Proposition~\ref{prop:RanOpProp}~(i). The latter expression is nothing but $\eta(\phi)$. Consequently, $\mbox{w-}\ast\mbox{-}\lim_{l\to\infty}\eta_l=\eta$ is derived.
\end{proof}

\medskip

The following corollary deals with the case that $H=\{e_H\}$ is the trivial group with continuous window function $\win= 1$.

\begin{cor}[Continuity of the density of states for Delone sets]
\label{cor:Appr-DOS}
Let $G$ be a unimodular lcsc group and $\Delr:=\Del$ be the compact space of Delone sets in $G$ with fixed $U$ and $K$. Consider a uniquely ergodic Delone dynamical system $\Omega\in\IDelG$. 
If $(\Omega_l,\mu_l,G)$
are Delone dynamical systems in $\IDelG$ such that $\Omega_l \to \Omega$ 
in the Chabauty-Fell topology, then, for every self-adjoint random bounded operator $A$,
$$
\mbox{w-}*\mbox{-}\lim_{l \to \infty} \eta^A_{\Omega_l,\mu_l} = \eta^A_{\Omega,\mu}
$$
in the weak-$*$-topology on $\mathcal{M}(\RR)$.
\end{cor}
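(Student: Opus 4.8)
The plan is to derive this corollary as the degenerate specialization of Theorem~\ref{thm:Appr-wgtDOS} to the trivial group $H=\{e_H\}$ with window function $\win=1$, exactly the case announced in the sentence preceding the statement. First I would record the canonical identifications that collapse the weighted $G\times H$-framework onto the unweighted $G$-framework. When $H=\{e_H\}$, the product $G\times H$ is canonically identified with $G$, so that the space $\Delr$ of Delone sets in $G\times H$ becomes the space $\Del$ of Delone sets in $G$ (this is the reading $\Delr:=\Del$ fixed in the corollary), and $\IDelGH$ becomes $\IDelG$. Moreover $\Cc_c(H)\cong\CC$, the projection $\pi_H$ is constant equal to $e_H$, and the constant function $\win=1$ is a legitimate continuous window function, its support being the single (hence compact) point $H$. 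Under these identifications the hypotheses of the corollary — $G$ unimodular lcsc, $\Omega,\Omega_l\in\IDelG$ with $\Omega$ uniquely ergodic and $\Omega_l\to\Omega$ in the Chabauty-Fell topology — are literally the hypotheses of the theorem.

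Next I would check that the analytic data degenerate as expected, so that the two conclusions coincide symbolically. Since $\win(\pi_H(x))=\win(e_H)=1$ for every $x$, the weighted inner product $\langle u,v\rangle_{D,\win}=\sum_{x\in D}\overline{u(x)}\,v(x)\,\win(\pi_H(x))$ reduces to the ordinary $\ell^2(D)$ inner product; hence the weighted Hilbert spaces $\ell^2(D,\win)$ are the unweighted $\ell^2(D)$, and the notion of self-adjoint random bounded operator is the one intended in the corollary. By the naming convention fixed at the end of Definition~\ref{def:wgtDOS} for the trivial-group case, the weighted density of states satisfies $\eta^A_{\Omega_l,\win,\mu_l}=\eta^A_{\Omega_l,\mu_l}$ and $\eta^A_{\Omega,\win}=\eta^A_{\Omega,\mu}$, the latter because unique ergodicity pins down $\mu$ as the only invariant probability measure on $\Omega$.

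With this bookkeeping in place, Theorem~\ref{thm:Appr-wgtDOS} applies verbatim and its conclusion $\mbox{w-}*\mbox{-}\lim_{l\to\infty}\eta^A_{\Omega_l,\win,\mu_l}=\eta^A_{\Omega,\win}$ translates word for word into the asserted weak-$*$ convergence in $\Mm(\RR)$. I do not expect a genuine obstacle here: all the substance — the convergence $\mu_l\to\mu$ supplied by Theorem~\ref{thm:mainapprox} together with the continuity of $D\mapsto\sum_{x\in D}\rho(x)\langle\delta_x,\phi(A_D)\delta_x\rangle_{D,\win}$ from Proposition~\ref{prop:RanOpProp}(i) — is already carried out in the proof of the theorem. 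The only point requiring attention is the purely formal verification that the trivial-group, unit-weight specialization of the window-function machinery is internally consistent, which is immediate from the conventions just recalled.
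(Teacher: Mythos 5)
Your proposal is correct and matches the paper exactly: the paper offers no separate proof of Corollary~\ref{cor:Appr-DOS}, presenting it as the immediate specialization of Theorem~\ref{thm:Appr-wgtDOS} to the trivial group $H=\{e_H\}$ with $\win=1$, which is precisely the reduction you carry out. Your bookkeeping (collapse of the weighted inner product, compact support of $\win=1$ on the one-point group, and the notational convention dropping the index $\win$) is all the verification that is needed.
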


\begin{remark} \label{rem:convergenceofspectra}
(i) As discussed in Remark~\ref{rem:groupoidCalg}, a random bounded operator on a dynamical system can be represented by an element of a groupoid $C^\ast$-algebra induced by the corresponding transversal. Since the isomorphism preserves the spectrum, the spectrum of both elements coincide. Due to Theorem~\ref{thm:TransHullHomeo}, the transversals $(\Tt_l)$ of a sequence of Delone dynamical systems $(\Omega_l,\mu_l,G)$ converge to the transversal $\Tt$ of $(\Omega,\mu,G)$ if and only the Delone dynamical systems converge. Thus, if the action of $G$ on $\Delr$ is amenable, then the convergence of the spectra
$$
\lim_{l\to\infty}\sigma(A_{\Omega_l}) = \sigma(A_\Omega)
$$
in the Hausdorff metric on $\RR$ holds additionally in Theorem~\ref{thm:Appr-wgtDOS} and Corollary~\ref{cor:Appr-DOS} by applying \cite[Theorem~3]{BeBeNi17}.

(ii) It is well-known that symbolic dynamical systems can be encoded as Delone sets \cite{LaPl03}. Hence, our result also applies to those systems. In \cite{BeckusThesis16,BeBeNi17}, several explicit constructions are given for periodic approximations of symbolic dynamical systems. It is proven there that the spectrum converges in the Hausdorff metric. With the convergence of the abstract density of states, Corollary~\ref{cor:Appr-DOS} complements these results via a measured quantity.
\end{remark}


\section{Approximations for cut-and-project schemes} 
\label{sec:CP-schemes}

In the upcoming section, we study the continuity of the autocorrelation and the density of states for the special class of regular models. It is well-known that the arising Delone dynamical systems are uniquely ergodic which enables us to apply the previously developed theory. The approximation is via weighted Delone sets with continuous window functions. Since regular model sets do not belong to that latter class, there are two involved limit processes. While the approximation of the Delone dynamical systems is guaranteed by lattice convergence, another limit takes care of the approximation of the regular window function. Our result includes approximations via periodic structures in the Euclidean situation, where one can choose sequences of lattices of the form $Q \ZZ^d$, where all entries of the $d$-dimensional matrix $Q$ are rational.  

\medskip

The relevant notions and properties of regular model sets and their dynamical systems are discussed in Subsection~\ref{ssec:CP-set}. The convergence of the autocorrelation is provided in Theorem~\ref{thm:Appr-ModAC} of Subsection~\ref{ssec:Appr-CP-AC}. For the class of strongly pattern equivariant operators, the density of states can be described in terms of the $G$-dynamics on the quotient space obtained from the projection lattice.
This allows us to the settle the convergence of the density of states for strongly pattern equivariant Schr\"odinger operators in Subsection~\ref{ssec:Appr-DOS-PEop}, c.f. Theorem~\ref{thm:Appr-ModDOS}.

\subsection{Regular model sets} 
\label{ssec:CP-set}

We now turn to the description of so-called {\em cut-and-project schemes}. Examples for model sets exist in abundance. Concerning the abelian 
and in particular the Euclidean situation, we refer to \cite{BaakeGrimm13} and references therein. 
A list of examples in non-abelian groups can be found in 
\cite{BjHaPo16}. We extensively use a parametrization map introduced in the abelian case by \cite{Sch00} (``torus parametrization'') and in the non-abelian world by \cite{BjHaPo16}. We follow the lines of \cite{BjHaPo16} to introduce the basic notations and concepts.

\medskip

A lattice $\Gamma < G \times H$ is {\em regular} if the restriction of $\pi_G$ to $\Gamma$ is injective and $\pi_H(\Gamma)$ is densely contained in $H$. A compact set $W\subseteq H$ is
called a {\em regular window for $\Gamma$} if 
\begin{itemize}
\item $m_H(\partial W) = 0$,
\item $\partial\,W \cap \pi_H(\Gamma) = \emptyset$,
\item $hW = W \Leftrightarrow h = e_H$, and 
\item $\overline{\mathring{W}} = W$.
\end{itemize}
Here, $\partial W$ denotes the topological boundary of the set $W$.

\begin{definition}[Regular model sets]
\label{def:regModSet}
Let $\Gamma$ be a regular lattice in $G \times H$. For a regular 
window $W \subset H$ for $\Gamma$, we call the set 
$$
P := P\big(G,H,\Gamma, W\big) :=  \pi_G \big( \Gamma \cap (G \times W) \big)
$$
a {\em regular model set}. 
\end{definition}

\begin{remark}\label{rem:identification}
By definition, a regular model set is a subset of $G$. However, since the
restriction of $\pi_G$ to $\Gamma$ is injective by assumption, 
each regular model set $P$ is naturally identified as the set $\Lambda(P) := \Gamma \cap (G \times W)$. 
As such, a regular model set can be seen as a weighted model set with constant weight $1$ if one extends the definition of weighted model sets to possibly non-uniform lattices. 
For cocompact lattices,  regular model sets are naturally identified with weighted model sets (and weighted Delone sets) in the language of Definition~\ref{def:WghtModSet}.
\end{remark}

Regular model sets have very nice properties. First of all, they are uniformly discrete and even stronger, they belong to the class of {\em sets of finite local complexity (FLC-sets)}, i.e.\@ $P^{-1}P$ is closed and locally finite. This observation is standard in the abelian world and it is also easily verified in the non-commutative situation, see \@\cite[Proposition~2.13]{BjHa16}. If
$\Gamma$ is chosen to be cocompact, the empty set is not contained in the hull $\Hh^P$. Thus, by \cite[Proposition~4.4]{BjHa16}, this implies that $P$ must be relatively dense. 
Hence, regular model sets $P$ as just constructed are Delone sets. 

\medskip

One of the main results in the aforementioned paper is that for regular model sets, the translation dynamics can be completely described in terms of the action of $G$ on the homogeneous space $Y:= G \times H /\Gamma$. Since the projection $\pi_H(\Gamma)$ is dense in $H$, the action $G \curvearrowright Y$ is minimal (by the duality principle). Moreover, one can show that in the present situation, this implies that $G \curvearrowright Y$ is uniquely ergodic, cf.\@\cite[Lemma~3.7]{BjHaPo16}. As was shown in the following theorem, these properties transfer to the (punctured) hull.

\begin{theo}[cf.\@ \cite{BjHaPo16}, Theorem~1.1] \label{prop:toruspara}
Let $P = P(G,H,\Gamma, W)$ be a regular model set. Then,
there exists a unique $G$-invariant probability  measure $\mu$  (which is also the unique $G$-stationary measure) on the punctured hull $\Hh^{P \, \times}= \Hh^P \setminus \{ \emptyset \}$ such
that $G \curvearrowright (\Hh^{P \, \times},\mu)$ is measurably isomorphic to $G \curvearrowright 
(Y,m_Y)$. In particular, as unitary $G$-representations, we have
$$
L^2(\Hh^{P \, \times}, \mu) \cong L^2(Y,m_Y).
$$
\end{theo}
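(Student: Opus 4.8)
The plan is to exhibit the punctured hull as a measurable copy of the homogeneous space $Y=G\times H/\Gamma$ by constructing an explicit parametrization map and transporting the unique invariant measure $m_Y$ along it. Identifying $P$ with $\Lambda(P)=\Gamma\cap(G\times W)$ as in Remark~\ref{rem:identification}, I first define $\Lambda\colon Y\to\Hh^{P\,\times}$ on cosets by $\Lambda\big((a,b)\Gamma\big):=(a,b)\Gamma\cap(G\times W)$. Since $\pi_H$ is a homomorphism this is independent of the chosen representative, and because $(g,e_H)(G\times W)=G\times W$ it is $G$-equivariant for the translation action on the first coordinate, with $\Lambda(\Gamma)=\Lambda(P)$. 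As recalled before the theorem, the density of $\pi_H(\Gamma)$ makes $G\curvearrowright Y$ minimal, so $G\cdot\Gamma$ is dense in $Y$; once continuity is in place this forces $\Lambda(Y)$ to be the Chabauty-Fell closure of $G\cdot\Lambda(P)$, namely $\Hh^{P\,\times}$, giving surjectivity.

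Next I would control the discontinuities of $\Lambda$, which occur exactly along the singular set $Y_{\mathrm{sing}}:=\{(a,b)\Gamma : (a,b)\Gamma\cap(G\times\partial W)\neq\emptyset\}$. Here the window regularity enters decisively: by Weil's unfolding formula for the cocompact lattice $\Gamma$, for every $\rho\in\Cc_c(G)$ with $\rho\geq 0$ one has
\[
\int_Y \sum_{\gamma\in\Gamma}\rho\big(\pi_G((a,b)\gamma)\big)\,\mathbf{1}_{\partial W}\big(\pi_H((a,b)\gamma)\big)\,dm_Y
=\Big(\int_G\rho\,dm_G\Big)\,m_H(\partial W)=0,
\]
since $m_H(\partial W)=0$. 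Letting $\rho$ range over a countable family exhausting $G$ forces the nonnegative integrand to vanish $m_Y$-almost everywhere, whence $m_Y(Y_{\mathrm{sing}})=0$. On the full-measure Borel set $Y_{\mathrm{reg}}:=Y\setminus Y_{\mathrm{sing}}$ the map $\Lambda$ is Chabauty-Fell continuous: no lattice point lies over $\partial W$, so (using $\overline{\mathring W}=W$ and $\partial W\cap\pi_H(\Gamma)=\emptyset$) small perturbations of $(a,b)$ neither create nor destroy points of the intersection with the strip.

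The decisive step, and the one I expect to be the main obstacle, is the injectivity of $\Lambda$ on $Y_{\mathrm{reg}}$. Assume $\Lambda(y_1)=\Lambda(y_2)$ for regular cosets $y_i=(a_i,b_i)\Gamma$ and put $(c,d)=(a_2,b_2)^{-1}(a_1,b_1)$; then the two translated lattices agree inside the strip $G\times W$. The task is to propagate this coincidence on the strip to the conclusion $(c,d)\in\Gamma$, i.e.\ $y_1=y_2$. This is where one must use the density of $\pi_H(\Gamma)$, that $W$ is the closure of its interior, and the trivial-stabilizer condition $hW=W\Leftrightarrow h=e_H$ to rule out any accidental coincidence of the point patterns; this coset-reconstruction/aperiodicity argument is genuinely delicate, as it is precisely the place where the geometry of the window must force rigidity of the parametrization.

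Finally I would assemble everything. The restriction $\Lambda|_{Y_{\mathrm{reg}}}$ is a continuous injection from a Borel set of full $m_Y$-measure, so by the Lusin-Souslin theorem its image is Borel and its inverse $\beta$ is Borel measurable. Setting $\mu:=\Lambda_*m_Y$ yields a $G$-invariant Borel probability measure on $\Hh^{P\,\times}$, and $\Lambda$ becomes a measure-preserving $G$-equivariant Borel isomorphism between $(Y,m_Y,G)$ and $(\Hh^{P\,\times},\mu,G)$. Uniqueness of $\mu$ then follows from the unique ergodicity of $G\curvearrowright(Y,m_Y)$ recalled above: any $G$-invariant probability measure on $\Hh^{P\,\times}$ pulls back through $\beta$ to a $G$-invariant probability on $Y$, which must be $m_Y$, so the original measure equals $\mu$; the identification with the unique $G$-stationary measure follows in the same way, using that every $G$-stationary probability on the homogeneous space $Y$ is $G$-invariant. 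The asserted unitary equivalence of $G$-representations is then realized by the Koopman operator $U\colon L^2(\Hh^{P\,\times},\mu)\to L^2(Y,m_Y)$, $Uf:=f\circ\Lambda$, which is unitary because $\Lambda$ preserves the measures and intertwines the two $G$-actions by equivariance.
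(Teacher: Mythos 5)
Note first that the paper does not prove this theorem itself: it is imported from \cite{BjHaPo16}, and the surrounding text only sketches the strategy there, which runs in the opposite direction to yours. One constructs a globally defined, $G$-equivariant parametrization map $\beta:\Hh^{P\,\times}\to Y$ with closed (hence Borel) graph, shows that every $G$-invariant probability measure on the hull pushes forward under $\beta$ to $m_Y$, and then shows that $\beta$ is injective off a null set. Your map $\Lambda$ is the inverse of $\beta$ on the non-singular points, so your route is genuinely reversed. Your null-set computation for $Y_{\mathrm{sing}}$ via Weil's unfolding formula is correct and is essentially how Proposition~\ref{prop:propertiesofXns}~(ii) is obtained.

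There are, however, two genuine gaps. First, the injectivity of $\Lambda$ on $Y_{\mathrm{reg}}$ --- which you yourself flag as the main obstacle --- is not proved; you only indicate which hypotheses must enter. Since this is exactly the content of Proposition~\ref{prop:propertiesofXns}~(i) and is what makes the parametrization an a.e.\ bijection, leaving it open means the measurable isomorphism is not established. Second, and independently, the uniqueness argument fails as written: you pull an arbitrary $G$-invariant probability measure $\nu$ on $\Hh^{P\,\times}$ back through $\beta=(\Lambda|_{Y_{\mathrm{reg}}})^{-1}$, but in your construction $\beta$ is only defined on the image $\Lambda(Y_{\mathrm{reg}})$, and nothing you have shown guarantees $\nu\bigl(\Lambda(Y_{\mathrm{reg}})\bigr)=1$; you know this set is conull for $\mu=\Lambda_*m_Y$, not for an arbitrary invariant $\nu$. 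This is precisely why the cited proof builds $\beta$ on all of $\Hh^{P\,\times}$ first: with a globally defined equivariant Borel map one gets $\beta_*\nu=m_Y$ for \emph{every} invariant $\nu$, and only then does a.e.\ injectivity force $\nu=\mu$. Relatedly, your surjectivity claim (``minimality plus continuity'') is shaky because $\Lambda$ is not globally continuous, so its image need not be closed and need not exhaust the hull. Two smaller points: elements of $\Hh^P$ are subsets of $G$, so $\Lambda$ should read $(a,b)\Gamma\mapsto\pi_G\bigl((a,b)\Gamma\cap(G\times W)\bigr)$ as in Proposition~\ref{prop:propertiesofXns}~(iii); and the assertion that $G$-stationary measures on $Y$ are automatically $G$-invariant is itself a nontrivial input of \cite{BjHaPo16} that you use without justification.
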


In the case of a uniform lattice $\Gamma$ (and this is the case we are interested in), the hull and the punctured hull of a model set 
coincide. 
The proof of the theorem goes along a well-behaved map $\beta: \Hh^{P\, \times} \to Y$,
called {\em parametrization map}. More precisely, $\beta$ is $G$-equivariant,
has a closed graph (cf.\@ \cite[Theorem~3.1]{BjHaPo16}),
and the unique invariant measure $\mu$ is given by the relation $\beta_* \mu = m_Y$,
cf.\@ \cite[Theorem~3.4]{BjHaPo16}. The subset of the hull where $\beta$ is one-to-one is
conull with respect to $\mu$, see further elaborations below. Sticking to the notation in later discussed work, we 
denote 
$$
Y^{\operatorname{ns}} 
	:= \big\{ 
		(g,h)\Gamma 
		\,|\, 
		\partial\big(h^{-1}W_{\win}\big) \cap \pi_H(\Gamma) = \emptyset 
	\big\},
$$
and call this set the {\em non-singular points} of $Y$. The corresponding set of
non-singular points on the hull is written as $\Hh^{\operatorname{ns}}:= \beta^{-1}(Y^{\operatorname{ns}})$. It has been proven in \cite[Section~3]{BjHaPo16} that the following assertions hold, see also \cite{Sch00} for the abelian case.

\begin{proposition}[\cite{BjHaPo16}]
\label{prop:propertiesofXns}
Let $P := P\big(G,H,\Gamma, W\big)$ be a regular model set.
\begin{enumerate}[(i)]
\item The restriction $\beta_{|\Hh^{\operatorname{ns}}}: \Hh^{\operatorname{ns}} \to 
Y^{\operatorname{ns}}$ is bijective.
\item $\Hh^{\operatorname{ns}}$ is a set of full $\mu$-measure in $\Hh^{P\, \times}$, and $Y^{\operatorname{ns}}$ is a set of full 
$m_Y$-measure in $Y$.
\item Let $\mathcal{F}$ be a fundamental domain for $\Gamma$ in $G \times H$. 
For each $Q \in \Hh^{\operatorname{ns}}$, there is a unique choice $(g_Q, h_Q) \in \mathcal{F}$ with $\beta(Q)= (g_Q, h_Q)\Gamma$
and such that 
$$
Q 
	= g_{Q}\, \pi_G\big( \Gamma \cap (G \times h_{Q}^{-1}W) \big) 
	= \pi_G \big((g_Q, h_Q)\Gamma \cap (G \times W) \big).
$$
\item The dynamical system $G \curvearrowright 
Y$ is minimal. If in addition, $\Gamma$ is cocompact, then $G \curvearrowright \Hh^P$ is minimal as well.
\item If $\Gamma$ is uniform, then the parametrization map $\beta: \Hh^P \to Y$ is continuous. 
\end{enumerate}
\end{proposition}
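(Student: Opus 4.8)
The plan is to treat the closed-graph property of $\beta$ and the push-forward relation $\beta_*\mu = m_Y$ recorded above, together with minimality of $G\curvearrowright Y$, as the structural inputs, and to extract all five assertions from them by topological and measure-theoretic arguments organized around the non-singularity condition. I would prove the parts in the order (v), (iii), (i), (ii), (iv), since continuity is the most direct consequence of the inputs and the explicit description then drives everything else.

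I would first establish continuity (v). When $\Gamma$ is uniform the quotient $Y = G\times H/\Gamma$ is compact Hausdorff and the punctured hull coincides with $\Hh^P$, so $\beta$ is defined on all of the compact space $\Hh^P$. A map from a topological space into a compact Hausdorff space whose graph is closed is automatically continuous; combined with the closed graph of $\beta$ this gives (v) at once. As a byproduct $\beta(\Hh^P)$ is compact, hence closed, and contains the orbit $\{(g,e_H)\Gamma \mid g\in G\} = \beta(G\cdot P)$, which is dense by minimality of $G\curvearrowright Y$; thus $\beta$ is onto.

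The explicit description (iii) is where I expect the real work to lie. Given $Q\in\Hh^{\operatorname{ns}}$, write $\beta(Q) = (g_Q,h_Q)\Gamma$ with $(g_Q,h_Q)$ the unique representative in a fundamental domain $\mathcal F$. Pick $g_n\in G$ with $g_n\cdot P\to Q$; continuity forces $(g_n,e_H)\Gamma\to(g_Q,h_Q)\Gamma$ in $Y$, and since the quotient by the discrete group $\Gamma$ is well behaved we may choose $\gamma_n\in\Gamma$ with $a_n := (g_n,e_H)\gamma_n \to (g_Q,h_Q)$ in $G\times H$. Then $g_n\cdot P = \pi_G\big(a_n\Gamma\cap(G\times W)\big)$, and the task is to pass to the limit inside the cut. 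Here the non-singularity condition $\partial(h_Q^{-1}W)\cap\pi_H(\Gamma) = \emptyset$ is decisive: it guarantees that no lattice point lands on the boundary of the limiting translated window, so that each $\gamma\in\Gamma$ with $\pi_H\big((g_Q,h_Q)\gamma\big)$ in the interior of $W$ stays selected for large $n$, while those with $\pi_H\big((g_Q,h_Q)\gamma\big)$ outside the (closed) window stay excluded; on every compact region of $G$ only finitely many points are involved, so the projected cut stabilizes. Uniqueness of limits then identifies $Q$ with $\pi_G\big((g_Q,h_Q)\Gamma\cap(G\times W)\big)$, and the elementary identity $\pi_G\big((g,h)\Gamma\cap(G\times W)\big) = g\,\pi_G\big(\Gamma\cap(G\times h^{-1}W)\big)$ yields the second form. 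The delicate point throughout is the boundary bookkeeping---controlling exactly which lattice points cross $\partial W$ under the perturbation $a_n\to(g_Q,h_Q)$---which is precisely where the regularity of the window is consumed.

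With (iii) available the remaining parts are comparatively short. For bijectivity (i), injectivity of $\beta$ on $\Hh^{\operatorname{ns}}$ holds because (iii) recovers $Q$ from $\beta(Q)$, while surjectivity onto $Y^{\operatorname{ns}}$ follows from surjectivity of $\beta$ together with $\Hh^{\operatorname{ns}} = \beta^{-1}(Y^{\operatorname{ns}})$. For the measure statement (ii), the singular locus $Y\setminus Y^{\operatorname{ns}}$ is the countable union over $\gamma\in\Gamma$ of the sets where $h\,\pi_H(\gamma)\in\partial W$; each is a right translate of $\partial W$, hence $m_H$-null since $m_H(\partial W)=0$ and $H$ is unimodular, so $m_Y(Y\setminus Y^{\operatorname{ns}})=0$ by the quotient (Weil) integration formula, and then $\mu(\Hh^{\operatorname{ns}}) = (\beta_*\mu)(Y^{\operatorname{ns}}) = m_Y(Y^{\operatorname{ns}}) = 1$. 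Finally, minimality (iv): the system $G\curvearrowright Y$ is minimal by the duality principle already invoked, and for the cocompact hull I would show every orbit closure contains $P$. The regular-window assumption $\partial W\cap\pi_H(\Gamma)=\emptyset$ places the base point $(e_G,e_H)\Gamma$ in $Y^{\operatorname{ns}}$ with unique preimage $P$; given $Q\in\Hh^P$, choose $g_n$ with $g_n\cdot\beta(Q)\to(e_G,e_H)\Gamma$ by minimality of $Y$, extract a convergent subsequence $g_n\cdot Q\to Q'$ by compactness of $\Hh^P$, and use continuity together with injectivity on $\Hh^{\operatorname{ns}}$ to conclude $Q'=P$. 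Hence $\overline{G\cdot Q}=\Hh^P$, giving minimality of the hull.
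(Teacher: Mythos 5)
Your proposal is essentially correct, but note that the paper itself supplies no proof of this proposition: it is imported verbatim from \cite{BjHaPo16} (Section~3 there, with \cite{Sch00} for the abelian case), so there is no in-paper argument to compare against. What you have written is a faithful reconstruction of the standard torus-parametrization argument from the three structural inputs the paper does record (closed graph of $\beta$, $\beta_*\mu=m_Y$, minimality of $G\curvearrowright Y$). The individual steps check out: the closed-graph-into-compact-Hausdorff criterion gives (v) and, via density of $\{(g,e_H)\Gamma\}$, surjectivity of $\beta$; the lifting of $(g_n,e_H)\Gamma\to(g_Q,h_Q)\Gamma$ through the local homeomorphism $G\times H\to Y$ and the stabilization of the cut on compact pieces of $G$ (no new points appear because $W$ is closed and the only ambiguous case, a lattice point landing on $\partial(h_Q^{-1}W)$, is excluded by non-singularity) give (iii); and (i), (ii), (iv) then follow as you indicate, with the null-set computation for $Y\setminus Y^{\operatorname{ns}}$ resting on $m_H(\partial W)=0$, countability of $\Gamma$, and the Weil formula. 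You also use, implicitly but legitimately, that $\beta(P)=(e_G,e_H)\Gamma$ and that $(e_G,e_H)\Gamma\in Y^{\operatorname{ns}}$ by the regular-window condition $\partial W\cap\pi_H(\Gamma)=\emptyset$.

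One caveat on scope: parts (i)--(iii) of the proposition are stated for a general regular lattice, whereas your proof of (iii) begins with continuity of $\beta$, which you only establish (and which only holds) for uniform $\Gamma$; moreover your extraction of a convergent subsequence in (iv) and the closed-graph-implies-continuity step both use compactness of $Y$ and of $\Hh^P$. In the non-cocompact case the deduction in (iii) has to be run in the opposite order: one first proves directly that $g_n\cdot P\to \pi_G\big(a\Gamma\cap(G\times W)\big)$ whenever $(g_n,e_H)\gamma_n\to a$ with $a\Gamma$ non-singular, and only then invokes the closed graph to identify $\beta(Q)$. Since the paper explicitly restricts to uniform lattices from this section onward and only ever uses the proposition in that regime, this does not affect the application, but it is worth flagging if you intend your argument to cover the proposition as literally stated.
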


As in previous chapters, we will exclusively deal with uniform lattices from this point on. When considering collections of lattices, we will assume that all elements in this collection are Delone with fixed parameters for uniform discreteness and relative denseness. More precisely, this means that they are contained in a set
$\Delr$ consisting of Delone sets in $G\times H$ with fixed parameters $U$ and $K$. The closed, $G\times H$-invariant subsets are denoted by $\IDelGH$. Whenever we only speak about Delone sets in $G$ we will specify.

\subsection{Approximation of the autocorrelation for regular model sets} 
\label{ssec:Appr-CP-AC}
Using Theorem~\ref{thm:Appr-wgtAC}, we obtain an approximation statement for the autocorrelation of regular model sets. For a cocompact lattice $\Gamma \in \Delr$, the homogeneous space $Y:=G\times H/\Gamma$ admits a unique $G\times H$-invariant, probability measure (the Haar measure) denoted by $m_Y$.

\medskip

Let $\win_l\in\Cc_c(H)$ and $W\subseteq H$ be a compact set. The characteristic function of the set $W$ is denoted by $\chi_W$. Furthermore, $W^\circ$ denotes the interior of the set $W$. We write $\win_l \searrow \chi_W$ if $\lim_{l\to\infty}\win_l(x)=\chi_W(x)$ and $\win_l(x)\geq \chi_W(x)$ for each $x\in H$. Analogously, $\win_l \nearrow \chi_{W^\circ}$ means $\lim_{l\to\infty}\win_l(x)=\chi_{W^\circ}(x)$ and $\win_l(x)\leq \chi_{W^\circ}(x)$ for each $x\in H$.

\begin{theorema}[Continuity of the autocorrelation for regular model sets] 
\label{thm:Appr-ModAC}
Let $P = P(G,H,\Gamma, W)$ be a regular model set.  Assume that $(\win_{l})$ is a sequence of continuous window functions such that  
$\win_l \searrow \chi_W$ pointwise in $H$. If $(\Gamma_n)_n$ are cocompact lattices such that $\Gamma_n \to \Gamma$ in $\Delr$, 
then 
$$
\mbox{w-}*\mbox{-}\lim_{l \to \infty} \lim_{n \to \infty} \gamma_{Y_n, \win_l} 
	= \mbox{w-}*\mbox{-}\lim_{l \to \infty} \gamma_{Y, \win_l}
	= \gamma_{P}
$$
in the weak-$*$-topology on $\mathcal{M}(G)$ where $\gamma_P:=\gamma_{\Hh^P}$ is the autocorrelation of the uniquely ergodic hull $\Hh^P$.
\end{theorema}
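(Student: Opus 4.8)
The plan is to treat the two limits separately: for fixed $l$ I would establish the inner limit $\lim_{n\to\infty}\gamma_{Y_n,\win_l}=\gamma_{Y,\win_l}$ as a direct consequence of Theorem~\ref{thm:Appr-wgtAC}, and then handle the window limit $\lim_{l\to\infty}\gamma_{Y,\win_l}=\gamma_P$ by a dominated-convergence argument through the parametrization map. Granting the inner limit, the first claimed equality is immediate, since for every $l$ the inner limit is literally $\gamma_{Y,\win_l}$, so the two outer limits in $l$ agree term by term.

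For the inner limit I would identify the players as hulls of lattices in $G\times H$. Writing $Y=\Hh^{\Gamma}$ and $Y_n=\Hh^{\Gamma_n}$, each carrying its Haar measure $m_Y$, resp.\ $m_{Y_n}$, which is in particular $G$-invariant, I note that $\Gamma$ is regular, so the $G$-action on $Y$ is uniquely ergodic by the cut-and-project theory recalled before Theorem~\ref{prop:toruspara}. Since $\Gamma_n\to\Gamma$ in $\Delr$, Lemma~\ref{lem:LatDel} gives $\Tt^{\Gamma_n}=\{\Gamma_n\}\to\{\Gamma\}=\Tt^{\Gamma}$ via Proposition~\ref{prop:weakastlattice}, whence Theorem~\ref{thm:TransHullHomeo} applied to $G\times H$ yields $Y_n\to Y$ in the Chabauty-Fell topology on $\ks(\Delr)$. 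Each $Y_n\in\IDelGH$ carries the $G$-invariant measure $m_{Y_n}$ (so no regularity of $\Gamma_n$ is needed), and $\win_l$ is a fixed continuous window function, so Theorem~\ref{thm:Appr-wgtAC} applies verbatim and delivers $\lim_n\gamma_{Y_n,\win_l}=\gamma_{Y,\win_l}$ in the weak-$*$ topology on $\Mm(G)$.

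For the window limit I would pass to an integral representation over $Y$. By Proposition~\ref{prop:ACweight}, using the identification $\Hh^{\Gamma}\cong Y$ with $D=(g,h)\Gamma$, one has $\gamma_{Y,\win_l}(f_1^{*}*f_2)=\int_Y\overline{\Pp_{\win_l}f_1(D)}\,\Pp_{\win_l}f_2(D)\,dm_Y(D)$ for $f_1,f_2\in\Cc_c(G)$. On the other side, the parametrization map $\beta\colon\Hh^{P}\to Y$ is a measurable isomorphism with $\beta_*\mu=m_Y$ (Theorem~\ref{prop:toruspara}), and by Proposition~\ref{prop:propertiesofXns}(iii) every non-singular $Q$ satisfies $Q=\pi_G((g_Q,h_Q)\Gamma\cap(G\times W))$; since $\pi_G$ is injective on $\Gamma$, the ordinary periodization $\sum_{x\in Q}f(x)$ equals $\Pp_{\chi_W}f(\beta(Q))$. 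As $\Hh^{\operatorname{ns}}$ has full $\mu$-measure (Proposition~\ref{prop:propertiesofXns}(ii)), this gives $\gamma_P(f_1^{*}*f_2)=\int_Y\overline{\Pp_{\chi_W}f_1(D)}\,\Pp_{\chi_W}f_2(D)\,dm_Y(D)$. It then remains to let $\win_l\searrow\chi_W$ inside these integrals. Because $\win_l\le\win_1$, all supports lie in the fixed compact set $\supp\win_1$, so for each $D$ the nonzero terms of $\Pp_{\win_l}f(D)$ are indexed by the finite set $D\cap(\supp f\times\supp\win_1)$ independently of $l$; the pointwise convergence $\win_l\to\chi_W$ then yields $\Pp_{\win_l}f(D)\to\Pp_{\chi_W}f(D)$ for every $D$, with the uniform bound $|\Pp_{\win_l}f(D)|\le\Pp_{\win_1}|f|(D)$. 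Since $\Pp_{\win_1}|f_1|,\Pp_{\win_1}|f_2|\in L^2(Y,m_Y)$ by Lemma~\ref{lem:PerProp} and compactness of $Y$, the product is dominated by an $L^1$-function via Cauchy-Schwarz, and dominated convergence gives $\gamma_{Y,\win_l}(f_1^{*}*f_2)\to\gamma_P(f_1^{*}*f_2)$. As $\{f_1^{*}*f_2\}$ is dense in $\Cc_c(G)$ and the bound $\gamma_{Y,\win_l}(f^{*}*f)\le\|\Pp_{\win_1}|f|\|_{L^2(Y)}^2$ is uniform in $l$, this upgrades to weak-$*$ convergence exactly as in Theorem~\ref{thm:Appr-wgtAC}.

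I expect the main obstacle to be the window limit, and within it the integral representation of $\gamma_P$: one must transport the autocorrelation of the sharp-window model set to an integral over the homogeneous space $Y$, and this is precisely where the regularity of the window enters. The conditions $m_H(\partial W)=0$ and $\partial W\cap\pi_H(\Gamma)=\emptyset$ guarantee that the singular set is $m_Y$-null and that the values of $\chi_W$ on the boundary never affect the finitely many relevant lattice points at non-singular $D$, so the discrepancy between $\chi_W$ and $\chi_{W^{\circ}}$ is invisible in the integral. Keeping track of this measure-zero boundary effect, together with securing the uniform support bound from the monotonicity $\win_l\le\win_1$, is the technical heart of the argument; the remaining dominated-convergence bookkeeping is routine.
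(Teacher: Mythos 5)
Your proposal is correct and follows essentially the same route as the paper: the inner limit via Theorem~\ref{thm:Appr-wgtAC} after converting $\Gamma_n\to\Gamma$ into Chabauty-Fell convergence of the hulls $Y_n\to Y$ (Proposition~\ref{prop:weakastlattice} and Theorem~\ref{thm:TransHullHomeo}), and the window limit via the integral representation of Proposition~\ref{prop:ACweight}, the parametrization map of Theorem~\ref{prop:toruspara}, the identity $\Pp_{\chi_W}f(\beta(D))=\Pp f(D)$ on the full-measure set $\Hh^{\operatorname{ns}}$, and dominated convergence with dominating function $\Pp_{\win_1}(|f_1|)\,\Pp_{\win_1}(|f_2|)$. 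The only cosmetic difference is the order in which you transport the integral between $Y$ and $\Hh^P$; the ingredients and logic coincide with the paper's proof.
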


\begin{proof}
For the cocompact lattice, the homogeneous space $Y\subseteq\Delr$ is uniquely ergodic with measure $m_Y$. The convergence $\Gamma_n\to\Gamma$ implies $Y_n\to Y$ in the Chabauty-Fell topology by Proposition~\ref{prop:weakastlattice} and Theorem~\ref{thm:TransHullHomeo}. Hence, Theorem~\ref{thm:Appr-wgtAC} implies
$$
\mbox{w-}*\mbox{-}\lim_{n \to \infty} \gamma_{Y_n, \win_l}   = \gamma_{Y, \win_l} 
$$
for $l \in \NN$. According to Proposition~\ref{prop:ACweight}, we have that 
$$
\gamma_{Y, \win_l,m_Y} \big( f_1^{*} * f_2 \big) 
	= \int_{\Delr} \overline{\Pp_{\win_l}f_1(\Lambda)} \; \Pp_{\win_l}f_2(\Lambda)\,dm_Y(\Lambda)
$$
for all $f_1, f_2 \in \Cc_c(G)$. By Lemma~\ref{lem:PerProp}, the function $\Lambda\mapsto \overline{\Pp_{\win_l}f_1(\Lambda)}
\cdot \Pp_{\win_l}f_2(\Lambda)$ is continuous and converges pointwise to $\overline{\Pp_{\chi_W}(f_1)(\Lambda)} \cdot \Pp_{\chi_W}(f_2)(\Lambda)$. Furthermore, it is dominated by the measurable function $\Lambda\mapsto\Pp_{\win_1}(|f_1|)(\Lambda)\, \Pp_{\win_1}(|f_2|)(\Lambda)$. Hence, the dominated convergence theorem and Theorem~\ref{prop:toruspara} imply 
\begin{align*}
\lim_{l \to \infty} \gamma_{Y, \win_l,m_Y}\big( f_1^{*} * f_2 \big) 
	&= \int_Y \overline{\Pp_{\chi_W}(f_1)(\Lambda)} \; \Pp_{\chi_W}(f_2)(\Lambda) \; dm_Y(\Lambda)\\
	&= \int_{\Hh^P} \overline{\Pp_{\chi_W}(f_1)(\beta(D))} \; \Pp_{\chi_W}(f_2)(\beta(D)) \; d\mu(D),
\end{align*}
since $\beta:\Hh^P\to Y$ induces an isomorphism between $L^2(\Hh^P,\mu)$ and $L^2(Y,m_Y)$. For $D\in\Hh^{\operatorname{ns}}$, Proposition~\ref{prop:propertiesofXns}~(iii) implies that $D=\pi_G(\beta(D)\cap G\times W))$ while $\pi_G$ restricted to $\beta(D)\in Y^{\operatorname{ns}}$ is injective. With this at hand, a short computation yields
$$
\Pp_{\chi_W}(f)(\beta(D)) 
	= \sum_{x\in\beta(D)} f(\pi_G(x)) \chi_W(\pi_H(x)) 
	= \sum_{x\in\beta(D)\cap G\times W} f(\pi_G(x))
	= \sum_{y\in D} f(y)
	= \Pp f(D).
$$
Since $\Hh^{\operatorname{ns}}\subseteq \Hh^P$ has full measure by Proposition~\ref{prop:propertiesofXns}~(ii), the previous considerations imply
$$
\lim_{l \to \infty} \gamma_{Y, \win_l,m_Y}\big( f_1^{*} * f_2 \big) 
	= \int_{\Hh^P} \overline{\Pp f_1(D)} \; \Pp f_2(D) \; d\mu(D)
	= \gamma_P \big( f_1^{*} * f_2 \big).
$$
The dynamical system $G\curvearrowright\Hh^P$ is uniquely ergodic with measure $\mu$ and so $\gamma_P$ is the autocorrelation measure of $\Hh^P$. Hence, $\lim_{l \to \infty}\lim_{n \to \infty} \gamma_{Y_n, \win_l,m_n}=\gamma_P$ follows as the (weighted) autocorrelations are uniquely determined on the set $\{f_1^*\ast f_2\,|\, f_1,f_2\in\Cc_c(G)\}$.
\end{proof}

\begin{remark}
The assertion of Theorem~\ref{thm:Appr-ModAC} holds also if $\win_l \nearrow \chi_{W^\circ}$ pointwise in $H$ where $W^\circ$ is the interior of $W$. This follows by using Lebesgue's monotone convergence theorem instead of the dominated convergence theorem. Furthermore, one exploits that $\Lambda\cap\partial W=\emptyset$ for $\Lambda\in Y^{\operatorname{ns}}$ where $Y^{\operatorname{ns}}\subseteq Y$ has full measure. Consequently, $\Pp_{\chi_{W^\circ}}(f)=\Pp_{\chi_W}(f)$ follows for every $f\in\Cc_c(G)$ on a full measure set.
\end{remark}

\subsection{Approximation of the density of states of strongly pattern equivariant operators for regular model sets} 
\label{ssec:Appr-DOS-PEop}

We consider strongly pattern equivariant Schr\"odinger operators arising from strongly pattern equivariant functions as introduced in \cite{KePu00,Kel03,Kel08}. 
This class fits in the framework of random bounded operators considered
above. In fact, it forms a $\ast$-subalgebra $\PE\subseteq\KfinG$, where $\Rr$ is a finite subset of $P^{-1}P$ modeling the range of the operator. For a continuous window function $\win$ and a lattice $\Gamma\in\Delr$, every kernel $a\in\PE$ determines a random bounded operator $A$ as defined in Definition~\ref{def:RanOp}. We will show below that in this way, we obtain strongly pattern equivariant Schr\"odinger operators which can be represented by coefficients determined by strongly pattern equivariant functions. This class of operators can be seen as operators on the strip in $G\times H$ with continuous window functions $\win$. Specifically, the kernel $a\in\PE$ is lifted to $a^\win$ with associated random bounded operator $A^\win$, c.f. \@ Definition~\ref{def:LiftGen}. A detailed description is given below. This is the nice feature of strongly pattern equivariant Schr\"odinger operators that their spectral properties are compatible with the geometry induced by the cut and project scheme:
while from the topological stand point, the $G$-actions on the hull and on the quotient space $G \times H / \Gamma$ are different, the relevant measured information is preserved. 
Combining this with specific combinatorial features of strongly pattern equivariant Schr\"odinger operators, we can describe their DOS in terms of the dynamics on the homogeneous type space. 
Since the parametrization map is not a homeomorphism, this cannot be expected at all for general
random, bounded operators.
 

\medskip

The goal of this section is to prove the following theorem. 

\begin{theorema}[Continuity of the DOS for regular model sets] 
\label{thm:Appr-ModDOS}
Let $G$ and $H$ be lcsc groups where $G$ is unimodular. Let $P=P(G,H,\Gamma, W)$ be a regular model set. 
Assume that $(\win_{l})$ is a sequence of continuous window functions such that  
$\win_l \searrow \chi_W$ pointwise in $H$. 
Suppose $(\Gamma_n)_n$ are cocompact lattices satisfying $\Gamma_n \to \Gamma$ in $\Delr$. 
Then, for every self-adjoint kernel $a\in\PE$ with associated operator family $A$, we have
$$
\mbox{w-}*\mbox{-}\lim_{l \to \infty} \lim_{n \to \infty} \eta^{A^{\win_l}}_{Y_n, \win_l} 
	= \mbox{w-}*\mbox{-}\lim_{l \to \infty} \eta^{A^{\win_l}}_{Y, \win_l}
	= \eta^A_{\Hh^P}
$$
in the weak-$*$-topology on $\mathcal{M}(\RR)$.
\end{theorema}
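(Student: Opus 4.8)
The plan is to mirror the proof of Theorem~\ref{thm:Appr-ModAC}, replacing the bilinear periodization pairing by the diagonal of the functional calculus; the two limits decouple exactly as there, so I first fix $l$ and send $n\to\infty$, and afterwards send $l\to\infty$. For the inner limit, observe that since $\Gamma$ is cocompact the homogeneous space $Y=G\times H/\Gamma\subseteq\Delr$ is uniquely ergodic with invariant measure $m_Y$, and that $\Gamma_n\to\Gamma$ in $\Delr$ forces $Y_n\to Y$ in the Chabauty-Fell topology by Proposition~\ref{prop:weakastlattice} together with Theorem~\ref{thm:TransHullHomeo}. For each fixed $l$ the lifted kernel $a^{\win_l}$ is self-adjoint (Definition~\ref{def:LiftGen}), so $A^{\win_l}$ is a self-adjoint random bounded operator and Theorem~\ref{thm:Appr-wgtDOS} applies verbatim, yielding the first equality
$$
\mbox{w-}*\mbox{-}\lim_{n\to\infty}\eta^{A^{\win_l}}_{Y_n,\win_l}=\eta^{A^{\win_l}}_{Y,\win_l}.
$$

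The substance lies in the outer limit $l\to\infty$, which I would settle by dominated convergence on the fixed probability space $(Y,m_Y)$. Fixing $\phi\in\Cc_c(\RR)$, Definition~\ref{def:wgtDOS} writes $\eta^{A^{\win_l}}_{Y,\win_l}(\phi)$ as the $m_Y$-integral over $Y$ of
$$
F_l(\Lambda):=\sum_{x\in\Lambda}\rho(\pi_G(x))\,\langle\delta_x,\phi(A^{\win_l}_\Lambda)\delta_x\rangle_{\Lambda,\win_l}.
$$
A uniform dominating function is immediate: since $\win_l\le\win_1$ one has $\|\delta_x\|^2_{\win_l}=\win_l(\pi_H(x))\le\win_1(\pi_H(x))$ and $\|\phi(A^{\win_l}_\Lambda)\|\le\|\phi\|_\infty$, whence $|F_l(\Lambda)|\le\|\phi\|_\infty\,\Pp_{\win_1}(\rho)(\Lambda)$, a function that is continuous by Lemma~\ref{lem:PerProp} and therefore bounded and $m_Y$-integrable on the compact space $Y$. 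It then remains to prove that $F_l$ converges pointwise on the conull set $Y^{\operatorname{ns}}$ of non-singular points (Proposition~\ref{prop:propertiesofXns}(ii)).

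This pointwise convergence is the heart of the argument and the step I expect to be the main obstacle, since it couples three difficulties: the operators $A^{\win_l}_\Lambda$ live on the varying weighted Hilbert spaces $\ell^2(\Lambda,\win_l)$, the dependence on $A$ through $\phi$ is nonlinear, and the limit must be identified with $A_D$ on the genuine model set $D=\pi_G(\Lambda\cap(G\times W))$. I would first use uniform boundedness of $(A^{\win_l}_\Lambda)$ to fix a compact interval containing all spectra and approximate $\phi$ uniformly there by polynomials; by the $\|\phi\|_\infty$-bound above this reduces the problem to the powers $(A^{\win_l}_\Lambda)^k$. Expanding such a power as a weighted sum over paths, each intermediate vertex $z$ carries the weight $\win_l(\pi_H(z))$; the finite range of $a$ keeps the contributing paths locally finite, and the compact supports of $\win_1$ and $\rho$ render the outer sum over $x$ effectively finite, uniformly in $l$. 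On a non-singular point no lattice vertex has $\pi_H\in\partial W$, so every weight $\win_l(\pi_H(z))$ converges to $\chi_W(\pi_H(z))\in\{0,1\}$; in the limit exactly the paths lying in $G\times W$ survive, and by the construction of the lift $a^{\win_l}$ one obtains $\langle\delta_x,(A^{\win_l}_\Lambda)^k\delta_x\rangle_{\Lambda,\win_l}\to\langle\delta_{\pi_G(x)},(A_D)^k\delta_{\pi_G(x)}\rangle_D$ when $\pi_H(x)\in W^\circ$, while the terms with $\pi_H(x)\notin W$ vanish. Since $\pi_G$ restricts to a bijection of $\Lambda\cap(G\times W)$ onto $D$ (Proposition~\ref{prop:propertiesofXns}(iii)), reassembling the finite sum gives
$$
\lim_{l\to\infty}F_l(\Lambda)=\sum_{y\in D}\rho(y)\,\langle\delta_y,\phi(A_D)\delta_y\rangle_D.
$$

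Finally I would combine this pointwise limit with the dominating function via dominated convergence and transport the resulting integral over $Y$ to one over $\Hh^P$ through the parametrization map: $\beta$ restricts to a bijection $\Hh^{\operatorname{ns}}\to Y^{\operatorname{ns}}$ with $\beta_*\mu=m_Y$ (Theorem~\ref{prop:toruspara}), and $\pi_G(\beta(D)\cap(G\times W))=D$ on $\Hh^{\operatorname{ns}}$ (Proposition~\ref{prop:propertiesofXns}(iii)). This identifies $\lim_{l\to\infty}\eta^{A^{\win_l}}_{Y,\win_l}(\phi)$ with $\int_{\Hh^P}\sum_{y\in D}\rho(y)\langle\delta_y,\phi(A_D)\delta_y\rangle_D\,d\mu(D)=\eta^A_{\Hh^P}(\phi)$, the unique ergodicity of $\Hh^P$ legitimizing the notation $\eta^A_{\Hh^P}$; this is precisely the second equality, completing the chain.
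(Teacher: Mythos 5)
Your proposal is correct and follows essentially the same route as the paper's proof: the inner limit via Proposition~\ref{prop:weakastlattice}, Theorem~\ref{thm:TransHullHomeo} and Theorem~\ref{thm:Appr-wgtDOS}; the outer limit via polynomial reduction, a path expansion of the monomials in the generators $s_\gamma^{\win_l}$ whose weights converge pointwise on the non-singular set (this is exactly the content of Lemmas~\ref{lem:techn1} and~\ref{lem:techn2}), dominated convergence, and transfer of the integral from $Y$ to $\Hh^P$ through the parametrization map. The only (harmless) difference is organizational: you apply dominated convergence once for general $\phi$ with the dominating function $\|\phi\|_\infty\,\Pp_{\win_1}(\rho)$ and push the polynomial reduction into the pointwise-limit step, whereas the paper reduces to monomials first and dominates each monomial separately.
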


We first need to introduce the notion of strongly pattern equivariant functions for Delone sets of finite local complexity. We emphasize at this point that all those Delone sets are subsets of $G$. For convenience, we use the symbol $\DelrG$ for all Delone sets in $G$ with fixed parameters $U$ and $K$. 

\medskip

A Delone set $P\in\DelrG$ is called of {\em finite local complexity} if $P^{-1}P$ is closed and discrete. Such a Delone set satisfies $D^{-1}D\subseteq P^{-1}P$ for all elements $D$ in the hull $\Hh^P$ and for every compact subset $F\subseteq G$ a constant $C>0$ such that $\sharp P\cap gK \leq C$ for all $g\in G$ \cite[Appendix~A]{BjHaPo16}. Moreover, the set $\{x^{-1}P\cap F\,|\, x\in P\}$ is finite for every compact $F\subseteq G$.

\begin{definition}
\label{def:PE}
Let $\Hh^P$ be the hull of $P$ with finite local complexity and $\Tt^P:=\{D\in\Hh^P\,|\, e\in D\}$ be its transversal. A function $f:\Tt^P\to\CC$ is called {\em strongly pattern equivariant} if there is an open, relatively compact subset $F\subseteq G$ such that $f(D)=f(D')$ if $D\cap F = D'\cap F$. 
\end{definition}

The following lemma provides a representation of the strongly pattern equivariant functions on $\Tt^P$.

\begin{lemma}
\label{lem:PE}
Let $P\in\DelrG$ be of finite local complexity. A function $f:\Tt^P\to\CC$ is strongly pattern equivariant if and only if there exist an $N\in\NN$ and an open, relatively compact set $F$ satisfying that for every $1\leq j\leq N$ there is a coefficient $p_j\in\CC$ and an element $x_j\in P$ such that
\begin{equation}\label{eq:PE}
f(D) = \sum_{j=1}^N p_j \; \prod_{\gamma\in x_j^{-1}P\cap F} \chi_D(\gamma)
\end{equation}
where $\chi_D(\gamma)=1$ iff $\gamma\in D$ and otherwise $\chi_D(\gamma)=0$. Furthermore, every strongly pattern equivariant function is continuous.
\end{lemma}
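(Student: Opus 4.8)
The plan is to prove the two implications of the equivalence separately and then read off continuity, using throughout that finite local complexity makes the relevant family of patches finite. The implication from \eqref{eq:PE} to strong pattern equivariance I would argue directly: each set $x_j^{-1}P\cap F$ is contained in $F$ and is finite (as $F$ is relatively compact and $P$ has finite local complexity), so every factor $\chi_D(\gamma)$ occurring in \eqref{eq:PE} has $\gamma\in F$, and the right-hand side depends on $D$ only through $D\cap F$. Hence $f(D)=f(D')$ whenever $D\cap F=D'\cap F$, and $f$ is strongly pattern equivariant with the same $F$.

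For the converse, the substantial direction, let $F$ be an open, relatively compact set witnessing strong pattern equivariance. First I would confine the local data to a finite set: since $e\in D$ and $D^{-1}D\subseteq P^{-1}P$ for every $D\in\Tt^P$, we get $D\subseteq P^{-1}P$, whence $D\cap F\subseteq P^{-1}P\cap F$, which is finite by finite local complexity. Replacing $F$ by a union of sufficiently small balls centred at the points of $P^{-1}P\cap F$ --- which changes neither $D\cap F$ for $D\subseteq P^{-1}P$ nor therefore $f$ --- I may additionally assume $\partial F\cap P^{-1}P=\emptyset$. The key geometric step is then to identify the possible patches: writing any $D\in\Tt^P$ as a Chabauty--Fell limit $D=\lim_n x_n^{-1}P$ with $x_n\in P$ (note $x^{-1}P\in\Tt^P$ because $e=x^{-1}x\in x^{-1}P$), the facts that all these sets lie in the uniformly discrete set $P^{-1}P$ and that $\partial F$ misses $P^{-1}P$ force $D\cap F=x_n^{-1}P\cap F$ for all large $n$. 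Hence $\mathcal{Q}:=\{x^{-1}P\cap F\mid x\in P\}$ is a finite set, and it is exactly the set of values of the map $D\mapsto D\cap F$ on $\Tt^P$; the same eventual-equality shows that this map is locally constant.

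It remains to expand $f$ over $\mathcal{Q}$ and to rewrite the resulting indicators as products. Since $f$ is constant on each fibre, $f=\sum_{Q\in\mathcal{Q}} f_Q\,\mathbf{1}_{\{D\cap F=Q\}}$ for suitable $f_Q\in\CC$. For $Q\in\mathcal{Q}$ one has $\prod_{\gamma\in Q}\chi_D(\gamma)=\mathbf{1}_{\{Q\subseteq D\cap F\}}=\sum_{Q'\in\mathcal{Q},\,Q'\supseteq Q}\mathbf{1}_{\{D\cap F=Q'\}}$, the last step because $D\cap F$ always lies in $\mathcal{Q}$. This is the zeta-transform relation in the finite poset $(\mathcal{Q},\subseteq)$, so Möbius inversion expresses each $\mathbf{1}_{\{D\cap F=Q\}}$ as an integer combination of the functions $D\mapsto\prod_{\gamma\in Q'}\chi_D(\gamma)$, $Q'\in\mathcal{Q}$. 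Substituting and collecting coefficients gives $f(D)=\sum_{Q'\in\mathcal{Q}}p_{Q'}\prod_{\gamma\in Q'}\chi_D(\gamma)$ with $p_{Q'}=\sum_{Q\subseteq Q'}f_Q\,\mu_{\mathcal{Q}}(Q,Q')$; choosing for each $Q'$ some $x\in P$ with $Q'=x^{-1}P\cap F$ and relabelling yields \eqref{eq:PE}. Continuity of $f$ then follows from the local constancy of $D\mapsto D\cap F$ established above (equivalently, each product $D\mapsto\prod_{\gamma\in x_j^{-1}P\cap F}\chi_D(\gamma)$ is locally constant on $\Tt^P$).

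I expect the main obstacle to be the patch-identification step: proving that for every $D$ in the transversal the local slice $D\cap F$ is a genuine patch $x^{-1}P\cap F$ of $P$ and that only finitely many such slices occur. This is exactly where finite local complexity (giving $D\subseteq P^{-1}P$ and the finiteness of $P^{-1}P\cap F$) and the careful choice of $F$ with $\partial F\cap P^{-1}P=\emptyset$ (preventing points from entering or leaving $F$ under Chabauty--Fell limits) are indispensable; once this is secured, the Möbius-inversion bookkeeping is routine.
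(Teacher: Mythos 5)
Your proof is correct and follows essentially the same route as the paper: the easy direction is identical, and your M\"obius inversion on the inclusion poset of patches is exactly the paper's recursive definition $p_j := f(x_j^{-1}P)-\sum_{k\in C(j)}p_k$ in different notation (both invert the triangular zeta matrix of the finite poset $\{x^{-1}P\cap F \mid x\in P\}$). Your extra step of shrinking $F$ so that $\partial F\cap P^{-1}P=\emptyset$ is harmless but not needed, since all sets in play lie in the closed discrete set $P^{-1}P$, which already forces $D_n\cap F=D\cap F$ eventually along any Chabauty--Fell convergent sequence.
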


\begin{proof}
Recall that $\{x^{-1}P\cap F'\,|\, x\in P\}$ is finite for every compact set $F'$. Thus, $D_n\to D$ in the Chabauty-Fell topology of $\Tt^P$ implies that $D_n\cap F'=D\cap F'$ for $n$ large enough. Consequently, functions of the form (\ref{eq:PE}) are continuous and strongly pattern equivariant.

Let $f$ be strongly pattern equivariant and $F\subseteq G$ be the corresponding open, relatively compact subset such that $f(D)=f(D')$ whenever $D\cap F = D'\cap F$. Using finite local complexity, there are $x_1,\ldots, x_N\in P$ such that $\{x_1^{-1}P\cap F,\ldots,x_N^{-1}P\cap F\}=\{D\cap F\,|\, D\in\Tt^P\}$. Note that the product $\prod_{\gamma\in x_j^{-1}P\cap F} \chi_D(\gamma)$ is equal to one if and only if $x_j^{-1}P\cap F\subseteq D\cap F$. 

In general, it might happen that $x_j^{-1}P\cap F\subsetneq x_k^{-1}P\cap F$, namely the patch $x_k^{-1}P\cap F$ contains another patch $x_j^{-1}P\cap F$ appearing in $P$. If this is not the case we set $p_j:=f(x_j^{-1}P)$. Otherwise, we need to define the coefficients with more care. In order to do so, let $I(n)$ be the set of $k\in\{1,\ldots, N\}$ such that the patch $x_k^{-1}P\cap F$ contains exactly $n$ other patches appearing in $P$.  More precisely, for $n\in\NN_0:=\NN\cup\{0\}$, $I(n)\subseteq\{1,\ldots, N\}$ is the set of $k\in\{1,\ldots, N\}$ satisfying that there are exactly $n$ elements $x_j$ satisfying that $x_j^{-1}P\cap F\subsetneq x_k^{-1}P\cap F$. Clearly, $\bigsqcup_{n\in\NN_0}I(n)=\{1,\ldots, N\}$ is a disjoint union and $I(n)$ is empty if $n\geq N$. Furthermore, define 
$$
C(j)
	:=\big\{
		k\in\{1,\ldots, N\} 
		\,\big|\, 
		x_k^{-1}P\cap F\subsetneq x_j^{-1}P\cap F 
	\big\}
	\,,\qquad
	1\leq j\leq N\,,
$$
The set $C(j)$ is the collection of subpatches of the patch $x_j^{-1}P\cap F$. By iteration from $n=0$ up to (maximal) $N$, define $p_j:=f(x_j^{-1}P)-\sum_{k\in C(j)} p_k$ for $j\in I(n)$. Note that $C(j)$ is empty for $n=0$ and $j\in I(n)$. Hence, $p_j:=f(x_j^{-1}P)$. With this,  the coefficients $p_j$ are iteratively defined. It is left to show that with this choice of coefficients $p_j$, equation~\eqref{eq:PE} holds for all $D\in\Tt^P$.

Let $D\in\Tt^P$ with $D\cap F=x_L^{-1}P\cap F$ for some $L\in I(n_0)$. The disjointness of the $I(m)$ and the definition of the coefficient $p_L$ yield
\begin{align*}
\sum_{j=1}^N p_j \; \prod_{\gamma\in x_j^{-1}P\cap F} \chi_D(\gamma) 
	= \left( f(x_L^{-1}P) - \sum_{k\in C(L)} p_k \right)
		+ \sum_{m=0}^{N}\; \sum_{j\in I(m)\setminus\{L\}} p_j 
		\prod_{\gamma\in x_j^{-1}P\cap F} \chi_D(\gamma)\,.
\end{align*}
The latter sum is equal to $\sum_{k\in C(L)} p_k$ as the product $\prod_{\gamma\in x_j^{-1}P\cap F} \chi_D(\gamma)$ is one if and only if $x_j^{-1}P\cap F$ is a subpatch of $x_L^{-1}P\cap F$. Thus,
\begin{align*}
\sum_{j=1}^N p_j \; \prod_{\gamma\in x_j^{-1}P\cap F} \chi_D(\gamma) 
	= f(x_L^{-1}P) = f(D)
\end{align*}
follows by using that $f$ is strongly pattern equivariant and $D\cap F=x_L^{-1}P\cap F$.
\end{proof}

\begin{remark}
By exploiting the topology on the transversal in more detail, it is not difficult to show the converse of the previous lemma, i.e.\@ $f:\Hh^P\to\CC$ is strongly pattern equivariant if and only if $f$ is continuous and takes finitely many values, see (in a special case) \cite[Proposition~3.7.2]{BeckusThesis16}. 
\end{remark}

\begin{definition}
\label{def:PEop}
Let $P\in\DelrG$ be of finite local complexity. A collection of operators $A:=\big(A_D:\ell^2(D)\to\ell^2(D)\big)_{D\in\Hh^P}$ is called {\em strongly pattern equivariant Schr\"odinger operator} if each $A_D$ acts as
$$
A_Du (x) 
	= \left(
		\sum_{\gamma\in\Rr}  
			q_\gamma\big(x^{-1}D\big)\, \chi_D(x\gamma^{-1})\, u(x\gamma^{-1}) 
			+ 
			\overline{q_\gamma\big((x\gamma)^{-1}D\big)}\, \chi_D(x\gamma)\, u(x\gamma)
	\right) 
		+ V(x^{-1}D) u(x)
	\,.
$$
where $\Rr\subseteq P^{-1}P$ is finite and the maps $V:\Tt^D\to\RR$ and $q_\gamma:\Tt^D\to\CC,\, \gamma\in\Rr,$ are strongly pattern equivariant. The finite set $\Rr$ is called {\em range of the operator family $A$}.
\end{definition}

Next, suitable kernels are introduced for Delone sets of finite local complexity. First, we show that these kernels generate all finite range Schr\"odinger operators with strongly pattern equivariant potentials, c.f. Proposition~\ref{prop:PEop}. These operators are generalizations of strongly pattern equivariant Schr\"odinger operators on the Cayley graph of a countable group, c.f.\@ e.g.\@ \cite[Theorem~3.7]{BeckusThesis16}. The kernels are defined by functions $\vartheta_\gamma\in\Cc_c(G)$ and hence, they can be lifted to $G\times H$. In this way, we can lift all strongly pattern equivariant Schr\"odinger operators, c.f. Definition~\ref{def:LiftGen}. 

\medskip

For  $P\in\DelrG$ of finite local complexity and $\gamma\in P^{-1}P$, we can fix a function $\vartheta_\gamma\in\Cc_c(G)$ such that $\supp(\vartheta_\gamma)\cap P^{-1}P=\{\gamma\}$ and $\vartheta_\gamma(\gamma)=1$. The existence of $\vartheta_\gamma\in\Cc_c(G)$ follows by Urysohn's lemma and that $P^{-1}P$ is discrete since $P$ is of finite local complexity. Then
$$
s_\gamma(x,D,y):=\vartheta_\gamma(x^{-1} y)
$$ 
defines a kernel of finite type on $\DelrG$. It is straight forward to check that, $s_\gamma$ on $\DelrG$ is a kernel of finite type according to Definition~\ref{def:kernel}. 

\medskip

For $\Rr\subseteq P^{-1}P$ finite, define $\PE$ as the $\ast$-subalgebra of $\KfinG$ generated by $\{s_\gamma\,|\, \gamma\in\Rr\}$. The following statement shows that every strongly pattern equivariant Schr\"odinger operator $A=(A_D)_{D\in\Hh^P}$ arises from a self-adjoint element $a\in\PE$. This statement is a generalization of \cite[Theorem~3.7]{BeckusThesis16} which deals with Cayley graphs of countable groups.

\begin{proposition}
\label{prop:PEop}
Let $P\in\DelrG$ be of finite local complexity. A collection of operators $A:=\big(A_D:\ell^2(D)\to\ell^2(D)\big)_{D\in\Hh^P}$ is a strongly pattern equivariant Schr\"odinger operator if and only if there is a finite $\Rr\in P^{-1}P$ and a self-adjoint $a\in\PE$ such that $A_D=\lambda^D(a), D\in\Hh^P$.
In particular, every strongly pattern equivariant Schr\"odinger operator $A=(A_D)_{D\in\Hh^P}$ is the restriction of a self-adjoint random bounded operator $\tilde{A}=\big(\pi^D(a)\big)_{D\in\DelrG}$ of finite range to the hull $\Hh^P$.
\end{proposition}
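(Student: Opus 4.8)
The plan is to prove the equivalence by translating between the two normal forms through the action of the algebra generators under $\lambda^D$. The crucial input is finite local complexity, which gives $D^{-1}D\subseteq P^{-1}P$ for every $D\in\Hh^P$; since $\supp(\vartheta_\gamma)\cap P^{-1}P=\{\gamma\}$, for $x,y\in D$ the quantity $\vartheta_\gamma(x^{-1}y)$ is nonzero precisely when $y=x\gamma$. I would first record the resulting elementary identities
$$
(\lambda^D(s_\gamma)u)(x)=\chi_D(x\gamma)\,u(x\gamma),
\qquad
(\lambda^D(s_\gamma^*)u)(x)=\chi_D(x\gamma^{-1})\,u(x\gamma^{-1}),
$$
so that each generator is a weighted hop. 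Two consequences then organize the whole argument: the diagonal product $s_\gamma\star s_\gamma^*$ represents multiplication by the patch indicator $x\mapsto\chi_D(x\gamma)=\chi_{x^{-1}D}(\gamma)$ (using $\chi_D(x)=1$ for $x\in D$), and an arbitrary word $s_{\gamma_1}^{\epsilon_1}\star\cdots\star s_{\gamma_k}^{\epsilon_k}$ represents a finite product of such indicators times a single hop by the accumulated displacement.

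For the direction from operators to kernels, suppose $A=(A_D)_{D\in\Hh^P}$ is a strongly pattern equivariant Schr\"odinger operator with range $\Rr_0$ and coefficients $V,q_\gamma$. I would realize each ingredient inside one finitely generated $\ast$-algebra. By Lemma~\ref{lem:PE}, $V$ and each $q_\gamma$ are finite linear combinations of patch-indicator products $\prod_{\delta\in x_j^{-1}P\cap F}\chi_D(\delta)$; invoking the multiplication realization above, the associated diagonal operators $M_{q_\gamma}$ and $M_V$ lie in the $\ast$-subalgebra generated by the finitely many $s_\delta$ that occur. Setting $a_\gamma:=M_{q_\gamma}\star s_\gamma^*$, a direct check gives $\lambda^D(a_\gamma+a_\gamma^*)u(x)=q_\gamma(x^{-1}D)\chi_D(x\gamma^{-1})u(x\gamma^{-1})+\overline{q_\gamma((x\gamma)^{-1}D)}\chi_D(x\gamma)u(x\gamma)$, matching the two hopping terms of Definition~\ref{def:PEop}, while $M_V$ reproduces the potential and is self-adjoint since $V$ is real. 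Then $a:=\sum_{\gamma\in\Rr_0}(a_\gamma+a_\gamma^*)+M_V$ is self-adjoint with $A_D=\lambda^D(a)$, and choosing $\Rr$ to be the finite union of $\Rr_0$ and all patch points appearing in the Lemma~\ref{lem:PE} expansions of $V$ and the $q_\gamma$ guarantees $a\in\PE$.

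For the converse, let $a\in\PE$ be self-adjoint and expand it as a finite linear combination of words in the $s_\gamma,s_\gamma^*$. By the second consequence above, each word contributes a term $c(x^{-1}D)\chi_D(x\delta)u(x\delta)$ with $c$ a product of patch indicators, hence strongly pattern equivariant by Lemma~\ref{lem:PE}. Grouping the finitely many words by total displacement $\delta$, I would obtain
$$
(\lambda^D(a)u)(x)=\sum_{\delta\in S}c_\delta(x^{-1}D)\,\chi_D(x\delta)\,u(x\delta)
$$
for a finite $S\subseteq P^{-1}P$ with each $c_\delta$ strongly pattern equivariant. Self-adjointness of $a$ forces the matrix symmetry $c_\delta(x^{-1}D)=\overline{c_{\delta^{-1}}((x\delta)^{-1}D)}$ whenever $x,x\delta\in D$, and in particular $c_e$ real; selecting one representative $\gamma$ from each pair $\{\delta,\delta^{-1}\}$ and putting $q_\gamma:=c_{\gamma^{-1}}$, $V:=c_e$ recovers exactly the symmetric form of Definition~\ref{def:PEop}. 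The final ``in particular'' assertion is then immediate: since $\PE\subseteq\KfinG$, the self-adjoint kernel $a$ defines a finite-range random bounded operator $\tilde A=(\lambda^D(a))_{D\in\DelrG}$ in the sense of Definition~\ref{def:RanOp}, whose restriction to $\Hh^P$ is $A$.

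I expect the main obstacle to be the faithful dictionary between the coefficient description of Lemma~\ref{lem:PE} and the generated $\ast$-algebra: one must verify that multiplication by an arbitrary strongly pattern equivariant function is synthesized from $s_\delta\star s_\delta^*$-type products, while tracking which finitely many $\delta\in P^{-1}P$ must be placed in the generating set $\Rr$, and dually that the diagonal weights produced by arbitrary words are strongly pattern equivariant and pair up correctly under the involution. The remaining steps -- the explicit action of $s_\gamma$, the displacement bookkeeping, and the self-adjointness symmetry -- are routine once these identifications are in place.
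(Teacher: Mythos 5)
Your proof is correct and follows essentially the same route as the paper: the generator identities you record are exactly the content of Lemma~\ref{lem:RepSgam} (up to the cosmetic choice of $s_\gamma\star s_\gamma^*$ versus $s_\gamma^*\star s_\gamma$ for the patch indicators), and assembling the kernel from Lemma~\ref{lem:PE}-synthesized multiplication operators plus symmetric hop terms is precisely how the paper's sketch proceeds. Your converse direction, grouping words by accumulated displacement and invoking the self-adjointness symmetry of the diagonal coefficients, merely fills in details the paper leaves to the reader.
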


The proof of the proposition relies on the following lemma.

\begin{lemma}
\label{lem:RepSgam}
Let $P\in\DelrG$ be of finite local complexity and $\Rr:=\{\gamma_1,\ldots,\gamma_N\}\subseteq P^{-1}P$. Define $p_{\gamma_j}:=s_{\gamma_j}^\ast\star s_{\gamma_j}$, $a:=p_{\gamma_1}\star\ldots\star p_{\gamma_N}$ and $b:=\sum_{\gamma\in\Rr} s_{\gamma} + s_{\gamma}^\ast$. Then the representations
$$
\lambda^D(a) u (x) 
	= \left(\prod_{\gamma\in\Rr} \chi_{x^{-1}D}(\gamma^{-1}) \right) u(x)
	,\;\;
\lambda^D(b) u (x) 
	= \sum_{\gamma\in\Rr} \chi_{x^{-1}D}(\gamma)\, u(x\gamma) + \chi_{x^{-1}D}(\gamma^{-1})\, u(x\gamma^{-1}),
$$
hold for all $u\in\ell^2(D)$, $x\in D$ and $D\in\Hh^P$.
\end{lemma}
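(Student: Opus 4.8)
The plan is to reduce everything to the pointwise action of the generating kernels $s_\gamma$ and their adjoints $s_\gamma^\ast$ under the representation $\lambda^D$, and then to exploit the multiplicativity of $\lambda^D$ to assemble the formulas for $a$ and $b$. The only structural inputs needed are the defining support property $\supp(\vartheta_\gamma)\cap P^{-1}P=\{\gamma\}$ together with $\vartheta_\gamma(\gamma)=1$, and the fact that $D^{-1}D\subseteq P^{-1}P$ for every $D\in\Hh^P$, which holds because $P$ is of finite local complexity.

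First I would compute the action of a single generator. For $u\in\ell^2(D)$ and $x\in D$, the definition of $\lambda^D$ (with trivial $H$ and weight $1$) gives $\big(\lambda^D(s_\gamma)u\big)(x)=\sum_{y\in D}\vartheta_\gamma(x^{-1}y)\,u(y)$. Since $x,y\in D$ forces $x^{-1}y\in D^{-1}D\subseteq P^{-1}P$, the support condition annihilates every summand except the one with $x^{-1}y=\gamma$, i.e.\@ $y=x\gamma$; using $\vartheta_\gamma(\gamma)=1$ and $\chi_D(x\gamma)=\chi_{x^{-1}D}(\gamma)$ this collapses to $\big(\lambda^D(s_\gamma)u\big)(x)=\chi_{x^{-1}D}(\gamma)\,u(x\gamma)$. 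The same argument applied to $s_\gamma^\ast(x,D,y)=\overline{\vartheta_\gamma(y^{-1}x)}$ yields $\big(\lambda^D(s_\gamma^\ast)u\big)(x)=\chi_{x^{-1}D}(\gamma^{-1})\,u(x\gamma^{-1})$. Summing over $\gamma\in\Rr$ and invoking linearity of $\lambda^D$ produces the asserted formula for $b$ at once.

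For $a$ the key point is that each $p_\gamma=s_\gamma^\ast\star s_\gamma$ becomes a diagonal (multiplication) operator. Since $\lambda^D$ is a $\ast$-representation, $\lambda^D(p_\gamma)=\lambda^D(s_\gamma)^\ast\lambda^D(s_\gamma)$, and composing the two actions computed above I would find $\big(\lambda^D(p_\gamma)u\big)(x)=\chi_{x^{-1}D}(\gamma^{-1})\,u(x)$: applying $\lambda^D(s_\gamma)$ and then $\lambda^D(s_\gamma^\ast)$ produces an outer factor $\chi_{x^{-1}D}(\gamma^{-1})$ and an inner factor $\chi_{\gamma x^{-1}D}(\gamma)=\chi_{x^{-1}D}(e)$, which equals $1$ precisely because $x\in D$. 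Thus $\lambda^D(p_\gamma)$ is multiplication by the function $x\mapsto\chi_{x^{-1}D}(\gamma^{-1})$. As multiplication operators commute and multiply pointwise, multiplicativity of $\lambda^D$ gives $\lambda^D(a)=\lambda^D(p_{\gamma_1})\cdots\lambda^D(p_{\gamma_N})$ acting as multiplication by $\prod_{\gamma\in\Rr}\chi_{x^{-1}D}(\gamma^{-1})$, which is the claim.

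I expect the only place requiring genuine care to be the computation of $p_\gamma$: one must track that the intermediate indicator $\chi_{\gamma x^{-1}D}(\gamma)$ collapses to $1$ by the standing hypothesis $x\in D$, and that the outer indicator $\chi_{x^{-1}D}(\gamma^{-1})$ is exactly what legitimises the shifted argument $x\gamma^{-1}$ lying in $D$. Everything else is a direct consequence of the support property of $\vartheta_\gamma$ and of finite local complexity, so no analytic difficulty arises.
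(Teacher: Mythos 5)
Your argument is correct and follows essentially the same route as the paper: both rest on the support property $\supp(\vartheta_\gamma)\cap P^{-1}P=\{\gamma\}$ together with $D^{-1}D\subseteq P^{-1}P$ to collapse the sums, and on the multiplicativity of $\lambda^D$ to assemble $a$ and $b$ from the generators. The only (immaterial) difference is the order of operations — the paper first computes the convolved kernel $p_\gamma(x,D,y)=\chi_D(x\gamma^{-1})\,\delta_x(y)$ and then applies $\lambda^D$, whereas you represent the generators first and compose the resulting operators.
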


\begin{proof}
Let $\gamma\in\Rr$. Since $D^{-1}D\subseteq P^{-1}P$ and $P^{-1}P\cap\supp(\vartheta_\gamma)=\{\gamma\}$, the value $\vartheta_\gamma(z^{-1} x)$ is one if $z^{-1} x=\gamma$ and otherwise zero for $z^{-1} x\in D^{-1}D$. Thus, a short computation leads to
$$
p_\gamma (x,D,y) 
	= \sum_{z\in D} \overline{s_\gamma(z,D,x)}\, s_\gamma(z,D,y) 
	= \sum_{z\in D} \vartheta_\gamma(z^{-1} x)\, \vartheta_\gamma(z^{-1} y)
	= \chi_D(x\gamma^{-1})\, \delta_x(y)
	.
$$
With this at hand, the corresponding operator defined by the representation $\lambda^D$ satisfies
$$
\lambda^D(p_\gamma) u (x) 
	= \sum_{y\in D} p_\gamma(x,D,y) \, u(y) 
	= \chi_D(x\gamma^{-1})\, u(x)
	.
$$
Similar calculations imply $\lambda^D(s_\gamma) u (x) = \chi_D(x\gamma)\, u(x\gamma)$ and $\lambda^D(s^\ast_\gamma) u (x) = \chi_D(x\gamma^{-1})\, u(x\gamma^{-1})$. Hence, the desired identities follow by using that $\lambda^D$ is a $\ast$-representation.
\end{proof}

\medskip

With this at hand, one obtains the validity of the above proposition.

\medskip

{\bf Proof of Proposition~\ref{prop:PEop}.}
We sketch the proof of the claim of the equivalence using Lemma~\ref{lem:PE} and Lemma~\ref{lem:RepSgam}. The detailed computations are straightforward and left to the reader. According to Lemma~\ref{lem:PE}, every strongly pattern equivariant functions is a finite linear combination of functions of the form $\prod_{\gamma\in x_j^{-1}P\cap F} \chi_D(\gamma)$. Combining this with Lemma~\ref{lem:RepSgam}, every multiplication operator $u(x)\mapsto V(x^{-1}D) u(x)$, with $V$ being a strongly pattern equivariant function, is given by $\lambda^D(a_V)$ where $a_V\in\PE$ is a finite linear combination of elements of the form $a:=p_{\gamma_1}\star\ldots\star p_{\gamma_N}$ and $p_{\gamma_j}:=s_{\gamma_j}^\ast\star s_{\gamma_j}$. Furthermore, the operator $u(x)\mapsto\chi_{x^{-1}D}(\gamma)\, u(x\gamma) + \chi_{x^{-1}D}(\gamma^{-1})\, u(x\gamma^{-1})$ is represented by $\lambda^D(b_\gamma)$ where $b_\gamma:=s_{\gamma} + s_{\gamma}^\ast$ for $\gamma\in\Rr$. Since every strongly pattern equivariant Schr\"odinger operator $A_D$ is a finite linear combination of compositions of such operators, an $a\in\PE$ can be constructed such that $A_D=\lambda^D(a)$. Conversely, every self-adjoint $a\in\PE$ is a finite linear combination of such elements and so it defines a strongly pattern equivariant Schr\"odinger operator. 

For the ``in particular'' statement, we argue as follows: Since $\PE$ is a $\ast$-subalgebra of $\KfinG$ and $a$ is self-adjoint, the random bounded operator $\tilde{A}:=\big(\pi^D(a)\big)_{D\in\DelrG}$ has finite range and is self-adjoint. Furthermore, $A$ is the restriction of $\tilde{A}$ to the hull $\Hh^P$.\hfill$\Box$

\medskip

Next, we define a suitable class of random bounded operators that can be lifted to $G\times H$ by lifting the corresponding generators.

\begin{definition}
\label{def:LiftGen}
Let $\win\in\Cc_c(H)$ be a continuous window function and $P\in\DelrG$ be of finite local complexity. Define the {\em lift of $s_\gamma$} by $s^\win_\gamma\in \Cc\big(X_{\supp(\win)}\big)$ by 
$$
s^\win_\gamma(x,\Lambda,y)
	:=\vartheta_\gamma\big(\pi_G(x^{-1} y)\big)\, \win(\pi_H(x))\, \win(\pi_H(y)).
$$
\end{definition}

With this at hand, $a^\win$ is the corresponding {\em lift of $a\in\PE$} to $\Kfin$ by lifting each of the generators. 
Furthermore, $A^\win$ denotes the associated operator family of $a^\win$. 

\medskip

Before proving Theorem~\ref{thm:Appr-ModDOS}, we need the following two technical lemmas.

\begin{lemma}
\label{lem:techn1}
Let $\win\in\Cc_c(H)$ be a continuous window function and $\gamma_1,\ldots,\gamma_N\in P^{-1}P$. Consider the kernel of finite range $a:=s^{\win}_{\gamma_1}\star\ldots\star s^{\win}_{\gamma_N}$. For $\Lambda\in\Delr$ and $y\in\Lambda$ with $\win(\pi_H(y))\neq 0$, the identity $\langle\delta_y,\, \lambda^\Lambda(a)\delta_y\rangle_{\Lambda,\win} = a(y,\Lambda,y)\, \win^2(\pi_H(y))$ holds and $a(y,\Lambda,y)$ equals to
$$
\sum_{y_1\in \Lambda} \ldots \sum_{y_{N-1}\in \Lambda}
	s^{\win}_{\gamma_1}(y,\Lambda,y_1)\, \win(\pi_H(y_1))\, 
	s^{\win}_{\gamma_2}(y_1,\Lambda,y_2)\, \win(\pi_H(y_2))
	\ldots 
	s^{\win}_{\gamma_N}(y_{N-1},\Lambda,y)\, \win(\pi_H(y_{N-1}))
	\,.
$$
\end{lemma}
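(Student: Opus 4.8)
Both identities follow by unwinding the definitions of the representation $\lambda^\Lambda$, the weighted inner product $\langle\,\cdot\,,\,\cdot\,\rangle_{\Lambda,\win}$, and the convolution $\star$ on $\Kfin$; the only structural fact used is that every sum occurring is finite, which holds because each generator $s^\win_{\gamma_i}$ has finite range and $\Lambda$ is of finite local complexity, so that for fixed endpoints only finitely many intermediate points contribute. The plan is to treat the two assertions separately.

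For the first identity I would compute $\lambda^\Lambda(a)\delta_y$ pointwise. By the definition of the representation,
$$
\big(\lambda^\Lambda(a)\delta_y\big)(x)
	= \sum_{z\in\Lambda} a(x,\Lambda,z)\,\delta_y(z)\,\win(\pi_H(z))
	= a(x,\Lambda,y)\,\win(\pi_H(y)),
$$
since only $z=y$ survives. Pairing this vector with $\delta_y$ in the weighted inner product, only the term $x=y$ is nonzero, and one obtains
$$
\langle\delta_y,\lambda^\Lambda(a)\delta_y\rangle_{\Lambda,\win}
	= a(y,\Lambda,y)\,\win(\pi_H(y))\cdot\win(\pi_H(y))
	= a(y,\Lambda,y)\,\win^2(\pi_H(y)),
$$
which is the claim. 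Note that the hypothesis $\win(\pi_H(y))\neq 0$ serves only to ensure that $\delta_y$ is a genuine nonzero vector of $\ell^2(\Lambda,\win)$; the algebraic identity itself is insensitive to it.

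For the second identity I would expand the $N$-fold convolution $a = s^\win_{\gamma_1}\star\cdots\star s^\win_{\gamma_N}$ by induction on $N$, using the associativity of $\star$ in $\Kfin$. A single step $(b_1\star b_2)(x,\Lambda,y) = \sum_{z\in\Lambda} b_1(x,\Lambda,z)\,b_2(z,\Lambda,y)\,\win(\pi_H(z))$ introduces exactly one summation over an intermediate point $z$ together with exactly one weight $\win(\pi_H(z))$ attached to that point. Iterating this $N-1$ times gives
$$
a(x,\Lambda,y)
	= \sum_{y_1,\ldots,y_{N-1}\in\Lambda}
		s^\win_{\gamma_1}(x,\Lambda,y_1)\,\win(\pi_H(y_1))\cdots
		s^\win_{\gamma_{N-1}}(y_{N-2},\Lambda,y_{N-1})\,\win(\pi_H(y_{N-1}))\,
		s^\win_{\gamma_N}(y_{N-1},\Lambda,y),
$$
and specializing $x=y$ yields precisely the displayed expression for $a(y,\Lambda,y)$, the product of weights $\prod_{i=1}^{N-1}\win(\pi_H(y_i))$ being the same regardless of the order in which the factors are written. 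The only point requiring care, and hence the sole obstacle, is the bookkeeping of these $N-1$ weight factors: one must check that each intermediate variable $y_1,\ldots,y_{N-1}$ acquires exactly one weight and that the two endpoints carry none beyond those already built into the individual kernels $s^\win_{\gamma_i}$. There is no analytic difficulty, since finiteness of all sums legitimizes every rearrangement.
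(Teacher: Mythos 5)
Your proposal is correct and follows essentially the same route as the paper: a direct unwinding of the definitions of $\lambda^\Lambda$ and $\langle\cdot,\cdot\rangle_{\Lambda,\win}$ for the first identity, and induction on $N$ from the single convolution step for the second; the paper merely states these steps more tersely. (A minor remark: the finiteness of the sums comes from uniform discreteness of $\Lambda$ together with the compact supports of $\vartheta_{\gamma_i}$ and $\win$, not from finite local complexity, but this does not affect the argument.)
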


\begin{proof}
The identity $\langle\delta_y,\, \lambda^\Lambda(a)\delta_y\rangle_{\Lambda,\win} = a(y,\Lambda,y)\, \win^2(\pi_H(y))$ follows by a short calculation. Furthermore, we have
$$
s^{\win}_{\gamma_1}\star s^{\win}_{\gamma_2}(y,\Lambda,y) 
	= \sum_{y_1\in\Lambda} s^{\win}_{\gamma_1}(y,\Lambda,y_1)\, s^{\win}_{\gamma_2}(y_1,\Lambda,y)\, \win(\pi_H(y_1))
	\,.
$$
With this, the desired identity follows by induction over $N\in\NN$.
\end{proof}

\begin{lemma}
\label{lem:techn2}
Let $P=P(G,H,\Gamma,W)$ be a regular model set and $\win_l\in\Cc_c(H)$ be a sequence of continuous window functions such that $\win_l\searrow\chi_W$ (or $\win_l\nearrow\chi_{W^\circ}$) pointwise in $H$. For $\gamma_1,\ldots,\gamma_N\in P^{-1}P$, consider $a_l:=s^{\win_l}_{\gamma_1}\star\ldots\star s^{\win_l}_{\gamma_N}$ and $a:=s_{\gamma_1}\star\ldots\star s_{\gamma_N}$. For $\Lambda\in Y^{\operatorname{ns}}$ and $D\in\Hh^{\operatorname{ns}}$ with $\beta(D)=\Lambda$, the identity 
$$
\lim_{l\to\infty} \sum_{y\in\Lambda} \rho(\pi_G(y))\, \langle\delta_y,\, \lambda^\Lambda(a_l)\delta_y\rangle_{\Lambda,\win_l}
	= \sum_{x\in D} \rho(x)\, \langle\delta_x,\, \lambda^D(a)\delta_x\rangle_{D}
$$
holds for any $\rho\in\Cc_c(G)$, where $\beta:\Hh^P\to Y$ is the parametrization map.
\end{lemma}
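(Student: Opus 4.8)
The plan is to unfold both sides into explicit iterated sums by means of Lemma~\ref{lem:techn1}, to observe that the resulting sums are finite and indexed by a single finite set independent of $l$, to pass to the limit term by term, and finally to transport the limiting expression to the model set $D$ through the bijection $\pi_G\colon\Lambda\cap(G\times W)\to D$ coming from Proposition~\ref{prop:propertiesofXns}~(iii).

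First I would rewrite the summand on the left. By Lemma~\ref{lem:techn1}, $\langle\delta_y,\lambda^\Lambda(a_l)\delta_y\rangle_{\Lambda,\win_l}=a_l(y,\Lambda,y)\,\win_l^2(\pi_H(y))$, and $a_l(y,\Lambda,y)$ expands as an iterated sum over intermediate points $y_1,\dots,y_{N-1}\in\Lambda$ in which each kernel $s^{\win_l}_{\gamma_i}$ contributes the $l$-independent factor $\vartheta_{\gamma_i}\big(\pi_G(y_{i-1}^{-1}y_i)\big)$ (with $y_0=y_N=y$) together with several window factors $\win_l$ evaluated at the internal coordinates $\pi_H(y_j)$. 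Thus every summand is a product of finitely many $\vartheta$-factors that do not depend on $l$, times a product of finitely many powers of $\win_l$ evaluated at points $\pi_H(z)$, $z\in\{y,y_1,\dots,y_{N-1}\}$.

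The crucial preparatory step is a uniform finiteness statement, which is also where the first half of the main difficulty lies. Since $\win_l\searrow\chi_W$ (respectively $\win_l\nearrow\chi_{W^\circ}$), all supports are contained in one fixed compact set $W_*\subseteq H$ (namely $\supp(\win_1)$, respectively $W$). Hence a summand can be nonzero for some $l$ only if each of $y,y_1,\dots,y_{N-1}$ lies in $\Lambda\cap(G\times W_*)$, while the compactly supported functions $\rho$ and $\vartheta_{\gamma_i}$ confine all $G$-coordinates to a fixed compact $K'\subseteq G$. As $\Lambda$ is a Delone set, hence uniformly discrete, the set $\Lambda\cap(K'\times W_*)$ is finite and independent of $l$; so both the outer sum in $y$ and all inner sums range over one common finite set (after adjoining zero terms), and the limit $l\to\infty$ may be taken inside the sum. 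For each fixed $z$ one has $\win_l(\pi_H(z))\to\chi_W(\pi_H(z))$ pointwise, and any power collapses because $\chi_W^k=\chi_W$. Here the hypothesis $\Lambda\in Y^{\operatorname{ns}}$, i.e.\ $\pi_H(\Lambda)\cap\partial W=\emptyset$, is decisive and constitutes the conceptual heart of the proof: it guarantees that no relevant internal coordinate sits on $\partial W$, so the limit indicator $\chi_W(\pi_H(z))=\chi_{W^\circ}(\pi_H(z))$ is unambiguous and selects exactly the points of $\Lambda$ in $G\times W=G\times W^\circ$ (this is also why the two monotonicity conventions yield the same limit).

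Consequently the limit of the left-hand side equals the sum of $\rho(\pi_G(y))\prod_{i=1}^N\vartheta_{\gamma_i}\big(\pi_G(y_{i-1}^{-1}y_i)\big)$ over $y,y_1,\dots,y_{N-1}\in\Lambda\cap(G\times W)$. To finish, I would invoke Proposition~\ref{prop:propertiesofXns}~(iii): for $D\in\Hh^{\operatorname{ns}}$ with $\beta(D)=\Lambda$ one has $D=\pi_G(\Lambda\cap(G\times W))$, and $\pi_G$ is injective on $\Lambda$, hence a bijection $\Lambda\cap(G\times W)\to D$. Relabelling $x=\pi_G(y)$ and $x_i=\pi_G(y_i)$ and using that $\pi_G$ is a homomorphism, so that $\pi_G(y_{i-1}^{-1}y_i)=x_{i-1}^{-1}x_i$, the limit becomes $\sum_{x\in D}\rho(x)\sum_{x_1,\dots,x_{N-1}\in D}\prod_{i=1}^N\vartheta_{\gamma_i}(x_{i-1}^{-1}x_i)$. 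Applying the unweighted version of Lemma~\ref{lem:techn1} to $a=s_{\gamma_1}\star\cdots\star s_{\gamma_N}$ on $\DelrG$, this is exactly $\sum_{x\in D}\rho(x)\,a(x,D,x)=\sum_{x\in D}\rho(x)\langle\delta_x,\lambda^D(a)\delta_x\rangle_D$, the right-hand side. The only genuine obstacles are thus the uniform finiteness (resolved by the window supports together with uniform discreteness) and the boundary avoidance (resolved by non-singularity); everything else is bookkeeping of the $\vartheta$- and window-factors.
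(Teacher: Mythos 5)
Your proposal is correct and follows essentially the same route as the paper's proof: expand both sides via Lemma~\ref{lem:techn1}, pass the pointwise limit $\win_l\to\chi_W$ through the finite iterated sums, and transport the result to $D$ via the bijection $\pi_G\colon\Lambda\cap(G\times W)\to D$ from Proposition~\ref{prop:propertiesofXns}~(iii), finishing with the unweighted case of Lemma~\ref{lem:techn1}. Your write-up is in fact somewhat more explicit than the paper's on the two delicate points (the $l$-uniform finiteness of the index sets and the role of non-singularity in avoiding $\partial W$), which the paper only treats implicitly.
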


\begin{proof}
For each $l\in\NN$, we have 
$$
s^{\win_l}_{\gamma_i}(z,\Lambda,z')
	= \vartheta_{\gamma_i}\big(\pi_G(z)^{-1}\pi_G(z')\big)\, \win_l(\pi_H(z))\, \win_l(\pi_H(z'))
	\,,\qquad
	z,z'\in\Lambda
$$
and the projection $\pi_G:G\times H\to G$ is a group homomorphism. Thus, Lemma~\ref{lem:techn1} implies that $\lim_{l\to\infty}\langle\delta_y,\, \lambda^\Lambda(a_l)\delta_y\rangle_{\Lambda,\win_l}$ equals to
$$
\chi_W(\pi_H(y)) \sum_{y_1\in \Lambda\cap G\times W} \ldots \sum_{y_{N-1}\in \Lambda\cap G\times W}
	\vartheta_{\gamma_1}\left(\pi_G(y^{-1}y_1)\right)	
	\vartheta_{\gamma_2}\left(\pi_G(y_1^{-1}y_2)\right)
	\ldots
	\vartheta_{\gamma_N}\left(\pi_G(y_{N-1}^{-1}y)\right)
$$
by the pointwise convergence $\win_l\searrow\chi_W$ and as all involved sums are finite. Since $\beta(D)=\Lambda$ and $P$ is a regular model set, Proposition~\ref{prop:propertiesofXns}~(iii) yields $\pi_G(\Lambda\cap G\times W)=D$ while $\pi_G:\Lambda\cap G\times W \to D$ is injective. Hence, $\lim_{l\to\infty} \sum_{y\in\Lambda} \rho(\pi_G(y))\, \langle\delta_y,\, \lambda^\Lambda(a_l)\delta_y\rangle_{\Lambda,\win_l}$ is equal to
$$
\sum_{x\in D} \rho(x) 
	\sum_{x_1\in D} \ldots \sum_{x_{N-1}\in D}
		\vartheta_{\gamma_1}\left(x^{-1}x_1\right)	
		\vartheta_{\gamma_2}\left(x_1^{-1}x_2\right)	
		\ldots
		\vartheta_{\gamma_N}\left(x_{N-1}^{-1}x\right)	
	= \sum_{x\in D} \rho(x)\, \langle\delta_x,\, \lambda^D(a)\delta_x\rangle_{D}
$$
by using Lemma~\ref{lem:techn1} again for the trivial group $H=\{e_H\}$ with $\win=1$.
\end{proof}

\medskip

We are now ready to prove the main theorem of this subsection.

\medskip

{\bf Proof of Theorem~\ref{thm:Appr-ModDOS}.}
For the cocompact lattice, the homogeneous space $Y\in\IDelGH$ is uniquely ergodic with measure $m_Y$. The convergence $\Gamma_n\to\Gamma$ implies $Y_n\to Y$ in the Chabauty-Fell topology by Proposition~\ref{prop:weakastlattice} and Theorem~\ref{thm:TransHullHomeo}. Hence, for fixed $l \in \NN$, Theorem~\ref{thm:Appr-wgtDOS} implies
$$
\mbox{w-}*\mbox{-}\lim_{n \to \infty} \eta^{A^{\win_l}}_{Y_n, \win_l}   = \eta^{A^{\win_l}}_{Y, \win_l}.
$$
for the strongly pattern equivariant Schr\"odinger operators of finite range $A^{\win_l}$ induced by the lift $a^{\win_l}$, see Definition~\ref{def:LiftGen}.

For $\phi\in\Cc(\RR)$, $\phi(A^{\win_l})$ is defined by the functional calculus and can be approximated by polynomials of $A^{\win_l}$. Hence, it suffices to show $\lim_{l \to \infty} \eta^{A^{\win_l}}_{Y, \win_l}(p) = \eta^A_{\Hh^P}(p)$ for every polynomial $p$. By definition, $p(a)$ for any polynomial $p$ and $a\in\PE$ is a finite sum of monomials in $\{s_\gamma\,|\,\gamma\in\Rr\}$ and their adjoint and similarly for $a^{\win_l}$. Thus, it suffices by linearity to show that $\lim_{l \to \infty} \eta^{B^{\win_l}}_{Y, \win_l}(\operatorname{Id}) = \eta^B_{\Hh^P}(\operatorname{Id})$ where $B$ is an operator family induced by such a monomial and $B^{\win_l}$ is the corresponding lift. Since $s_\gamma^\ast = \vartheta_{\gamma^{-1}}\circ^{-1}$, there is no loss of generality in proving the desired identity only for monomials in $\{s_\gamma\,|\,\gamma\in\Rr\}$. So let $b:= s_{\gamma_1}\star\ldots\star s_{\gamma_N}$ where $\gamma_i\in\Rr$ and $b_l:=s^{\win_l}_{\gamma_1}\star\ldots\star s^{\win_l}_{\gamma_N}$ be the corresponding lift. Furthermore, set $B_\Lambda^{\win_l}:=\lambda^\Lambda(b_l)$ for $\Lambda\in Y$ and $B_D:=\lambda^D(b)$ for $D\in\Hh^P$.

The map $f_{\win_l}:Y\to\CC$ defined by
$
f_{\win_l}(\Lambda) \;
	:= \; \sum_{y\in\Lambda} \rho\big(\pi_G(y)\big) \; \langle \delta_y, B_\Lambda^{\win_l} \delta_y\rangle_{\Lambda,\win_l}
$
is continuous by Proposition~\ref{prop:RanOpProp} and 
$$
0\leq |f_{\win_l}(\Lambda)|
	\leq \sum_{y\in\Lambda} \rho\big(\pi_G(y)\big) \; 
		\langle \delta_y, \lambda^\Lambda(|b^{\win_1}|) 
		\delta_y\rangle_{\Lambda,\win_1}
$$ 
for all $\Lambda\in Y$ and $l\in\NN$. The latter function in $\Lambda$ is continuous and hence integrable. Consequently, the dominated convergence theorem implies
$$
\lim_{l\to\infty} \eta^{B^{\win_l}}_{Y, \win_l}(\operatorname{Id})
	= \int_Y \lim_{l\to\infty} \sum_{y\in\Lambda} \rho(\pi_G(y))\, \langle\delta_y,\, \lambda^\Lambda(b_l)\delta_y\rangle_{\Lambda,\win_l} \, dm_Y(\Lambda)
	\,.
$$
With this at hand, Theorem~\ref{prop:toruspara} and Proposition~\ref{prop:propertiesofXns} yield
$$
\lim_{l\to\infty} \eta^{B^{\win_l}}_{Y, \win_l}(\operatorname{Id})
	= \int_{\Hh^{\operatorname{ns}}} \lim_{l\to\infty} \sum_{y\in\beta(D)} \rho(\pi_G(y))\, \langle\delta_y,\, \lambda^{\beta(D)}(b_l)\delta_y\rangle_{\beta(D),\win_l} \, d\mu(D)
	\,.
$$
Using Lemma~\ref{lem:techn2}, $\lim_{l \to \infty} \eta^{B^{\win_l}}_{Y, \win_l}(\operatorname{Id}) = \eta^B_{\Hh^P}(\operatorname{Id})$ is deduced as $\Hh^{\operatorname{ns}}\subseteq\Hh^P$ has full measure (Proposition~\ref{prop:propertiesofXns}~(ii)).\hfill$\Box$

\begin{remark}
The assertion of Theorem~\ref{thm:Appr-ModDOS} holds also if $\win_l \nearrow \chi_{W^\circ}$ pointwise in $H$ where $W^\circ$ is the interior of $W$. This follows by using Lebesgue's monotone convergence theorem instead of the dominated convergence theorem. Furthermore, one exploits that $\Lambda\cap\partial W=\emptyset$ for $\Lambda\in Y^{\operatorname{ns}}$ where $Y^{\operatorname{ns}}\subseteq Y$ has full measure. 
\end{remark}


\section{Appendix} \label{sec:appendix}

\subsection{Chabauty-Fell topology} 
\label{ssec:appendix-ChaFe}

For convenience of the reader, we give a proof for the well-known fact that translation actions on Chabauty-Fell spaces are jointly continuous. 

\begin{proposition}
\label{prop:ActClCon}
Let $G$ be a lcsc acting on a compact, metrizable space $X$ by homeomorphisms. Then, $G$ also
acts on $\cs(X) = \ks(X)$ via homeomorphisms, i.e\@ the action mapping 
$$
\alpha:G\times\cs(X)\to\cs(X)\,,\qquad 
	(g,\Ff)\mapsto \alpha(g,\Ff):= g\cdot \Ff := \{g\cdot x\, |\, x\in \Ff\}
$$
is jointly continuous.
\end{proposition}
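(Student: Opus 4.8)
The plan is to verify directly that $\alpha$ pulls back subbasic Chabauty--Fell open sets to open subsets of $G\times\cs(X)$; since $X$ is compact we have $\cs(X)=\ks(X)$, and the action map $G\times X\to X$ is jointly continuous by the standing assumption that $(X,G)$ is a topological dynamical system. Recall that the Chabauty--Fell topology is generated by the \emph{hit} sets $\vs(\emptyset,\{O\})=\{Y\mid O\cap Y\neq\emptyset\}$ for $O\subseteq X$ open, together with the \emph{miss} sets $\vs(K,\emptyset)=\{Y\mid K\cap Y=\emptyset\}$ for $K\subseteq X$ compact. The key simplification is that, since each $g$ acts by a homeomorphism, $O\cap g\Ff\neq\emptyset$ is equivalent to $g^{-1}O\cap\Ff\neq\emptyset$ and $K\cap g\Ff=\emptyset$ is equivalent to $g^{-1}K\cap\Ff=\emptyset$. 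Hence the preimages are $\alpha^{-1}\big(\vs(\emptyset,\{O\})\big)=\{(g,\Ff)\mid g^{-1}O\cap\Ff\neq\emptyset\}$ and $\alpha^{-1}\big(\vs(K,\emptyset)\big)=\{(g,\Ff)\mid g^{-1}K\cap\Ff=\emptyset\}$, and it suffices to show these are open.

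For a hit set I would fix $(g,\Ff)$ in the preimage and pick $x\in\Ff$ with $gx\in O$. By joint continuity of the action there are open neighbourhoods $N\ni g$ in $G$ and $B\ni x$ in $X$ such that $hz\in O$ whenever $h\in N$ and $z\in B$. Then $N\times\vs(\emptyset,\{B\})$ is a neighbourhood of $(g,\Ff)$ contained in the preimage: for $h\in N$ and any $\Ff'$ meeting $B$, choosing $z\in B\cap\Ff'$ gives $hz\in O\cap h\Ff'$, so $h^{-1}O\cap\Ff'\neq\emptyset$.

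The miss set is the crux, precisely because $g^{-1}K$ moves as $g$ varies. Given $(g,\Ff)$ with $g^{-1}K\cap\Ff=\emptyset$, I would first use regularity of the compact Hausdorff space $X$ to separate the compact set $g^{-1}K$ from the closed set $\Ff$ by an open set $W$ with $g^{-1}K\subseteq W$ and $\overline{W}\cap\Ff=\emptyset$; set $K':=\overline{W}$, which is compact. The main technical step is a tube-lemma argument: the map $(h,k)\mapsto h^{-1}k$ is continuous (using continuity of inversion in the lcsc group $G$) and sends $\{g\}\times K$ into $W$, so covering the compact set $K$ by finitely many product neighbourhoods on which this map lands in $W$ and intersecting their $G$-factors yields a neighbourhood $N\ni g$ with $h^{-1}K\subseteq W\subseteq K'$ for all $h\in N$. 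Then $N\times\vs(K',\emptyset)$ is a neighbourhood of $(g,\Ff)$ inside $\alpha^{-1}\big(\vs(K,\emptyset)\big)$, since for $h\in N$ and any $\Ff'$ missing $K'$ one has $h^{-1}K\cap\Ff'\subseteq K'\cap\Ff'=\emptyset$.

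Combining the two cases establishes joint continuity of $\alpha$. Finally, each $\alpha(g,\cdot)$ is a bijection of $\cs(X)$ with inverse $\alpha(g^{-1},\cdot)$, and both are continuous by what was just shown, so $G$ indeed acts by homeomorphisms. I expect the miss-set step---producing a single compact set $K'$ that traps $h^{-1}K$ for all $h$ near $g$ while remaining disjoint from $\Ff$---to be the only genuine obstacle; the hit-set case and the reduction via $g^{-1}$ are routine unwindings of the definitions.
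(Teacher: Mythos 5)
Your proposal is correct and follows essentially the same route as the paper's proof: the hit sets are handled by joint continuity of the action on $X$, and the miss set by separating $g^{-1}K$ from $\Ff$ via normality and then using compactness of $K$ to find a uniform neighbourhood of $g$ (your tube-lemma phrasing is the paper's finite covering by products $V_x\cdot V'_x\subseteq V'$ in different clothing). The only cosmetic difference is that you verify continuity on the hit/miss subbase separately, whereas the paper treats a full basic neighbourhood $\us(K,\os)$ at once.
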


\begin{proof}
Let $g\in G$ and $F\in\cs(X)$. Consider a Chabauty-Fell neighborhood $\us(K,\os)$ of $g\cdot F$ with $\os:=\{O_1,\ldots,O_n\}$. Then, for $i=1,\ldots,n$, there are $x_i\in F$ such that $gx_i\in g\cdot F\cap O_i$. Due to continuity of the group action, there is an open neighborhood $U_i$ of $g$ and an open neighborhood $O_i'$ of $x_i$ such that $U_i\cdot O_i'\subseteq O_i$.

Since $F\cap g^{-1}\cdot K=g\cdot F\cap K=\emptyset$ and $X$ is a normal space, there is an open set $V'$ containing $g^{-1}\cdot K$ such that $\overline{V'}$ is compact and $\overline{V'}\cap F=\emptyset$. For each $x\in K$, there are open sets $V_x\ni g^{-1}$ and $V'_x\ni x$ satisfying $V_x\cdot V'_x\subseteq V'$ by continuity of the group action and as $g^{-1}x\in V'$. By compactness of $K$, there are $x_1,\ldots, x_m\in K$ with $V_l:=V_{x_l}$ and $V'_l:=V'_{x_l}$ satisfying $K\subseteq \bigcup_{l=1}^m V'_l$. Note that $V_l^{-1}$ is an open neighborhood of $g$.

The finite intersection $V:=\bigcap_{i=1}^n U_i \,\cap\, \bigcap_{l=1}^m V_l^{-1}$ is an open neighborhood of $g$. Further, define the Chabauty-Fell open set $\us:=\us\big(\overline{V'},\{O'_1,\ldots, O'_n\}\big)$. By construction, $O'_i$ intersects $F$ for every $i=1,\ldots, n$ and $\overline{V'}\cap F=\emptyset$. Hence, $\us$ is an Chabauty-Fell open neighborhood of $F$. We show that $V\cdot\us=\alpha(V,\us)\subseteq\us(K,\os)$: 

Let $h\in V$ and $F'\in\us$. Since $F'\in\us$, there exists an $y_i\in F'\cap O'_i\neq\emptyset$ and by the previous considerations $hy_i\in V\cdot O'_i\subseteq U_i\cdot O'_i\subseteq O_i$ holds. Hence, $h\cdot F'\cap O_i\neq\emptyset$ for $i=1,\ldots,n$. Furthermore, we derive
$$
 F'\cap h^{-1}\cdot K
	\subseteq F'\cap \left( h^{-1}\cdot \bigcup_{l=1}^m V'_l\right)
	\subseteq F'\cap \left(\bigcup_{l=1}^m h^{-1}\cdot V'_l\right)
	\subseteq F'\cap \overline{V'}
	=\emptyset
$$
by using that $h^{-1}\in V_l$ and the inclusion $V_l\cdot V'_l\subseteq V'$ holds. Hence, $h\cdot F' \cap K = \emptyset$ follows. Altogether, $h\cdot F'$ is contained in $\us(K,\os)$.
\end{proof}

\subsection{Groupoids, von-Neumann algebras and the density of states measure} 
\label{ssec:appendix-Groupoid}

Based on the non-commutative integration theory of A. Connes \cite{Con78}, the (integrated) density of states is described in \cite{LeSt03-Algebras,LeSt03-Delone,LePeVe07} in the setting of von Neumann algebras arising by a groupoid structure. In the following, we recall this setting in the specific case of Delone dynamical systems on lcsc groups.

\medskip

Let $G$ and $H$ be lcsc groups where $G$ is unimodular. Consider the compact space of Delone sets $\Delr:=\Del$ in $G\times H$ with some fixed parameters $U,K$. The group $G$ acts continuously on $\Delr$ by translation $gD:=(e_H,g)D$. Suppose $\Omega\in\IDelGH$ is equipped with a $(G\times H)$-invariant measure $\mu$ on $\Omega$. Furthermore, let $\win\in\Cc_c(H)$ be a (continuous) window function. We introduce a von Neumann algebra $\Nn(\Omega,G\times H,\mu,\win)$ of random bounded operators on $\Omega$. The purpose of this subsection is to introduce the density of states by showing

\begin{proposition}
\label{prop:GenTrace}
Let $G$ be  $\rho\in\Cc_c(G)$ be nonnegative satisfying $\int_G \rho\, dm_L=1$. The map $\tau_{\Omega,\win}:\Nn^+(\Omega,\mu,G,\win)\to [0,\infty)$ defined by
$$
\tau_{\Omega,\win}(A) 
	:= \int_{\Omega} \;
		\sum_{x \in D} 
			\rho(\pi_G(x))
			\,\big\langle 
				\delta_{x}, \phi\big( A_{D} \big) \delta_x
			\big\rangle_{D,\win}\, 
	d\mu(D),
$$
is faithful, normal, finite trace and it does not depend on the choice of $\rho$. Moreover, $\tau$ uniquely and continuously extends to $\Nn(\Omega,G\times H,\mu,\win)$.
\end{proposition}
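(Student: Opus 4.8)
The plan is to reduce everything to finite-range kernels, dispatch well-definedness and the independence of $\rho$ by an invariance argument, then obtain traciality by recognizing $\tau_{\Omega,\win}$ as the canonical trace attached to an invariant transverse measure, and finally read off faithfulness, normality and the extension. First I would reduce to kernels $a\in\Kfin$ of finite range: these are norm-dense in $\Cfin$, and every assertion is stable under norm limits of (positive) operators, so it suffices to argue for finite range. For such $a$, Proposition~\ref{prop:RanOpProp}(i) shows that $D\mapsto\sum_{x\in D}\rho(\pi_G(x))\langle\delta_x,A_D\delta_x\rangle_{D,\win}$ is continuous, hence measurable, and bounded by $\|A\|$ times a constant depending only on $\rho$ and $\win$ (the relevant $x$ lie in a fixed compact set and $D$ is uniformly discrete); integrating over the probability space $\Omega$ yields finiteness. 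For $A\geq 0$ the integrand is nonnegative, so $\tau_{\Omega,\win}$ is positive on the positive cone.

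Next, for independence of $\rho$, the crucial point is that by the $G$-equivariance $T_gA_DT_g^{*}=A_{gD}$ of Proposition~\ref{prop:RanOpProp}(ii) together with $\win(\pi_H(gx))=\win(\pi_H(x))$, the function $F(D,x):=\langle\delta_x,A_D\delta_x\rangle_{D,\win}$ satisfies $F(gD,gx)=F(D,x)$. Using the $G$-invariance of $\mu$ and the substitution $x=g\xi$, one checks that the functional $T(\psi):=\int_\Omega\sum_{x\in D}\psi(\pi_G(x))\,F(D,x)\,d\mu(D)$ obeys $T(\psi)=T(\psi(g\,\cdot\,))$ for every $g\in G$; that is, $T$ is a left-translation invariant positive linear functional on $\Cc_c(G)$. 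By uniqueness of the Haar measure, $T(\psi)=c\int_G\psi\,dm_G$, so $T(\rho)$ depends on $\rho$ only through $\int_G\rho\,dm_G=1$. This is precisely where unimodularity (via uniqueness of Haar measure) enters.

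The main obstacle is the trace identity $\tau_{\Omega,\win}(a\star b)=\tau_{\Omega,\win}(b\star a)$. Here I would realize $\tau_{\Omega,\win}$ as the canonical trace on the groupoid von Neumann algebra $\Nn(\Omega,G\times H,\mu,\win)$: the $G$-invariant measure $\mu$ together with the counting measures on the Delone sets defines a transverse measure, and the unimodularity of $G$ is exactly what makes this transverse measure invariant under groupoid inversion, placing us in the admissible setting of \cite{LePeVe07}. Concretely, using $\langle\delta_x,\lambda^D(a\star b)\delta_x\rangle_{D,\win}=(a\star b)(x,D,x)\,\win(\pi_H(x))^{2}$ and the $\rho$-independence established above, I would pass to the transversal description and perform the substitution $D\mapsto z^{-1}D$ (groupoid inversion), which interchanges the two summation variables and carries $a\star b$ into $b\star a$ while preserving the transverse measure; the identity then follows. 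I expect the genuine work to be the identification of the $\rho$-weighted hull formula with the transversal/groupoid formula and the verification that the transverse measure really is inversion-invariant.

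Finally, faithfulness follows because $\tau_{\Omega,\win}(A^{*}A)=0$ forces the nonnegative integrand $\sum_{x\in D}\rho(\pi_G(x))\|A_D\delta_x\|_{D,\win}^{2}$ to vanish $\mu$-almost everywhere; letting $\rho$ vary over a countable family of admissible normalizations whose supports cover $G$ (allowed by the independence just proved) yields a single $\mu$-conull set on which $A_D\delta_x=0$ for all $x\in D$, whence $A_D=0$ for $\mu$-almost every $D$, i.e. $A=0$ in $\Nn(\Omega,G\times H,\mu,\win)$. Normality is immediate from the integral representation by monotone convergence along increasing nets of positive operators, and finiteness was already noted. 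Since $\tau_{\Omega,\win}$ is a finite positive trace on the norm-dense $*$-subalgebra $\Cfin$ and is defined on the full positive cone, it extends uniquely by linearity and normality to all of $\Nn(\Omega,G\times H,\mu,\win)$, which completes the proof.
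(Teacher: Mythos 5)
Your route differs from the paper's in two substantive ways, and one of them opens a genuine gap. On the positive side, your argument for the independence of $\rho$ --- showing that $T(\psi):=\int_\Omega\sum_{x\in D}\psi(\pi_G(x))F(D,x)\,d\mu(D)$ is a left-invariant positive linear functional on $\Cc_c(G)$ and invoking uniqueness of the Haar measure --- is correct and arguably more transparent than the paper's reduction to Lemma~\ref{lem:RanVarInd} (which rests on the transverse-function formalism of Lenz--Peyerimhoff--Veseli\'c). One small correction there: uniqueness of the \emph{left} Haar measure holds for any locally compact group, so unimodularity does not enter at that step; it is needed only later, for the inversion invariance behind traciality. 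Your faithfulness argument (covering $G$ by countably many admissible $\rho$'s and using the already-established independence) is also fine.

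The gap is in the traciality step and the final extension. You establish $\tau(a\star b)=\tau(b\star a)$ for finite-range kernels and then claim the trace property propagates because $\Cfin$ is ``norm-dense'' in $\Nn(\Omega,G\times H,\mu,\win)$. This is false: a von Neumann algebra is norm-closed, so norm-density of the $C^\ast$-subalgebra $\Cfin$ would force equality, which does not hold here. What you would actually need is $\sigma$-weak density of $\Cfin$ in $\Nn(\Omega,G\times H,\mu,\win)$ together with normality of $\tau$ and a Kaplansky-type argument --- and that density is neither proved in the paper (Remark~\ref{rem:RaOpvNeu} only asserts that $\Cfin$ is a $C^\ast$-subalgebra) nor addressed in your proposal. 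Consequently your argument proves the trace identity only on a subalgebra, whereas the proposition asserts it on all of $\Nn^+(\Omega,\mu,G,\win)$. The paper sidesteps this entirely by working at the von Neumann algebra level from the start: it shows every element of the algebra is a Carleman operator, that Carleman operators form a right ideal, that the identity kernel $I(x,D,y)=\psi(x^{-1}y)$ is Carleman, and then invokes \cite[Theorem~4.2]{LePeVe07} to obtain the faithful normal finite trace directly. To repair your proof you would either have to establish the $\sigma$-weak density of $\Cfin$ (plausible but nontrivial), or abandon the kernel-level computation for arbitrary elements of $\Nn$ and follow the Carleman-operator route.
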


With this at hand, the abstract density of states of a self-adjoint element of the von Neumann algebra can be defined.

\begin{definition}
Let $A\in\Nn(\Omega,G\times H,\mu,\win)$ be self-adjoint. Then the measure $\eta_{\Omega,\win,\mu}^A$ defined by $\eta_{\Omega,\win,\mu}^A(B):=\tau_{\Omega,\win}(\chi_B(A))$ for a Borel measurable set $B\subseteq\RR$ is called {\em abstract density of states}.
\end{definition}

The authors \cite{LeSt03-Delone} associated a von Neumann algebra with Delone dynamical systems on $\RR^d$ by defining a suitable groupoid $\gs$ and a $\gs$-space $(\Xx,\pi,J)$, c.f.\@ \cite[Definition~2.5]{LePeVe07}. This approach extends naturally to Delone dynamical systems of a unimodular lcsc group. For convenience of the reader, a guideline and the main concepts are described here. The proofs follow the same lines without any difficulty.

\medskip

A set $\gs$ equipped with a composition $\gs^{(2)}\subseteq\gs\times\gs\to\gs, (\gamma,\rho)\mapsto \gamma\rho,$ and  an inverse $\gs\to\gs, \gamma\to\gamma^{-1},$ is called {\em groupoid} if the composition is associative, $(\gamma,\gamma^{-1}), (\gamma^{-1},\gamma)\in\gs^{(2)}$ for all $\gamma\in\gs$ and for every $(\gamma,\rho)\in\gs^{(2)}$, we have $\gamma\rho\rho^{-1}=\gamma$ and $\gamma^{-1}\gamma\rho=\rho$. It is worth pointing out that the composition is only partially defined. Naturally associated with a groupoid is its {\em range map} $r:\gs\to\gs, r(\gamma):=\gamma\gamma^{-1},$ and {\em source map} $s:\gs\to\gs, r(\gamma):=\gamma^{-1}\gamma,$ as well as the {\em unit space} $\gs^{(0)}=r(\gs)=s(\gs)$. The set $\gs^x:=r^{-1}(x)$ is called {\em $r$-fiber} of $x\in\gs^{(0)}$. 

\medskip

The groupoids considered here are endowed with a locally compact, second countable, Hausdorff topology such that the corresponding composition and inverse (and hence, the range and source map) are continuous. Specifically, if the groupoid is endowed with the corresponding Borel-$\sigma$ algebra then all related maps are measurable. Furthermore, the $r$-fiber $\gs^x$ is a measurable subset of $\gs$.

\medskip

Define the topological groupoid $\gs:=\Omega\times G$ with composition $(D,g)\circ(g^{-1}D,h) := (D,gh)$ and inverse $(D,g)^{-1}:= (g^{-1}D,g^{-1})$. The range map is given by $r(D,g)=(D,e_G)$ and the source map by $s(D,g)=(g^{-1}D,e_G)$ and hence, the unit space $\gs^{(0)}$ is homeomorphic to $\Omega$. Define
$$
\Xx	:= \big\{ 
		(D,x)\in \Omega\times G\times H
		\,\big|\,
		x\in D,\,
		\win(\pi_H(x))\neq 0
	\big\}
	\subseteq
	\Omega\times G\times H
$$
equipped with the induced topology. Note that $\Xx$ is not closed in general due to the constraint that $\win(\pi_H(x))\neq 0$. Define the continuous projection $\pi:\Xx\to\Omega,\, \pi(D,x):=D,$ and the corresponding fibers $\Xx^D:=\pi^{-1}(D)\simeq \{x\in D\,|\,\win(\pi_H(x))\neq 0\}$. Furthermore, $\gs$ acts on $\Xx$ via $J$ defined by $J(D,g):\Xx^{s(D,g)}\to\Xx^{r(D,g)},\, (g^{-1}D,y)\mapsto (D,gy),$ satisfying $J(\gamma\eta)=J(\gamma)J(\eta)$ and $J(\gamma^{-1})=J(\gamma)^{-1}$. Hence, the triple $(\Xx,\pi,J)$ is a $\gs$-space according to \cite[Definition~2.5]{LePeVe07}. Denote by $\Ff(\Xx)$ the set of all measurable functions on $\Xx$ and $\Ff^+(\Xx)$ the subset of nonnegative measurable functions.
The map $\alpha:\Omega\to\Rr(\Xx),\, \alpha^D(f):= \sum_{x\in D} f(D,x)\, \win(\pi_H(x)),$ defines a random variable (in the sense of Connes) with values in $\Xx$, c.f. \cite[Definition~2.5]{LePeVe07}. The proof follows the same lines as \cite[Corollary~2.6]{LeSt03-Delone} by additionally using the $G$-invariance of $\win\circ\pi_H$ and that $(D,x)\in\Xx$ implies $\win(\pi_H(x))\neq 0$. 

\medskip

Furthermore, $\nu:\Omega\to\Rr(\gs)$ defined by $D\mapsto\nu^D(f):=\int_G f(D,h) \, dm_L(h)$ is a transverse function (left-continuous Haar system) of $\gs$ in terms of \cite[Definition~2.3]{LePeVe07}. In order to apply the theory developed in \cite{LePeVe07}, the groupoid structure with the corresponding measures needs to be an {\em admissible setting}, c.f. \cite[Definition~2.4]{LePeVe07}. Thus, the measure $\mu$ on $\Omega$ has to be $\nu$-invariant where $\nu$ is the transverse function. A short computation implies that this only holds if $G$ is unimodular, namely the Haar measure is left and right invariant. 

\medskip

Moreover, the existence of a strictly positive function $u_0\in\Ff^+(\Xx)$ needs to be guaranteed such that $\nu\ast u_0 (p) = 1$ for all $p\in\Xx$, c.f. \cite[Definition~2.6]{LePeVe07}. Such a function is defined by $u_0(D,x):= f(\pi_G(x))$, where $f:G\to(0,\infty)$ satisfies $\int_G f dm= 1$. Then the identity $\nu\ast u_0 (p) = 1$ follows for all $p\in\Xx$ by a short computation invoking that $G$ is unimodular.

\medskip

The following lemma assures that the trace is independent of the choice of the function $\rho$ in Proposition~\ref{prop:GenTrace}.

\begin{lemma}
\label{lem:RanVarInd}
Let $\beta:\Omega\to\Rr(\Xx)$ be random variable (in the sense of Connes) with values in $\Xx$. For every $F\in\Ff^+(\Xx)$, the integral $\int_\Omega \beta^D(F)\, d\mu(D)$ is independent of $F$ if,  for all $(D,x)\in\Xx$, we have $\int_G F(g^{-1}D,g^{-1}x)dm_L(g)=1$.
\end{lemma}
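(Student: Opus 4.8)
The plan is to leverage two structural facts already in force here: the $J$-equivariance of the random variable $\beta$, and the $\nu$-invariance of $\mu$ (which is exactly the place where unimodularity of $G$ is used, as observed above). Recall that $\beta$ being a random variable in the sense of Connes with values in the $\gs$-space $(\Xx,\pi,J)$ means $J(D,g)_\ast\,\beta^{g^{-1}D}=\beta^D$ for every $\gamma=(D,g)\in\gs$; concretely, for nonnegative measurable $\varphi$ on the fiber $\Xx^D$ one has $\beta^D(\varphi)=\int_{\Xx^{g^{-1}D}}\varphi(D,gy)\,d\beta^{g^{-1}D}(y)$. The $\nu$-invariance of $\mu$ states $\int_\Omega\nu^D(\psi)\,d\mu(D)=\int_\Omega\nu^D(\check\psi)\,d\mu(D)$ for nonnegative measurable $\psi$ on $\gs$, where $\check\psi(\gamma):=\psi(\gamma^{-1})$; for $\gs=\Omega\times G$ this reads $\int_\Omega\int_G\psi(D,g)\,dm_L(g)\,d\mu(D)=\int_\Omega\int_G\psi(g^{-1}D,g^{-1})\,dm_L(g)\,d\mu(D)$.

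I would then fix two functions $F,F'\in\Ff^+(\Xx)$, each satisfying $\int_G F(g^{-1}D,g^{-1}x)\,dm_L(g)=1$ for all $(D,x)\in\Xx$, and aim to show $\int_\Omega\beta^D(F)\,d\mu(D)=\int_\Omega\beta^D(F')\,d\mu(D)$. Here I note that $(g^{-1}D,g^{-1}x)\in\Xx$ whenever $(D,x)\in\Xx$, since $g^{-1}x\in g^{-1}D$ and $\win\circ\pi_H$ is $G$-invariant, so $\win(\pi_H(g^{-1}x))=\win(\pi_H(x))\neq 0$. The device is the single nonnegative quantity
\[
I := \int_\Omega\int_G\int_{\Xx^D} F(D,x)\,F'(g^{-1}D,g^{-1}x)\,d\beta^D(x)\,dm_L(g)\,d\mu(D),
\]
which I shall evaluate in two different ways. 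Carrying out the $g$-integration first (Tonelli permits interchanging $\int_G$ with $\int_{\Xx^D}$ since all integrands are nonnegative) and invoking the normalization of $F'$ collapses the inner $g$-integral to $1$, yielding $I=\int_\Omega\beta^D(F)\,d\mu(D)$.

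The core of the argument is to re-evaluate $I$ the other way around. I would set $\psi(D,g):=\int_{\Xx^D}F(D,x)\,F'(g^{-1}D,g^{-1}x)\,d\beta^D(x)$, so that $I=\int_\Omega\nu^D(\psi)\,d\mu(D)$, and apply the $\nu$-invariance of $\mu$ to replace $\psi$ by $\check\psi$. Unwinding gives $\psi(g^{-1}D,g^{-1})=\int_{\Xx^{g^{-1}D}}F(g^{-1}D,y)\,F'(D,gy)\,d\beta^{g^{-1}D}(y)$, and applying the $J$-equivariance of $\beta$ with $\varphi(D,x):=F(g^{-1}D,g^{-1}x)\,F'(D,x)$ rewrites the inner integral as $\int_{\Xx^D}F(g^{-1}D,g^{-1}x)\,F'(D,x)\,d\beta^D(x)$. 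Hence
\[
I = \int_\Omega\int_G\int_{\Xx^D} F(g^{-1}D,g^{-1}x)\,F'(D,x)\,d\beta^D(x)\,dm_L(g)\,d\mu(D),
\]
and performing the $g$-integration first while using the normalization of $F$ collapses it to $1$, giving $I=\int_\Omega\beta^D(F')\,d\mu(D)$. Comparing the two evaluations of $I$ delivers the asserted independence.

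I expect the only genuine obstacle to be the bookkeeping in the reversal step: one must track the fibers $\Xx^D$ versus $\Xx^{g^{-1}D}$, apply the pushforward identity $J(D,g)_\ast\beta^{g^{-1}D}=\beta^D$ in the correct direction, and use the substitution $g\mapsto g^{-1}$ with $dm_L(g^{-1})=dm_L(g)$, which is precisely where unimodularity of $G$ is indispensable. Everything else reduces to Tonelli-type interchanges justified by nonnegativity of $F$, $F'$ and the fibered measures $\beta^D$.
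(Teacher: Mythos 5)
Your argument is correct, but it takes a genuinely different route from the paper. The paper's proof is essentially a citation: it identifies $\nu^D(f)=\int_G f(D,h)\,dm_L(h)$ as a transverse function of $\gs$, invokes \cite[Lemma~2.9~(b)]{LePeVe07} (which asserts the independence of $\int_\Omega\beta^D(F)\,d\mu(D)$ for any nonnegative measurable $F$ with $\nu\ast F=1$), and then merely computes $(\nu\ast F)(D,x)=\int_G F(g^{-1}D,g^{-1}x)\,dm_L(g)$ to match the stated hypothesis. What you have written is, in effect, a self-contained proof of that cited abstract lemma in the concrete groupoid $\gs=\Omega\times G$: the classical two-fold evaluation of the auxiliary integral $I$, using Tonelli, the pushforward identity $J(D,g)_\ast\beta^{g^{-1}D}=\beta^D$, and the $\nu$-invariance of $\mu$ (i.e.\@ unimodularity) in the reversal step. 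Your bookkeeping there is right: $\check\psi(D,g)=\psi(g^{-1}D,g^{-1})$, the equivariance applied to $\varphi(D,x)=F(g^{-1}D,g^{-1}x)F'(D,x)$ lands back on the fiber $\Xx^D$, and the observation that $(g^{-1}D,g^{-1}x)\in\Xx$ via the $G$-invariance of $\win\circ\pi_H$ is exactly the point that makes the normalization hypothesis usable. The trade-off: the paper's route is shorter and delegates the measurability of $\psi$ and the Fubini interchanges to the reference, while your direct argument makes transparent precisely where unimodularity and the equivariance of the random variable enter, and works verbatim even when the common value of the integral is $+\infty$.
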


\begin{proof}
The unit space of $\gs$ is $\gs^{(0)}\simeq\Omega$. The map $\nu:\Omega\to\Rr(\gs)$ defined by $D\mapsto\nu^D(f):=\int_G f(D,h) \, dm_L(h)$ is a transverse function of $\gs$. Thus, \cite[Lemma~2.9~(b)]{LePeVe07} yields that $\int_\Omega \beta^D(F)\, d\mu(D)$ is independent of $F$, if $F$ is a measurable, non-negative function and $\nu\ast F = 1$ on $\Xx$, where
$$
(\nu\ast F )(D,x)
	= \int_{\gs^{\pi(D,x)}} 
		F\big(J\big(g^{-1}D,g^{-1}\big)(D,x)\big)
		\, d\nu^{\pi(D,x)}(D,g)
	= \int_G F(g^{-1}D,g^{-1}x) \, dm_L(g)
$$
for $(D,x)\in\Xx$. 
\end{proof}

\medskip

Next, we show that $L^2(\Xx,\mu\circ\alpha)$ fibers naturally over $\Omega$ as an direct integral invoking the $G$-invariance of the window function. Specifically, consider the vector space $S\subseteq\prod_{D\in\Omega}\ell^2(D,\win)$ of all measurable functions $u:\Xx\to\CC$ satisfying $u(D,\cdot)\in\ell^2(D,\win)$. With this measurable structure $S$ at hand, the family $\big(\ell^2(D,\win)\big)_{D\in\Omega}$ defines a measurable field of Hilbert spaces in terms of \cite[Part~II, Chapter~I.3, Definition~1]{Dixmier81}. 

\begin{proposition}
\label{prop:DirInt}
Let $\mu$ be a $G$-invariant probability measure on $\Omega$. Then the map 
$$
U:L^2(\Xx,\mu\circ\alpha)\to \int_\Omega^\oplus \ell^2(D,\win)\, d\mu(D)
	,\quad
	U(f)(D)(x):= f((D,x)),
$$
is unitary.
\end{proposition}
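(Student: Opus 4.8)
The plan is to show directly that $U$ is a well-defined isometry and then that it is onto, which together give unitarity. The crucial observation is that both Hilbert spaces carry the same underlying norm once the measure $\mu\circ\alpha$ and the direct integral are unwound, so the content lies in the measurability bookkeeping rather than in any estimate.

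First I would record the two norms. On the right-hand side, the direct integral norm of a measurable field $u$ is $\|u\|^2 = \int_\Omega \|u(D)\|^2_{\ell^2(D,\win)}\,d\mu(D)$, and by definition of the weighted inner product $\|u(D)\|^2_{\ell^2(D,\win)} = \sum_{x\in D} |u(D)(x)|^2\,\win(\pi_H(x)) = \alpha^D\big(|u(D,\cdot)|^2\big)$. On the left-hand side, the measure $\mu\circ\alpha$ is by construction the integral of the fibre measures $\alpha^D$ against $\mu$, so $\|f\|_{L^2(\Xx,\mu\circ\alpha)}^2 = \int_\Xx |f|^2\,d(\mu\circ\alpha) = \int_\Omega \alpha^D\big(|f(D,\cdot)|^2\big)\,d\mu(D)$. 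Comparing the two expressions shows that for $f\in L^2(\Xx,\mu\circ\alpha)$ one has $\|U(f)\|^2 = \|f\|^2_{L^2(\Xx,\mu\circ\alpha)}$ as a formal identity of (a priori possibly infinite) integrals.

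Next I would make $U$ well-defined. Since the measurable structure $S$ on $\big(\ell^2(D,\win)\big)_{D\in\Omega}$ consists precisely of the measurable functions $u:\Xx\to\CC$ with $u(D,\cdot)\in\ell^2(D,\win)$, a measurable function $f$ on $\Xx$ is the same datum as the measurable field $D\mapsto f(D,\cdot)$; thus $U(f)$ lies in $S$ automatically. Finiteness of $\|f\|_{L^2(\Xx,\mu\circ\alpha)}$ forces $\|f(D,\cdot)\|_{\ell^2(D,\win)}<\infty$ for $\mu$-a.e.\@ $D$ and makes the field square-integrable, so $U(f)$ represents an element of $\int_\Omega^\oplus \ell^2(D,\win)\,d\mu(D)$. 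One also checks that $f=0$ in $L^2(\Xx,\mu\circ\alpha)$, i.e.\@ $\alpha^D\big(|f(D,\cdot)|^2\big)=0$ for $\mu$-a.e.\@ $D$, corresponds exactly to $f(D,\cdot)=0$ in $\ell^2(D,\win)$ for $\mu$-a.e.\@ $D$; hence $U$ descends to equivalence classes and, by the norm identity, is an isometry, in particular injective.

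Finally, surjectivity is immediate from the same identification: any element of the direct integral is represented by a field in $S$, i.e.\@ by a measurable function $g$ on $\Xx$ with $g(D,\cdot)\in\ell^2(D,\win)$ and finite direct integral norm; by the norm identity $g\in L^2(\Xx,\mu\circ\alpha)$ and $U(g)$ is the given element. The only genuine obstacle is the measure-theoretic matching, namely verifying that the measurable structure $S$ really coincides with measurability on $\Xx$ and that $\mu\circ\alpha$-null sets agree with the direct-integral null sets; this is where the definition of $\alpha$ as a Connes random variable and the construction of $\mu\circ\alpha$ enter. Once this is settled, the unitarity of $U$ is a direct consequence of the norm computation above.
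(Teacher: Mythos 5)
Your proposal is correct, but it takes a different route from the paper. The paper's proof is essentially a citation: it invokes \cite[Lemma~2.9]{LeSt03-Algebras} for the unitary equivalence $L^2(\Xx,\mu\circ\alpha)\cong\int_\Omega^\oplus\ell^2(\Xx^D,\alpha^D)\,d\mu(D)$ and then only has to observe that $\ell^2(\Xx^D,\alpha^D)\cong\ell^2(\tilde D,\win)$, which is $\ell^2(D,\win)$ by the notational convention of Subsection~\ref{ssec:RandOp}. You instead give a direct, self-contained verification: the norm identity $\|U(f)\|^2=\int_\Omega\alpha^D\bigl(|f(D,\cdot)|^2\bigr)\,d\mu(D)=\|f\|^2_{L^2(\Xx,\mu\circ\alpha)}$ coming from the disintegration of $\mu\circ\alpha$ over the fibre measures $\alpha^D$, plus surjectivity from the fact that the measurable structure $S$ is by definition built from measurable functions on $\Xx$. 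In effect you are reproving the cited lemma in this special case, which buys self-containedness; the price is that the burden of the measure-theoretic bookkeeping falls on you, namely that $S$ genuinely satisfies the axioms of a measurable field in the sense of Dixmier, that every square-integrable measurable vector field is $\mu$-a.e.\@ represented by a globally measurable function on $\Xx$, and that $\mu\circ\alpha$-null sets match direct-integral null sets. You correctly flag this as the only genuine obstacle but do not carry it out; the paper is at the same level of rigor, since it delegates exactly this point to the reference. Two small things worth making explicit if you flesh this out: the fibre $\Xx^D$ omits the points $x\in D$ with $\win(\pi_H(x))=0$, so the identification of the sum over $D$ with the norm in $\ell^2(D,\win)=\ell^2(\tilde D,\win)$ uses that such points carry zero weight; and note that the $G$-invariance of $\win\circ\pi_H$, emphasized in the surrounding text, is what makes the field $D\mapsto\ell^2(D,\win)$ compatible with the dynamics, though it is not strictly needed for the unitarity statement itself.
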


\begin{proof}
According to \cite[Lemma~2.9]{LeSt03-Algebras}, the Hilbert spaces $\int_\Omega^\oplus \ell^2(\Xx^D,\alpha^D)\, d\mu(D)$ and $L^2(\Xx,\mu\circ\alpha)$ are unitarily equivalent. Since $\Xx^D\simeq \{x\in D\,|\,\win_D(x)\neq 0\}=:\tilde{D}$, we derive $\ell^2(\tilde{D},\win)\simeq\ell^2(\Xx^D,\alpha^D)$ by the choice of the random variable $\alpha$. Recall that we chose the convention to denote $\ell^2(\tilde{D},\win)$ by $\ell^2(D,\win)$ in Section~\ref{ssec:RandOp}.
\end{proof}

\medskip

A family of operators $A:=(A_D)_{D\in\Omega}$ is called {\em measurable} if $D\mapsto\langle u_D, A_D v_D\rangle_{D,\win}$ is measurable for every $u,v\in S$. Furthermore, $A$ is {\em bounded} if $\sup_{D\in\Omega}\|A_D\|<\infty$ and $A$ is {\em equivariant} if $A_{gD}=T_gA_DT_g^*$ where $T_g:\ell^2(D,\win)\to\ell^2(gD,\win),$ $T_g u := u(g^{-1}\cdot)$. A measurable, bounded, equivariant family of operators $A:=(A_D)_{D\in\Omega}$ is called {\em random bounded operator}.

\begin{definition}
\label{def:vNalg}
Let $G$ and $H$ be lcsc groups where $G$ is unimodular, $\win\in\Cc_c(H)$ be a continuous window function and $\mu$ be a $(G\times H)$-invariant probability measure on $\Omega\in\IDelGH$. Then the associated von-Neumann algebra is defined by
$$
\Nn(\Omega,G\times H,\mu,\win) \; := \;
	\{
		A=(A_D)_{D\in\Omega}
		\;|\;
		A \text{ measurable, bounded, equivariant} 
	\}/_\sim,
$$
where two random bounded operators $A$ and $B$ are equivalent if $A_D=B_D$ $\mu$-almost everywhere.
\end{definition}

\medskip

{\bf Proof of Proposition~\ref{prop:GenTrace}.}
For every random bounded operator $A$, $\beta_A:\Omega\to \Rr(\Xx), \beta_A^D(f):= tr(A_D M_f(D))$ is a random variable, where $M_f$ is the multiplication operator by $f$. Thus, $\tau$ is independent of $\rho$ by Lemma~\ref{lem:RanVarInd}. That $\tau$ is a faithful, semifinite weight follows by \cite{LePeVe07}. The finiteness of $\tau$ follows as $\mu$ is a probability measure. It is left to show that $\tau$ is a trace: the main strategy is to show that every element in the von Neumann algebra is a Carleman operator, see e.g. \cite{LePeVe07} and \cite[Proposition~3.6]{LePoSc16}. According to \cite[Proposition~4]{LePeVe07}, the Carleman operators form a right ideal in $\Nn(\Omega,G\times H,\mu,\win)$. Using Urysohn's Lemma there is a $\psi\in\Cc_c(G\times H)$ such that $\supp(\psi)\subseteq U$ and $\psi(e)=1$, where $U\subseteq G$ open is the parameter of the Delone sets $\Delr$. The identity operator $I(x,D,y):=\psi(x^{-1}y)\in\Kfin$ is a Carleman operator. Thus, Proposition~\ref{prop:GenTrace} is a consequence of \cite[Theorem~4.2]{LePeVe07}.
\hfill$\Box$

\bibliographystyle{amsalphasort}
\bibliography{references}

\def\cprime{$'$} \def\cprime{$'$} \def\cprime{$'$}
\providecommand{\bysame}{\leavevmode\hbox to3em{\hrulefill}\thinspace}
\providecommand{\MR}{\relax\ifhmode\unskip\space\fi MR }
\providecommand{\MRhref}[2]{%
  \href{http://www.ams.org/mathscinet-getitem?mr=#1}{#2}
}
\providecommand{\href}[2]{#2}
\begin{thebibliography}{BdlHK09}

\bibitem[AJ09]{AvJi09}
A.~Avila and S.~Jitomirskaya, \emph{The {T}en {M}artini {P}roblem}, Ann. of
  Math. (2) \textbf{170} (2009), no.~1, 303--342. \MR{2521117}

\bibitem[Baa02]{Baa01}
M.~Baake, \emph{Diffraction of weighted lattice subsets}, Canad. Math. Bull.
  \textbf{45} (2002), no.~4, 483--498, Dedicated to Robert V. Moody.
  \MR{1941223}

\bibitem[BG13]{BaakeGrimm13}
M.~Baake and U.~Grimm, \emph{Aperiodic order. {V}ol. 1}, Encyclopedia of
  Mathematics and its Applications, vol. 149, Cambridge University Press,
  Cambridge, 2013, A mathematical invitation, With a foreword by Roger Penrose.
  \MR{3136260}

\bibitem[BHS17]{BaHuSt17}
M.~Baake, C.~Huck, and N.~Strungaru, \emph{On weak model sets of extremal
  density}, Indag. Math. (N.S.) \textbf{28} (2017), no.~1, 3--31. \MR{3597030}

\bibitem[BL04]{BaLe04}
M.~Baake and D.~Lenz, \emph{Dynamical systems on translation bounded measures:
  pure point dynamical and diffraction spectra}, Ergodic Theory Dynam. Systems
  \textbf{24} (2004), no.~6, 1867--1893. \MR{2106769}

\bibitem[BL05]{BaLe05}
\bysame, \emph{Deformation of {D}elone dynamical systems and pure point
  diffraction}, J. Fourier Anal. Appl. \textbf{11} (2005), no.~2, 125--150.
  \MR{2131633}

\bibitem[BM04]{BaMo04}
M.~Baake and R.~V. Moody, \emph{{W}eighted {D}irac combs with pure point
  diffraction}, J. Reine Angew. Math. \textbf{573} (2004), 61--94. \MR{2084582}

\bibitem[Bec16]{BeckusThesis16}
S.~Beckus, \emph{{S}pectral approximation of aperiodic {S}chr\"odinger
  operators}, Ph.D. thesis, Friedrich-Schiller-University Jena, Germany, 2016,
  https://arxiv.org/abs/1610.05894, p.~244.

\bibitem[BB16]{BeBe16}
S.~Beckus and J.~Bellissard, \emph{{C}ontinuity of the spectrum of a field of
  self-ad\-joint operators}, Annales Henri Poincar{\'e} \textbf{17} (2016),
  no.~12, 3425--3442.

\bibitem[BBC17]{BeBeCo17}
S.~Beckus, J.~Bellissard, and H.~Cornean, \emph{{S}pectral stability of
  {S}chr\"odinger operators in the {H}ausdorff metric}, 2017, Preprint.

\bibitem[BBdN17]{BeBeNi17}
S.~Beckus, J.~Bellissard, and G.~de~Nittis, \emph{{S}pectral {C}ontinuity for
  {A}periodic {Q}uantum {S}ystems {I}. {G}eneral {T}heory}, 2017,
  arxiv.org/abs/1709.00975.

\bibitem[Bee93]{Bee93}
G.~Beer, \emph{On the {F}ell topology}, Set-Valued Anal. \textbf{1} (1993),
  no.~1, 69--80. \MR{1230370 (95a:54024)}

\bibitem[Bel86]{Bel86}
J.~Bellissard, \emph{{$K$}-theory of {$C^\ast$}-algebras in solid state
  physics}, Statistical mechanics and field theory: mathematical aspects
  ({G}roningen, 1985), Lecture Notes in Phys., vol. 257, Springer, Berlin,
  1986, pp.~99--156. \MR{862832}

\bibitem[Bel90]{Bel90}
\bysame, \emph{Spectral properties of {S}chr\"odinger's operator with a
  {T}hue-{M}orse potential}, Number theory and physics ({L}es {H}ouches, 1989),
  Springer Proc. Phys., vol.~47, Springer, Berlin, 1990, pp.~140--150.
  \MR{1058456 (92b:82007)}

\bibitem[BBG91]{BeBoGh91}
J.~Bellissard, A.~Bovier, and J.~M. Ghez, \emph{Spectral properties of a tight
  binding {H}amiltonian with period doubling potential}, Comm. Math. Phys.
  \textbf{135} (1991), no.~2, 379--399. \MR{1087389 (91m:81042)}

\bibitem[BHZ00]{BeHeZa00}
J.~Bellissard, D.~J.~L. Herrmann, and M.~Zarrouati, \emph{Hulls of aperiodic
  solids and gap labeling theorems}, Directions in mathematical quasicrystals,
  CRM Monogr. Ser., vol.~13, Amer. Math. Soc., Providence, RI, 2000,
  pp.~207--258. \MR{1798994}

\bibitem[BIST89]{BIST89}
J.~Bellissard, B.~Iochum, E.~Scoppola, and D.~Testard, \emph{Spectral
  properties of one-dimensional quasi-crystals}, Comm. Math. Phys. \textbf{125}
  (1989), no.~3, 527--543. \MR{1022526 (90m:82043)}

\bibitem[BIT91]{BIT91}
J.~Bellissard, B.~Iochum, and D.~Testard, \emph{Continuity properties of the
  electronic spectrum of {$1$}{D} quasicrystals}, Comm. Math. Phys.
  \textbf{141} (1991), no.~2, 353--380. \MR{1133271 (92j:81037)}

\bibitem[BH16]{BjHa16}
M.~Bj{\"o}rklund and T.~Hartnick, \emph{{A}pproximate lattices}, 2016,
  arXiv:1612.09246.

\bibitem[BHP16]{BjHaPo16}
M.~Bj{\"o}rklund, T.~Hartnick, and F.~Pogorzelski, \emph{{A}periodic order and
  spherical diffraction {I}: {A}uto-correlation of model sets}, 2016, to appear
  in {P}roc. {L}ond. {M}ath. {S}oc., arXiv:1602.08928.

\bibitem[BHP17]{BjHaPo17}
\bysame, \emph{{A}periodic order and spherical diffraction {I}{I}: {T}he shadow
  transform and the diffraction formula}, 2017, arXiv:1704.00302.

\bibitem[Bou63]{Bou63}
N.~Bourbaki, \emph{\'{E}l\'ements de math\'ematique. {F}ascicule {XXIX}.
  {L}ivre {VI}: {I}nt\'egration. {C}hapitre 7: {M}esure de {H}aar. {C}hapitre
  8: {C}onvolution et repr\'esentations}, Actualit\'es Scientifiques et
  Industrielles, No. 1306, Hermann, Paris, 1963. \MR{0179291}

\bibitem[BdlHK09]{BrHaKl09}
M.~R. Bridson, P.~de~la Harpe, and V.~Kleptsyn, \emph{The {C}habauty space of
  closed subgroups of the three-dimensional {H}eisenberg group}, Pacific J.
  Math. \textbf{240} (2009), no.~1, 1--48. \MR{2485473}

\bibitem[Cha50]{Cha50}
C.~Chabauty, \emph{Limite d'ensembles et g\'eom\'etrie des nombres}, Bull. Soc.
  Math. France \textbf{78} (1950), 143--151. \MR{0038983 (12,479f)}

\bibitem[Con79]{Con78}
A.~Connes, \emph{Sur la th\'{e}orie non commutative de l'int\'{e}gration},
  Alg\`{e}bres d'op\'{e}rateurs ({S}\'{e}m., {L}es {P}lans-sur-{B}ex, 1978),
  Lecture Notes in Math., vol. 725, Springer, Berlin, 1979, pp.~19--143.
  \MR{548112 (81g:46090)}

\bibitem[Dam98]{Dam98}
D.~Damanik, \emph{Singular continuous spectrum for a class of substitution
  {H}amiltonians}, Lett. Math. Phys. \textbf{46} (1998), no.~4, 303--311.
  \MR{1668569 (99m:81053)}

\bibitem[DG16]{DaGo16}
D.~Damanik and A.~Gorodetski, \emph{Sums of regular cantor sets of large
  dimension and the square fibonacci hamiltonian}, 2016, arXiv:1601.01639.

\bibitem[DGS15]{DaGoSo13}
D.~Damanik, A.~Gorodetski, and B.~Solomyak, \emph{Absolutely continuous
  convolutions of singular measures and an application to the square
  {F}ibonacci {H}amiltonian}, Duke Math. J. \textbf{164} (2015), no.~8,
  1603--1640. \MR{3352042}

\bibitem[DGY16]{DaGoYe16}
D.~Damanik, A.~Gorodetski, and W.~Yessen, \emph{The {F}ibonacci {H}amiltonian},
  Invent. Math. \textbf{206} (2016), no.~3, 629--692. \MR{3573970}

\bibitem[DL99a]{DaLe99I}
D.~Damanik and D.~Lenz, \emph{Uniform spectral properties of one-dimensional
  quasicrystals. {I}. {A}bsence of eigenvalues}, Comm. Math. Phys. \textbf{207}
  (1999), no.~3, 687--696. \MR{1727235 (2000k:81078)}

\bibitem[DL99b]{DaLe99II}
\bysame, \emph{Uniform spectral properties of one-dimensional quasicrystals.
  {II}. {T}he {L}yapunov exponent}, Lett. Math. Phys. \textbf{50} (1999),
  no.~4, 245--257. \MR{1768702 (2001g:81060)}

\bibitem[DE09]{DE09}
A.~Deitmar and S.~Echterhoff, \emph{Principles of harmonic analysis},
  Universitext, Springer, New York, 2009. \MR{2457798}

\bibitem[Dix81]{Dixmier81}
J.~Dixmier, \emph{von {N}eumann algebras}, North--Holland Mathematical Library,
  vol.~27, North--Holland Publishing Co., Amsterdam-New York, 1981, With a
  preface by E. C. Lance, Translated from the second French edition by F.
  Jellett. \MR{641217 (83a:46004)}

\bibitem[Dwo93]{Dw93}
S.~Dworkin, \emph{Spectral theory and x-ray diffraction}, J. Math. Phys.
  \textbf{34} (1993), no.~7, 2965--2967. \MR{1224190}

\bibitem[Ele08]{Ele08}
G.~Elek, \emph{{$L^2$}-spectral invariants and convergent sequences of finite
  graphs}, J. Funct. Anal. \textbf{254} (2008), no.~10, 2667--2689.

\bibitem[Fel62]{Fel62}
J.~M.~G. Fell, \emph{A {H}ausdorff topology for the closed subsets of a locally
  compact non-{H}ausdorff space}, Proc. Amer. Math. Soc. \textbf{13} (1962),
  472--476. \MR{0139135 (25 \#2573)}

\bibitem[Gel15]{Gel15}
T.~Gelander, \emph{A lecture on {I}nvariant {R}andom {S}ubgroups}, 2015,
  arXiv:1503.08402.

\bibitem[Gou05]{Gou05}
J.-B. Gou\'er\'e, \emph{Quasicrystals and almost periodicity}, Comm. Math.
  Phys. \textbf{255} (2005), no.~3, 655--681. \MR{2135448}

\bibitem[Hau14]{Hausdorff14}
F.~Hausdorff, \emph{Grundz\"uge der {M}engenlehre}, Reprinted 1949 by Chelsea
  Publishing Company, New York, N. Y., 1914. \MR{0031025}

\bibitem[Hof95]{Hof95}
A.~Hof, \emph{On diffraction by aperiodic structures}, Comm. Math. Phys.
  \textbf{169} (1995), no.~1, 25--43. \MR{1328260}

\bibitem[HR15]{HuRi15}
C.~Huck and C.~Richard, \emph{On pattern entropy of weak model sets}, Discrete
  Comput. Geom. \textbf{54} (2015), no.~3, 741--757. \MR{3392977}

\bibitem[Kel03]{Kel03}
J.~Kellendonk, \emph{Pattern-equivariant functions and cohomology}, J. Phys. A
  \textbf{36} (2003), no.~21, 5765--5772. \MR{1985494 (2004e:52025)}

\bibitem[Kel08]{Kel08}
\bysame, \emph{Pattern equivariant functions, deformations and equivalence of
  tiling spaces}, Ergodic Theory Dynam. Systems \textbf{28} (2008), no.~4,
  1153--1176. \MR{2437225}

\bibitem[KL13]{KeLe13}
J.~Kellendonk and D.~Lenz, \emph{Equicontinuous {D}elone dynamical systems},
  Canad. J. Math. \textbf{65} (2013), no.~1, 149--170. \MR{3004461}

\bibitem[KP00]{KePu00}
J.~Kellendonk and I.~F. Putnam, \emph{Tilings, {${C}^{\ast}$}-algebras, and
  {$K$}-theory}, Directions in mathematical quasicrystals, CRM Monogr. Ser.,
  vol.~13, Amer. Math. Soc., Providence, RI, 2000, pp.~177--206. \MR{1798993
  (2001m:46153)}

\bibitem[KR16]{KeRi16}
G.~Keller and C.~Richard, \emph{Dynamics on the graph of the torus
  parametrization}, Ergodic Theory and Dynamical Systems (2016), 1–38.

\bibitem[KN84]{KrNe84}
P.~Kramer and R.~Neri, \emph{On periodic and nonperiodic space fillings of
  {${\bf E}^m$} obtained by projection}, Acta Cryst. Sect. A \textbf{40}
  (1984), no.~5, 580--587. \MR{768042}

\bibitem[LP03]{LaPl03}
J.~C. Lagarias and P.~A.~B. Pleasants, \emph{Repetitive {D}elone sets and
  quasicrystals}, Ergodic Theory Dynam. Systems \textbf{23} (2003), no.~3,
  831--867. \MR{1992666}

\bibitem[LMS02]{LeMoSo02}
J.-Y. Lee, R.~V. Moody, and B.~Solomyak, \emph{Pure point dynamical and
  diffraction spectra}, Ann. Henri Poincar\'e \textbf{3} (2002), no.~5,
  1003--1018. \MR{1937612}

\bibitem[Len02]{Len02}
D.~Lenz, \emph{Singular spectrum of {L}ebesgue measure zero for one-dimensional
  quasicrystals}, Comm. Math. Phys. \textbf{227} (2002), no.~1, 119--130.
  \MR{1903841 (2003k:47043)}

\bibitem[LM09]{LeMo09}
D.~Lenz and R.~V. Moody, \emph{Extinctions and correlations for uniformly
  discrete point processes with pure point dynamical spectra}, Comm. Math.
  Phys. \textbf{289} (2009), no.~3, 907--923. \MR{2511655}

\bibitem[LMV08]{LMV08}
D.~Lenz, P.~M{\"u}ller, and I.~Veseli{\'c}, \emph{Uniform existence of the
  integrated density of states for models on {$\Bbb Z^d$}}, Positivity
  \textbf{12} (2008), no.~4, 571--589.

\bibitem[LPV07]{LePeVe07}
D.~Lenz, N.~Peyerimhoff, and I.~Veseli{\'c}, \emph{Groupoids, von {N}eumann
  algebras and the integrated density of states}, Math. Phys. Anal. Geom.
  \textbf{10} (2007), no.~1, 1--41. \MR{2340531}

\bibitem[LPS16]{LePoSc16}
D.~Lenz, F.~Pogorzelski, and M.~Schmidt, \emph{{T}he {I}hara {Z}eta function
  for infinite graphs}, 2016, arXiv:1408.3522.

\bibitem[LR07]{LeRi07}
D.~Lenz and C.~Richard, \emph{Pure point diffraction and cut and project
  schemes for measures: the smooth case}, Math. Z. \textbf{256} (2007), no.~2,
  347--378. \MR{2289878}

\bibitem[LSV11]{LSV11}
D.~Lenz, F.~Schwarzenberger, and I.~Veseli\'c, \emph{A {B}anach space-valued
  ergodic theorem and the uniform approximation of the integrated density of
  states}, Geom. Dedicata \textbf{150} (2011), 1--34, Erratum in DOI:
  10.1007/s10711-011-9657-1. \MR{2753695}

\bibitem[LS03a]{LeSt03-Algebras}
D.~Lenz and P.~Stollmann, \emph{Algebras of random operators associated to
  {D}elone dynamical systems}, Math. Phys. Anal. Geom. \textbf{6} (2003),
  no.~3, 269--290. \MR{1997916 (2004j:46092)}

\bibitem[LS03b]{LeSt03-Delone}
\bysame, \emph{Delone dynamical systems and associated random operators},
  Operator algebras and mathematical physics ({C}onstan\c ta, 2001), Theta,
  Bucharest, 2003, pp.~267--285. \MR{2018235}

\bibitem[LS05]{LeSt05}
\bysame, \emph{An ergodic theorem for {D}elone dynamical systems and existence
  of the integrated density of states}, J. Anal. Math. \textbf{97} (2005),
  1--24. \MR{2274971}

\bibitem[LS06]{LeSt06}
\bysame, \emph{Generic sets in spaces of measures and generic singular
  continuous spectrum for {D}elone {H}amiltonians}, Duke Math. J. \textbf{131}
  (2006), no.~2, 203--217. \MR{2219240}

\bibitem[LS09]{LeStr09}
D.~Lenz and N.~Strungaru, \emph{Pure point spectrum for measure dynamical
  systems on locally compact abelian groups}, J. Math. Pures Appl. (9)
  \textbf{92} (2009), no.~4, 323--341. \MR{2569181}

\bibitem[LV09]{LV09}
D.~Lenz and I.~Veseli{\'c}, \emph{Hamiltonians on discrete structures: jumps of
  the integrated density of states and uniform convergence}, Math. Z.
  \textbf{263} (2009), no.~4, 813--835.

\bibitem[LS84]{LeSt84}
D.~Levine and Paul~J. Steinhardt, \emph{Quasicrystals: A new class of ordered
  structures}, Phys. Rev. Lett. \textbf{53} (1984), 2477--2480.

\bibitem[Mah46]{Mah46}
K.~Mahler, \emph{On lattice points in {$n$}-dimensional star bodies. {I}.
  {E}xistence theorems}, Proc. Roy. Soc. London. Ser. A. \textbf{187} (1946),
  151--187. \MR{0017753}

\bibitem[Mey69]{Mey69}
Y.~Meyer, \emph{\`{A} propos de la formule de {P}oisson}, Fac. Sci., Univ.
  Paris, Orsay, 1969.

\bibitem[Mey70]{Mey70}
\bysame, \emph{Nombres de {P}isot, nombres de {S}alem et analyse harmonique},
  Lecture Notes in Mathematics, Vol. 117, Springer-Verlag, Berlin-New York,
  1970, Cours Peccot donn{\'e} au Coll{\`e}ge de France en avril-mai 1969.

\bibitem[Mey72]{Mey72}
\bysame, \emph{Algebraic numbers and harmonic analysis}, North-Holland
  Publishing Co., Amsterdam-London; American Elsevier Publishing Co., Inc., New
  York, 1972, North-Holland Mathematical Library, Vol. 2. \MR{0485769}

\bibitem[Moo97]{Moo97}
R.~V. Moody, \emph{Meyer sets and their duals}, The mathematics of long-range
  aperiodic order ({W}aterloo, {ON}, 1995), NATO Adv. Sci. Inst. Ser. C Math.
  Phys. Sci., vol. 489, Kluwer Acad. Publ., Dordrecht, 1997, pp.~403--441.
  \MR{1460032}

\bibitem[Moo02]{Moo02}
\bysame, \emph{Uniform distribution in model sets}, Canad. Math. Bull.
  \textbf{45} (2002), no.~1, 123--130. \MR{1884143}

\bibitem[Mum71]{Mum71}
D.~Mumford, \emph{A remark on {M}ahler's compactness theorem}, Proc. Amer.
  Math. Soc. \textbf{28} (1971), 289--294. \MR{0276410}

\bibitem[Ole73]{Ole73}
N.~Oler, \emph{Spaces of closed subgroups of a connected {L}ie group}, Glasgow
  Math. J. \textbf{14} (1973), 77--79. \MR{0323952}

\bibitem[OK85]{OsKi85}
S.~Ostlund and S.-H. Kim, \emph{Renormalization of quasiperiodic mappings},
  Phys. Scripta (1985), no.~Vol. T9, 193--198, Physics of chaos and related
  problems (Gr{\"a}ft{\aa}vallen, 1984). \MR{863590}

\bibitem[Pas71]{Pas71}
L.~A. Pastur, \emph{Selfaverageability of the number of states of the
  {S}chr\"odinger equation with a random potential}, Mat. Fiz. i Funkcional.
  Anal. (1971), no.~Vyp. 2, 111--116, 238.

\bibitem[Pog14]{Pog14}
F.~Pogorzelski, \emph{Convergence theorems for graph sequences}, Internat. J.
  Algebra Comput. \textbf{24} (2014), no.~8, 1233--1251. \MR{3296365}

\bibitem[PS16]{PS16}
F.~Pogorzelski and F.~Schwarzenberger, \emph{A {B}anach space-valued ergodic
  theorem for amenable groups and applications}, J. Anal. Math. \textbf{130}
  (2016), 19--69. \MR{3574647}

\bibitem[Que87]{Queffelec87}
M.~Queff{\'e}lec, \emph{Substitution dynamical systems---spectral analysis},
  Lecture Notes in Mathematics, vol. 1294, Springer-Verlag, Berlin, 1987.
  \MR{924156 (89g:54094)}

\bibitem[Rag72]{Rag72}
M.~S. Raghunathan, \emph{Discrete subgroups of {L}ie groups}, Springer-Verlag,
  New York-Heidelberg, 1972, Ergebnisse der Mathematik und ihrer Grenzgebiete,
  Band 68. \MR{0507234}

\bibitem[RS15]{RiSt15}
C.~Richard and N.~Strungaru, \emph{{P}ure point diffraction and {P}oisson
  summation}, 2015, arXiv::1512.00912.

\bibitem[RS17]{RiSt17}
\bysame, \emph{{A} short guide to pure point diffraction in cut-and-project
  sets}, J. Phys. A \textbf{50} (2017), no.~15, 154003, 25. \MR{3628773}

\bibitem[Sch00]{Sch00}
M.~Schlottmann, \emph{Generalized model sets and dynamical systems}, Directions
  in mathematical quasicrystals, CRM Monogr. Ser., vol.~13, Amer. Math. Soc.,
  Providence, RI, 2000, pp.~143--159. \MR{1798991}

\bibitem[SS15]{ScSc15}
C.~Schumacher and F.~Schwarzenberger, \emph{Approximation of the integrated
  density of states on sofic groups}, Ann. Henri Poincar\'e \textbf{16} (2015),
  no.~4, 1067--1101. \MR{3317792}

\bibitem[SBGC84]{SBGC84}
D.~Shechtman, I.~Blech, D.~Gratias, and J.~W. Cahn, \emph{Metallic phase with
  long-range orientational order and no translational symmetry}, Phys. Rev.
  Lett. \textbf{53} (1984), 1951--1953.

\bibitem[{Sh}u79]{Shu79}
M.A. {Sh}ubin, \emph{Spectral theory and the index of elliptic operators with
  almost-periodic coefficients}, Russ. Math. Surveys \textbf{34} (1979), no.~2,
  109--157.

\bibitem[Str14]{Str14}
N.~Strungaru, \emph{On weighted {D}irac combs supported inside model sets}, J.
  Phys. A \textbf{47} (2014), no.~33, 335202, 19. \MR{3247845}

\bibitem[S{\"u}t87]{Sut87}
A.~S{\"u}t{\H{o}}, \emph{The spectrum of a quasiperiodic {S}chr\"odinger
  operator}, Comm. Math. Phys. \textbf{111} (1987), no.~3, 409--415. \MR{900502
  (88m:81032)}

\bibitem[Vie22]{Vie22}
L.~Vietoris, \emph{Bereiche zweiter {O}rdnung}, Monatsh. Math. Phys.
  \textbf{32} (1922), no.~1, 258--280. \MR{1549179}

\bibitem[vQ01]{Querenburg2001}
B.~von Querenburg, \emph{Mengentheoretische {T}opologie}, third ed.,
  Springer-Verlag, Berlin, New York u.a., 2001, Hochschultext. [University
  Text].

\end{thebibliography}
\end{document}